\newtheorem{thm}{Theorem}[section]
\newtheorem{lem}[thm]{Lemma}
\newtheorem{prop}[thm]{Proposition}
\newcommand{\R}{\mathbb{R}}
\newcommand{\E}{\mathbb{E}}
\newcommand{\J}{\mathbb{J}}      
\newcommand{\K}{\mathbb{K}}      
\newcommand{\h}{\mathbb{H}}            
\newcommand{\s}{\mathbb{S}}      
\newcommand{\T}{\mathbb{T}}
\newcommand{\LL}{\mathcal{L}}
\begin{document}

\title[Decay Rates of Solutions to the Thermoelastic Bresse System]{Decay Rates of the Solutions to the Thermoelastic Bresse System of Types I and III}

\author[Gallego]{F. A. Gallego}
\address{Institute of Mathematics\\
Federal University of Rio de Janeiro, UFRJ \\
P.O. Box 68530, CEP 21945-970, Rio de Janeiro, RJ, Brazil.}
\email{fgallego@ufrj.br, ferangares@gmail.com}

\author[Mu\~{n}oz Rivera]{J. E. Mu\~{n}oz Rivera}
\address{Laborat\'orio de Computuç\~ao Cient\'ifica, LNCC \\ 
Petr\'opolis, RJ, Brazil \\
Federal University of Rio de Janeiro, UFRJ \\
P.O. Box 68530, CEP 21945-970, Rio de Janeiro, RJ, Brazil.}
\email{rivera@lncc.br, rivera@im.ufrj.br}

\date{}

\subjclass[2010]{Primary: 35B35, 35L55 Secondary: 93D20}
\keywords{ Decay rate, heat conduction, Bresse System, thermoelasticity}

\begin{abstract}
In this paper, we study the energy decay for the thermoelastic Bresse system in the whole line with two different dissipative mechanism, given by heat conduction (Types I and III). We prove that the decay rate of the solutions are very slow. More precisely, we show that the solutions decay with the rate of $(1+t)^{-\frac{1}{8}}$ in the $L^2$-norm, whenever the initial data belongs to $L^1(\R) \cap H^{s}(\R)$ for a suitable $s$. The wave speeds of propagation have influence on the decay rate with respect to the regularity of the initial data. This phenomenon is known as \textit{regularity-loss}. The main tool used to prove our results is the energy method in the Fourier space. 
\end{abstract}

\maketitle

\nocite{*}
\section{Introduction}

In this paper, we consider two Cauchy problems related to the Bresse model with two different dissipative mechanisms, corresponding to the heat conduction coupled to the system. The first of them  is the Bresse system with thermoelasticity of Type I:

\begin{align}\label{ee1}
\left\lbrace\begin{array}{r l}
\rho_1\varphi _{tt}-k \left ( \varphi_x - \psi -l\omega\right )_x -k_0l\left ( \omega_x - l\varphi \right )+l\gamma \theta_1 = 0 &  \text{in $\R \times (0,\infty)$},  \\
\rho_2\psi_{tt}-b\psi_{xx}-k\left ( \varphi_x - \psi -l\omega\right )+\gamma \theta_{2x} =0   &\text{in $\R \times (0,\infty)$},  \\
\rho_1\omega_{tt}-k_0\left (\omega_x - l \varphi \right )_x- kl\left ( \varphi_x - \psi -l\omega\right )+\gamma \theta_{1x} =0 & \text{in $\R \times (0,\infty)$},\\
\theta_{1t} -k_1\theta_{1xx}+m_1\left (\omega_x - l \varphi \right )_t=0 &\text{in $\R \times (0,\infty)$}, \\
\theta_{2t} -k_2\theta_{2xx}+m_2\psi_{xt}=0  & \text{in $\R \times (0,\infty)$}, 
\end{array}\right.
\end{align}

\noindent with the initial data
\begin{equation*}
(\varphi, \varphi_t, \psi, \psi_t,\omega, \omega_t,\theta_1,\theta_2)(x,0)=(\varphi_0, \varphi_1, \psi_0, \psi_1,\omega_0, \omega_1,\theta_{10},\theta_{20})(x).
\end{equation*}
The second one, is the Bresse system with thermoelasticity of Type III:

\begin{align}\label{ee2}
\left\lbrace \begin{array}{r l}
\rho_1\varphi _{tt}-k \left ( \varphi_x - \psi -l\omega\right )_x -k_0l\left ( \omega_x - l\varphi \right )+l\gamma \theta_{1t} = 0 & \text{in $\R \times (0,\infty)$},  \\
\rho_2\psi_{tt}-b\psi_{xx}-k\left ( \varphi_x - \psi -l\omega\right )+\gamma \theta_{2xt} =0 & \text{in $\R \times (0,\infty)$},  \\
\rho_1\omega_{tt}-k_0\left (\omega_x - l \varphi \right )_x- kl\left ( \varphi_x - \psi -l\omega\right )+\gamma \theta_{1xt} =0 & \text{in $\R \times (0,\infty)$},\\
\theta_{1tt} -k_1\theta_{1xx}-\alpha_1\theta_{1xxt}+m_1\left (\omega_x - l \varphi \right )_t=0  &  \text{in $\R \times (0,\infty)$}, \\
\theta_{2tt} -k_2\theta_{2xx}-\alpha_2\theta_{2xxt}+m_2\psi_{xt}=0   &  \text{in $\R \times (0,\infty)$}, 
\end{array}\right.
\end{align}

\noindent with the initial data
\begin{equation*}
(\varphi, \varphi_t, \psi, \psi_t,\omega, \omega_t,\theta_1,\theta_2, \theta_{1t},\theta_{2t})(x,0)=(\varphi_0, \varphi_1, \psi_0, \psi_1,\omega_0, \omega_1,\theta_{10},\theta_{20},\theta_{11},\theta_{21})(x),
\end{equation*}
where $\alpha_1$, $\alpha_2$, $\rho_1,\rho_2, \gamma, b, k, k_0,k_1,k_2,l$ $m_1$ and $m_2$ are positive constants. 
\vglue 0.2cm

The terms $k_0(\omega_x-l\varphi)$, $k(\varphi-\psi-l\omega)$ and $b\psi_x$  denote the axial force, the shear force and the bending moment, where $\omega$, $\varphi$  and $\psi$ are the longitudinal, vertical and shear angle displacements, respectively. Furthermore, $\rho_1 = \rho A$, $\rho_2 = \rho I$, $k_0 = EA$, $k = k'GA$, $b = EI$ and $l = R^{-1}$, where $\rho$ denotes the density, $E$ is the elastic modulus, $G$ is the shear modulus, $k'$ is the shear factor, $A$ is the cross-sectional area, $I$ is the second moment of area of the cross-section and $R$ is the radius of curvature of the beam. Here, we assume that
all the above coefficients are positive constants.  In what concerns of the Thermoelastic of type III, we refer the work of Green and Naghdi  \cite{green1991re, green1992undamped}. They re-examined the classical model of thermoelasticity and introduced the so-called model of thermoelasticity of type III, which the constitutive assumption on the heat flux vector is different from Fourier's law. They  developed a model of thermoelasticity that includes temperature gradient and thermal displacement gradient among the constitutive variables and proposed a heat conduction law as
\begin{equation}\label{t1}
q(x,t)=-(\kappa  \theta_x (x,t)+\kappa^* v_x(x,t)),
\end{equation}
where $v_t = \theta$ and $v$ is the thermal displacement gradient, $\kappa$ and $\kappa^*$ are constants.
Combining (\ref{t1}) with the energy balance law
\begin{equation}\label{t2}
\rho\theta_{t} + \varrho  \, div\, q = 0, 
\end{equation}
lead to the equation
\begin{equation*}
\rho \theta_{tt} - \varrho \kappa  \theta_{xx} -\varrho \kappa^*  \theta_{xx} = 0,
\end{equation*}
which permits propagation of thermal waves at finite speed. The common feature of these theories, is that all of them lead to hyperbolic differential equations and model heat flow as thermal waves traveling at finite speed. More information about mathematical modeling can be found in \cite{chandrasekharaiah1998hyperbolic, green1992undamped, lagnese1993}.
\vglue 0.2cm

The main purpose of this paper is to investigate the asymptotic behavior of the solutions to the Cauchy problems  \eqref{ee1} and \eqref{ee2} posed on $\R$. To the best of our knowledge, the stability of the Bresse model does not have any phisycal explanation when it is considered in the real line. Be that  as it may, from  mathematical point of view, a considerable number of stability issues concerning the Bresse model in a whole space, have received considerable attention in the last years \cite{ghoul2016,  said2015asymptotic, 2015bressetypeiii, said2014bresse, soufyane2014effect}. This has been due to \textit{the regularity-loss phenomenon} that usually appears in the pure Cauchy problems (for instance, see \cite{djou2014, duan2001, haramotokawashima2008, hosono2006, idekawashima2008, ueda2011} and references therein). Roughly speaking, the decay rate of the solution is of the \textit{regularity-loss type}, when it is obtained only by assuming some additional order regularity on the initial date. Thus,  based on this refinement  of the initial data, we investigate the relationship between damping terms, the wave speeds of propagation and their influence on the decay rate of the vector solutions $V_1$ and $V_2$ (see \eqref{vectorsolution1}-\eqref{vectorsolution2} below) of the systems \eqref{ee1} and \eqref{ee2}, respectively. 
\vglue 0.3cm

Thus, our main result reads as follows:
\begin{thm}\label{teo2}
Let $s$ be a nonnegative integer, suppose that $V^0_j \in H^s(\R)\cap L^1(\R)$ for $j=1,2$. Then, the vector solutions $V_j$ of thermoelastic Bresse problems  $(\ref{ee1})$ and $(\ref{ee2})$, respectively, satisfy the following decay estimates,
\begin{enumerate}
\item If $\frac{\rho_1}{\rho_2} =\frac{k}{b}$ and $k=k_0$, then
\begin{align}\label{e32}
\|\partial^k_xV_j(t)\|_2\leq C_1(1+t)^{-\frac{1}{8}-\frac{k}{4}}\|V_j^0\|_1 + C_2(1+t)^{-\frac{l}{4}}\|\partial_x^{k+l}V_j^0\|_2,  \qquad j=1,2, \quad t\geq 0.
\end{align}
\item If $\frac{\rho_1}{\rho_2} \neq\frac{k}{b}$ or $k\neq k_0$, then
\begin{align}\label{e32'}
\|\partial^k_xV_j(t)\|_2\leq C_1(1+t)^{-\frac{1}{8}-\frac{k}{4}}\|V_j^0\|_1 + C_2(1+t)^{-\frac{l}{6}}\|\partial_x^{k+l}V_j^0\|_2, \qquad j=1,2, , \quad t\geq 0.
\end{align}
\end{enumerate}
where $k+l \leq s$, $C_1,C_2$ are two positive constants.
\end{thm}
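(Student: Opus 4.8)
The plan is to carry out the energy method in Fourier space and to deduce from it a pointwise‑in‑frequency decay estimate, which is then integrated against $|\xi|^{2k}$. Applying the Fourier transform in $x$ to \eqref{ee1} (resp. \eqref{ee2}), the vector solution $\hat V_j(\xi,t)$ satisfies, for each fixed frequency $\xi\in\R$, a linear system of ordinary differential equations $\frac{d}{dt}\hat V_j=A_j(i\xi)\hat V_j$. First I would introduce the natural energy density $E_j(\xi,t)$ of this ODE system, a positive definite Hermitian form equivalent to $|\hat V_j(\xi,t)|^2$, and differentiate it in $t$. Since the only dissipative mechanism is heat conduction, the resulting identity produces a dissipation term involving only the thermal components: for Type~I one obtains $\frac{d}{dt}E_j\le -c\,\xi^2\big(|\hat\theta_1|^2+|\hat\theta_2|^2\big)$, and for Type~III a term controlling $\xi^2\big(|\hat\theta_{1,t}|^2+|\hat\theta_{2,t}|^2\big)$ together with lower order thermal quantities. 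This is far too weak to close an estimate, so the core of the proof is to manufacture the missing dissipation.

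The next step is to build a chain of auxiliary functionals, each obtained by pairing suitable components of $\hat V_j$ (the multiplier method, performed in the Fourier variable): pairing the $\theta_1$–equation against $\omega_x-l\varphi$ to recover dissipation for $(\omega_x-l\varphi)_t$; pairing the $\theta_2$–equation against $\psi_x$ to recover dissipation for $\psi_t$; and then propagating these through the three elastic equations to obtain, in turn, control of $\varphi_t$, $\psi_x$, $\varphi_x-\psi-l\omega$, the curvature $\omega_x-l\varphi$, and finally of the coupling terms carrying the factor $l$. Each functional $F_i$ would obey a differential inequality of the form $\frac{d}{dt}F_i\le -c_iD_i+C_i(\text{already controlled terms})+(\text{small frequency weight})(\text{remaining terms})$, where $D_i$ is a new dissipative quantity. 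The delicate point is that the zeroth order curvature coupling (the $l$–terms) forces several of these weights to carry a factor $\xi^2/(1+\xi^2)$, which is exactly what produces the very weak low–frequency dissipation $\sim\xi^4$ and the regularity–loss at high frequency; the wave speeds enter precisely here, since when $\rho_1/\rho_2=k/b$ and $k=k_0$ certain resonant cross terms cancel and only two derivatives are lost at high frequency, whereas if either condition fails an extra derivative is lost.

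I would then assemble the Lyapunov functional $\mathcal L_j(\xi,t)=NE_j(\xi,t)+\sum_i N_i\,w_i(\xi)F_i(\xi,t)$, with weights $w_i(\xi)\le1$ and constants $N\gg N_1\gg\cdots$ chosen so that $\mathcal L_j$ is equivalent to $E_j$ and all error terms are absorbed, to reach
\begin{equation*}
\frac{d}{dt}\mathcal L_j(\xi,t)+c\,\rho_j(\xi)\,\mathcal L_j(\xi,t)\le 0,\qquad
\rho_1(\xi)=\frac{\xi^4}{(1+\xi^2)^4},\qquad \rho_2(\xi)=\frac{\xi^4}{(1+\xi^2)^5},
\end{equation*}
where $\rho_1$ is used under hypothesis~(1) and $\rho_2$ under hypothesis~(2). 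Gronwall's inequality together with the equivalences $\mathcal L_j\simeq E_j\simeq|\hat V_j|^2$ then gives the pointwise frequency estimate $|\hat V_j(\xi,t)|^2\le C\,e^{-c\rho_j(\xi)t}\,|\hat V_j^0(\xi)|^2$.

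Finally I would integrate this bound against $|\xi|^{2k}$ over $\R$ and split into $|\xi|\le1$ and $|\xi|\ge1$. On the low–frequency part $\rho_j(\xi)\ge c\,\xi^4$ and $|\hat V_j^0(\xi)|\le\|V_j^0\|_1$, so $\int_{|\xi|\le1}|\xi|^{2k}e^{-c\xi^4t}\,d\xi$ contributes $\le C(1+t)^{-\frac{2k+1}{4}}\|V_j^0\|_1^2$. On the high–frequency part $\rho_j(\xi)\ge c\,|\xi|^{-\beta}$ with $\beta=4$ in case~(1) and $\beta=6$ in case~(2), so writing $|\xi|^{2k}=|\xi|^{-2l}\,|\xi|^{2(k+l)}$ and using $\sup_{|\xi|\ge1}|\xi|^{-2l}e^{-c|\xi|^{-\beta}t}\le C(1+t)^{-2l/\beta}$, this part contributes $\le C(1+t)^{-2l/\beta}\|\partial_x^{k+l}V_j^0\|_2^2$. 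Taking square roots and adding the two contributions yields $(1+t)^{-\frac{1}{8}-\frac{k}{4}}\|V_j^0\|_1+(1+t)^{-\frac{l}{4}}\|\partial_x^{k+l}V_j^0\|_2$ in case~(1), and the analogue with exponent $-\frac{l}{6}$ in case~(2), which are exactly \eqref{e32} and \eqref{e32'}. The main obstacle is the second step: designing the auxiliary functionals so as to tame the curvature coupling and to expose precisely how the equal–wave–speed conditions improve the high–frequency weight (and hence pin down $\rho_j$); once $\rho_j$ is correctly identified, the remaining estimates are routine, if lengthy, bookkeeping of constants.
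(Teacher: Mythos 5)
Your proposal follows essentially the same route as the paper: the energy method in Fourier space with a hierarchy of multiplier functionals assembled into a weighted Lyapunov functional equivalent to the energy, leading to the pointwise bound $|\hat V_j(\xi,t)|^2\le Ce^{-c\rho_j(\xi)t}|\hat V_j^0(\xi)|^2$ with $\rho_1,\rho_2$ comparable to the paper's $s_1,s_2$ (same $\xi^4$ behaviour at low frequency and $\xi^{-4}$, resp. $\xi^{-6}$, at high frequency), followed by the same Plancherel splitting into $|\xi|\le 1$ and $|\xi|\ge 1$. The final integration step is carried out correctly and reproduces the exponents $-\tfrac18-\tfrac k4$, $-\tfrac l4$ and $-\tfrac l6$ of \eqref{e32} and \eqref{e32'}.
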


Our proof is based on some estimates for the Fourier image of the solution  as well as a suitable linear combination of series of energy estimates. The key idea is to construct functionals to capture the dissipation of all the components of the vector solution. These functional allows to build an appropriate Lyapunov functionals equivalent to the energy, which gives the dissipation of all the components in the vector $\hat{V}^0_1(\xi,t)$ and $\hat{V}^0_2(\xi,t)$ (See \eqref{eq45} below). Finally, we rely on the Plancherel theorem and some asymptotic inequalities to show the desired decay estimates. 

The decay rate $(1 + t)^{-\frac{1}{8}}$  can be obtained only under the regularity $V_0 \in  H^s(\R)$. This regularity loss comes to analyze the Fourier image of the solution. Indeed, for $\hat{V}(\xi, t)$, we have (see \eqref{eq27'}, \eqref{eq27} and \eqref{eq45} below) that 
\begin{equation}
\left | \hat{V}(\xi, t)\right|^2 \leq C e^{-\beta s(\xi)t}\left | \hat{V}(\xi, 0)\right|^2,
\end{equation}
where
\begin{equation*}
s(\xi) = \left\lbrace\begin{tabular}{l l}
$C_1\dfrac{\xi^4}{\left(1+\xi^8\right)}$, & if $\frac{\rho_1}{\rho_2} =\frac{k}{b}$ and $k=k_0$, \\
$C_2\dfrac{\xi^4}{\left(1+\xi^2\right)\left(1+\xi^8\right)}$, & if $\frac{\rho_1}{\rho_2} \neq\frac{k}{b}$ or $k\neq k_0.$
\end{tabular}\right.
\end{equation*}
\vglue 0.2 cm
As we will see, the decay estimate \eqref{e32}-\eqref{e32'} depends in a critical way on the properties of the function $s(\xi)$. Obviously,
the function $s(\xi)$ behaves like $\xi^4$ in the low frequency region $(|\xi| \leq 1)$ and like $\xi^{-4}$ near infinity whenever $\frac{\rho_1}{\rho_2} =\frac{k}{b}$ and $k=k_0$. Otherwise, if the wave speeds of propagation are different the function $s(\xi)$ behaves also like $\xi^4$ in the low frequency region and like $\xi^{-6}$ near infinity, which means that the dissipation in the high frequency region is very weak and produces the regularity loss phenomenom. It has been known recently that this regularity loss leads to some difficulties in the nonlinear problems, see \cite{haramotokawashima2008, idekawashima2008} for more details.
\vglue 0.2cm


There are many works on the global existence and asymptotic stability of solutions to the initial boundary value problem for the Bresse system with dissipation. In this direction, we refer the IBVP associated to (\ref{ee1}) considered by Liu and Rao in \cite{liu2009energy}. They proved that the exponential decay  exists only when the velocities of the wave propagation are the same. If the wave speeds are different, they showed that the energy  decays polynomially to zero with the rate $t^{-\frac{1}{2}}$ and $t^{-\frac{1}{4}}$, provided that the boundary conditions are Dirichlet-Neumann-Neumann 
\[
\omega_x(x,t)=\varphi(x,t)=\psi_x(x,t)=\theta_1(x,t)=\theta_2(x,t)=0, \quad \text{for $x=0,l$},
\]
and Dirichlet-Dirichlet-Dirichlet type, 
\[
\omega(x,t)=\varphi(x,t)=\psi(x,t)=\theta_1(x,t)=\theta_2(x,t)=0, \quad \text{for $x=0,l$.}
\]
An improvement of the above results was made by Fatori and Mu\~{n}oz Rivera in \cite{fatori2010rates}. They showed that, in general, the Thermoelastic Bresse system of Type I is not exponentially stable, but there exists polynomial stability with rates that depend on the wave propagations and the regularity of the initial data. 

\vglue 0.2cm

As far as know, there exist few results related to the stability of the pure Cauchy problem to the Bresse model. The decay rate of the solution of the IVP for Bresse system in the whole line has been first studied by Said-Houari and Soufyane in \cite{soufyane2014effect}. They considered the system
\begin{equation}\label{eee1}
\left\lbrace\begin{tabular}{r l}
$\varphi _{tt}- \left ( \varphi_x - \psi -l\omega\right )_x -k_0^2l\left ( \omega_x - l\varphi \right ) = 0$  & in $\R \times (0,\infty)$,  \\
$\psi_{tt}-a^2\psi_{xx}-k\left ( \varphi_x - \psi -l\omega\right )\gamma_1 \psi_t =0$  & in $\R \times (0,\infty)$, \\
$\omega_{tt}-k_0^2\left (\omega_x - l \varphi \right )_x- l\left ( \varphi_x - \psi -l\omega\right )+\gamma_2 \omega_t =0$  & in $\R \times (0,\infty),$
\end{tabular}\right.
\end{equation}
and investigated the relationship between the frictional damping terms, the wave speeds of propagation and their influence on the decay rate of the solution. In addition, they showed that the $L^2$-norm of the solution decays with the rate $(1 + t)^{-1/4}$. Later on, the same authors in \cite{said2014bresse}, proved that the vector solution $V$ of the Bresse system damped by heat conduction:
\begin{equation}\label{eee2}
\left\lbrace\begin{tabular}{r l}
$\varphi _{tt}-k \left ( \varphi_x - \psi -l\omega\right )_x -k_0^2l\left ( \omega_x - l\varphi \right ) = 0$  & in $\R \times (0,\infty)$,  \\
$\psi_{tt}-a^2\psi_{xx}-k\left ( \varphi_x - \psi -l\omega\right )+m \theta_x =0$  & in $\R \times (0,\infty)$, \\
$\omega_{tt}-k_0^2\left (\omega_x - l \varphi \right )_x- l\left ( \varphi_x - \psi -l\omega\right )+\gamma \omega_t =0$  & in $\R \times (0,\infty)$, \\
$\theta_{t} -k_1\theta_{xx}+m\psi_{xt}=0$  & in $\R \times (0,\infty)$, 
\end{tabular}\right.
\end{equation}
decays with the rate,
\begin{equation}\label{eee3}
\|\partial_x^kV(t)\|_{L^2} \leq C(1+t)^{-\frac{1}{12}-\frac{k}{6}}\|V_0\|_{L^1}+C(1+t)^{-\frac{l}{2}}\|\partial_x^{k+l}V_0\|_{L^2},
\end{equation}
for $a=1$, and 
\begin{equation}\label{eee4}
\|\partial_x^kV(t)\|_{L^2} \leq C(1+t)^{-\frac{1}{12}-\frac{k}{6}}\|V_0\|_{L^1}+C(1+t)^{-\frac{l}{4}}\|\partial_x^{k+l}V_0\|_{L^2},
\end{equation}
for $a\neq 1$, $k=1,2,...,s-l$.
More recently, Said-Houari and Hamadouche  \cite{said2015asymptotic} studied the decay properties of the Bresse-Cattaneo system:
\begin{equation}\label{eee5}
\left\lbrace\begin{tabular}{r l}
$\varphi _{tt}-\left ( \varphi_x - \psi -l\omega\right )_x -k_0^2l\left ( \omega_x - l\varphi \right ) = 0$  & in $\R \times (0,\infty)$,  \\
$\psi_{tt}-a^2\psi_{xx}-\left ( \varphi_x - \psi -l\omega\right )+m \theta_x =0$  & in $\R \times (0,\infty)$, \\
$\omega_{tt}-k_0^2\left (\omega_x - l \varphi \right )_x- l\left ( \varphi_x - \psi -l\omega\right )+\gamma \omega_t =0$  & in $\R \times (0,\infty)$, \\
$\theta_{t} +q_x+m\psi_{xt}=0$  & in $\R \times (0,\infty)$, \\
$\tau_q q_t  +\beta q+\theta_{x}=0$  & in $\R \times (0,\infty)$, 
\end{tabular}\right.
\end{equation}
obtaining the same decay rate as the one of the solution for the Bresse-Fourier model (\ref{eee2}).  This fact has been also seen in the paper \cite{said2013damping}, where the authors investigated the Timoshenko-Cattaneo and Timoshenko-Fourier models and showed the same behavior for the solutions of both systems. Finally, concerning to the Termoelasticity type III (in one-dimensional space), Said-Houari and Hamadouche in \cite{2015bressetypeiii} have been recently analyzed the system:
\begin{equation*}
\left\lbrace\begin{tabular}{r l}
$\varphi _{tt}-\left ( \varphi_x - \psi -l\omega\right )_x -k_0^2l\left ( \omega_x - l\varphi \right ) = 0$  & in $\R \times (0,\infty)$,  \\
$\psi_{tt}-a^2\psi_{xx}-\left ( \varphi_x - \psi -l\omega\right )+m \theta_{tx} =0$  & in $\R \times (0,\infty)$, \\
$\omega_{tt}-k_0^2\left (\omega_x - l \varphi \right )_x- l\left ( \varphi_x - \psi -l\omega\right )+\gamma \omega_t =0$  & in $\R \times (0,\infty)$, \\
$\theta_{tt} -k_1\theta_{xx}+\beta\psi_{tx}-k_2\theta_{txx}=0$  & in $\R \times (0,\infty).$ 
\end{tabular}\right.
\end{equation*}
They proved that the solution decay with the rate:
\begin{equation*}
\|\partial_x^kV(t)\|_{L^2} \leq C(1+t)^{-\frac{1}{12}-\frac{k}{6}}\|V_0\|_{L^1}+C(1+t)^{-\frac{l}{2}}\|\partial_x^{k+l}V_0\|_{L^2},
\end{equation*}
for $a=1$, and 
\begin{equation*}
\|\partial_x^kV(t)\|_{L^2} \leq C(1+t)^{-\frac{1}{12}-\frac{k}{6}}\|V_0\|_{L^1}+C(1+t)^{-\frac{l}{8}}\|\partial_x^{k+l}V_0\|_{L^2},
\end{equation*}
for $a\neq 1$, $k=1,2,...,s-l$. 
\vglue 0.4cm

This paper is organized as follows:
\vglue 0.2cm

- In Section 2, we analyze the ODE system generated by the Fourier transform applies to the Cauchy problem, obtaining  appropriate decay estimates for the Fourier image of the solution. 
\vglue 0.2cm

- Section 3 is dedicated to proof our main result.

\section{Energy method in the Fourier space.}
 In this section, we establish  decay rates for the Fourier image of the solutions of thermoelastic Bresse systems.  To obtain the estimates of the Fourier image is actually the hardest and technical part. These estimates will play to a crucial role in proving the Theorems \ref{teo1} and \ref{teo1'}, below.

\subsection{Thermoelastic Bresse system of Type I}  
Taking Fourier Transform in (\ref{ee1}), we obtain the following ODE system:

\begin{align}
\rho_1\hat{\varphi} _{tt}-ik\xi \left ( i\xi \hat{\varphi} - \hat{\psi} -l \hat{\omega }  \right ) -k_0l\left ( i\xi \hat{\omega } - l \hat{\varphi} \right ) +l\gamma \hat{\theta}_1 &= 0 \quad \text{in $\R \times (0,\infty)$} \label{e1}\\
\rho_2\hat{\psi}_{tt}+b\xi ^{2}\hat{\psi }- k\left ( i\xi \hat{\varphi} - \hat{\psi} -l \hat{\omega }  \right )+i\gamma \xi  \hat {\theta}_2&=0 \quad \text{in $\R \times (0,\infty)$}\label{e2}\\
\rho_1\hat{\omega}_{tt}-ik_0\xi \left ( i\xi \hat{\omega } - l \hat{\varphi} \right )- kl\left ( i\xi \hat{\varphi} - \hat{\psi} -l \hat{\omega }  \right )+ i\gamma \xi \hat{\theta}_1&=0 \quad \text{in $\R \times (0,\infty)$}\label{e3}\\
\hat{\theta}_{1t} +k_1\xi ^{2}\hat{\theta}_1+m_1 \left ( i\xi \hat{\omega } - l \hat{\varphi} \right )_t&=0 \quad \text{in $\R \times (0,\infty)$}\label{e4} \\
\hat{\theta}_{2t} +k_2\xi ^{2}\hat{\theta}_2+im_2 \xi \hat{\psi}_{t} &=0 \quad \text{in $\R \times (0,\infty)$}\label{e5}
\end{align}
The energy functional associated to the above system is defined as:
\begin{equation}\label{energytypeII}
\hat{E}\left ( \xi,t  \right )=\rho_1|\hat{\varphi}_{t} |^{2}+\rho_2|\hat{\psi}_{t}|^{2}+\rho_1 |\hat{\omega}_{t}|^{2}+\frac{\gamma}{m_1}|\hat{\theta}_1 |^{2}+\frac{\gamma}{m_2}|\hat{\theta}_2 |^{2}+b| \xi|^{2}|\hat{\psi}|^{2}+k|i\xi \hat{\varphi} - \hat{\psi} -l \hat{\omega }|^{2} + k_0|i\xi \hat{\omega } - l \hat{\varphi}|^{2}.
\end{equation}

\begin{lem}\label{lem1}
Consider the energy functional $\hat{E}$ associated to the system \eqref{e1}-\eqref{e5}. Then, 
\begin{gather}\label{e7}
\frac{d}{dt}\hat{E}(\xi,t)=-2\gamma\xi^2\left(\frac{k_1}{m_1}|\hat{\theta}_1|^2+\frac{k_2}{m_2}|\hat{\theta}_2|^2\right).
\end{gather}
\end{lem}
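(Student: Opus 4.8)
The plan is to compute $\frac{d}{dt}\hat E(\xi,t)$ directly by differentiating each term in \eqref{energytypeII} and substituting the equations \eqref{e1}--\eqref{e5}, organizing the computation so that all the conservative couplings cancel in pairs and only the genuinely dissipative terms (coming from the heat equations) survive. Since everything is done at a fixed frequency $\xi$, this is an ODE computation and the standard energy identity; the only care needed is bookkeeping of complex conjugates, since the Fourier variables are complex-valued.

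First I would treat the "mechanical" block. Multiply \eqref{e1} by $\overline{\hat\varphi_t}$, \eqref{e2} by $\overline{\hat\psi_t}$, \eqref{e3} by $\overline{\hat\omega_t}$, add, and take real parts. The terms $\rho_1\hat\varphi_{tt}\overline{\hat\varphi_t}$ etc.\ produce $\frac12\frac{d}{dt}(\rho_1|\hat\varphi_t|^2+\rho_2|\hat\psi_t|^2+\rho_1|\hat\omega_t|^2)$. The elastic terms $-ik\xi(i\xi\hat\varphi-\hat\psi-l\hat\omega)\overline{\hat\varphi_t}$, $b\xi^2\hat\psi\overline{\hat\psi_t}$, $-k(i\xi\hat\varphi-\hat\psi-l\hat\omega)\overline{\hat\psi_t}$, $-ik_0\xi(i\xi\hat\omega-l\hat\varphi)\overline{\hat\omega_t}$, $-kl(i\xi\hat\varphi-\hat\psi-l\hat\omega)\overline{\hat\omega_t}$, and $-k_0l(i\xi\hat\omega-l\hat\varphi)\overline{\hat\varphi_t}$ should reassemble, after taking real parts, into $\frac12\frac{d}{dt}\bigl(b|\xi|^2|\hat\psi|^2+k|i\xi\hat\varphi-\hat\psi-l\hat\omega|^2+k_0|i\xi\hat\omega-l\hat\varphi|^2\bigr)$; the key algebraic check is that $\operatorname{Re}\bigl[-ik\xi(i\xi\hat\varphi-\hat\psi-l\hat\omega)\overline{\hat\varphi_t} - k(i\xi\hat\varphi-\hat\psi-l\hat\omega)\overline{\hat\psi_t} - kl(i\xi\hat\varphi-\hat\psi-l\hat\omega)\overline{\hat\omega_t}\bigr] = -\frac{k}{2}\frac{d}{dt}|i\xi\hat\varphi-\hat\psi-l\hat\omega|^2$, using $\overline{(i\xi\hat\varphi-\hat\psi-l\hat\omega)_t} = -i\xi\overline{\hat\varphi_t}-\overline{\hat\psi_t}-l\overline{\hat\omega_t}$, and similarly for the $k_0$-block. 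What is left over from the mechanical block is exactly the thermal coupling: $\operatorname{Re}\bigl[l\gamma\hat\theta_1\overline{\hat\varphi_t}+i\gamma\xi\hat\theta_2\overline{\hat\psi_t}+i\gamma\xi\hat\theta_1\overline{\hat\omega_t}\bigr]$, i.e.\ $\gamma\operatorname{Re}\bigl[\hat\theta_1\overline{(i\xi\hat\omega-l\hat\varphi)_t}\bigr] + \gamma\operatorname{Re}\bigl[i\xi\hat\theta_2\overline{\hat\psi_t}\bigr]$ (with an overall sign to be tracked).

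Next I would handle the two heat equations. Multiply \eqref{e4} by $\frac{\gamma}{m_1}\overline{\hat\theta_1}$ and take real parts: this gives $\frac{\gamma}{2m_1}\frac{d}{dt}|\hat\theta_1|^2 + \frac{\gamma k_1}{m_1}\xi^2|\hat\theta_1|^2 + \gamma\operatorname{Re}\bigl[\overline{\hat\theta_1}(i\xi\hat\omega-l\hat\varphi)_t\bigr] = 0$; similarly \eqref{e5} times $\frac{\gamma}{m_2}\overline{\hat\theta_2}$ yields $\frac{\gamma}{2m_2}\frac{d}{dt}|\hat\theta_2|^2 + \frac{\gamma k_2}{m_2}\xi^2|\hat\theta_2|^2 + \gamma\operatorname{Re}\bigl[i\xi\overline{\hat\theta_2}\,\hat\psi_t\bigr] = 0$. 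Adding these to (twice) the mechanical identity, the cross terms $\gamma\operatorname{Re}[\hat\theta_1\overline{(i\xi\hat\omega-l\hat\varphi)_t}]$ and $\gamma\operatorname{Re}[\overline{\hat\theta_1}(i\xi\hat\omega-l\hat\varphi)_t]$ are complex conjugates, hence equal as real parts and cancel against each other once signs are matched, and likewise for the $\hat\theta_2$--$\hat\psi_t$ pair. Collecting what survives gives $\frac12\frac{d}{dt}\hat E = -\gamma\xi^2\bigl(\frac{k_1}{m_1}|\hat\theta_1|^2 + \frac{k_2}{m_2}|\hat\theta_2|^2\bigr)$, which is \eqref{e7} after multiplying by $2$.

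The main obstacle is purely one of signs and conjugation: one must be consistent about whether to multiply by $\overline{\hat\varphi_t}$ or $\hat\varphi_t$, and make sure the coupling term produced by the mechanical block appears with the opposite sign to the one produced by the thermal block so that they cancel rather than add. A convenient way to make this transparent is to verify the identity $\operatorname{Re}\bigl[l\gamma\hat\theta_1\overline{\hat\varphi_t} + i\gamma\xi\hat\theta_1\overline{\hat\omega_t}\bigr] = \gamma\operatorname{Re}\bigl[\hat\theta_1\,\overline{(i\xi\hat\omega - l\hat\varphi)_t}\bigr]$ once and for all (it is just $i\xi\hat\omega_t - l\hat\varphi_t = (i\xi\hat\omega - l\hat\varphi)_t$ conjugated), and similarly note $\operatorname{Re}[i\gamma\xi\hat\theta_2\overline{\hat\psi_t}] = \operatorname{Re}[\gamma\hat\theta_2\,\overline{(i\xi\hat\psi)_t}] = \operatorname{Re}[\gamma\hat\theta_2\,\overline{(\widehat{\psi_x})_t}]$; after that the cancellation with the heat-equation contributions is immediate. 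No positivity or equivalence of $\hat E$ is needed here — only the exact differential identity — so beyond careful algebra there is no analytic difficulty.
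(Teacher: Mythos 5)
Your proof is correct and is exactly the paper's argument (the paper states it in one line: multiply \eqref{e1}--\eqref{e3} by $\overline{\hat\varphi_t}$, $\overline{\hat\psi_t}$, $\overline{\hat\omega_t}$ and \eqref{e4}, \eqref{e5} by $\tfrac{\gamma}{m_1}\overline{\hat\theta_1}$, $\tfrac{\gamma}{m_2}\overline{\hat\theta_2}$, add, and take real parts), only written out in full detail. One small slip to fix: your displayed ``key algebraic check'' should produce $+\tfrac{k}{2}\tfrac{d}{dt}|i\xi\hat\varphi-\hat\psi-l\hat\omega|^2$ rather than $-\tfrac{k}{2}\tfrac{d}{dt}(\cdots)$ --- consistent with your own preceding sentence --- since the three elastic terms sum to $k\,(i\xi\hat\varphi-\hat\psi-l\hat\omega)\,\overline{(i\xi\hat\varphi-\hat\psi-l\hat\omega)_t}$.
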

\begin{proof}
Multiplying $(\ref{e1})$ by $\overline{\hat{\varphi}}_t$,  $(\ref{e2})$  by  $\overline{\hat{\psi}}_t$,  $(\ref{e3})$  by $\overline{\hat{\omega}}_t$,  $(\ref{e4})$  by $\frac{\gamma}{m_1}\overline{\hat{\theta}_1}$, and $(\ref{e5})$ by $\frac{\gamma}{m_2}\overline{\hat{\theta}_2}$, adding and  taking real part, $(\ref{e7})$ follows.

\end{proof}
We show that the decay rate of the solution will depend on the wave speeds of propagation. More precisely, we analyze two cases: First, we suppose that 
\[
\frac{\rho_1}{\rho_2}=\frac{k}{b} \quad \text{and} \quad k=k_0.
\]
Otherwise, we consider the case when the wave speeds of propagation are different ($\frac{\rho_1}{\rho_2}\neq\frac{k}{b}$ or $k\neq k_0$). The proof of our main results in this section (Theorems $\ref{teo1}$ and $\ref{teo1'}$ below) are based on the following lemmas:

\begin{lem}\label{lem2}
The functional
\begin{equation*}
J_1(\xi,t)=Re(i\rho_2\xi \hat{\psi}_t\overline{\hat{\theta}_2}),
\end{equation*}
satisfies
\begin{align}\label{e9}
\frac{d}{dt}J_1(\xi,t) + \frac{m_2\rho_2}{2}\xi^2|\hat{\psi}_t|^2 \leq b|\xi|^3|\hat{\psi}||\hat{\theta}_2| + k|\xi||\hat{\theta}_2| | i\xi \hat{\varphi} - \hat{\psi} -l \hat{\omega }|+  C_1(1+\xi^2)\xi^2|\hat{\theta}_2|^2,\end{align}
where $C_1$ is a positive constant.
\end{lem}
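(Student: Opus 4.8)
The plan is to obtain the identity for $\frac{d}{dt}J_1$ by differentiating $J_1(\xi,t)=\operatorname{Re}(i\rho_2\xi\hat\psi_t\overline{\hat\theta_2})$ and substituting the equations of motion, then to absorb the undesirable terms using Young's inequality. Concretely, differentiating gives
\begin{align*}
\frac{d}{dt}J_1(\xi,t)=\operatorname{Re}\bigl(i\xi(\rho_2\hat\psi_{tt})\overline{\hat\theta_2}\bigr)+\operatorname{Re}\bigl(i\rho_2\xi\hat\psi_t\overline{\hat\theta_{2t}}\bigr).
\end{align*}
For the first term I substitute $\rho_2\hat\psi_{tt}=-b\xi^2\hat\psi+k(i\xi\hat\varphi-\hat\psi-l\hat\omega)-i\gamma\xi\hat\theta_2$ from \eqref{e2}; for the second I substitute $\hat\theta_{2t}=-k_2\xi^2\hat\theta_2-im_2\xi\hat\psi_t$ from \eqref{e5}, taking the complex conjugate. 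The crucial observation is that the term coming from $-im_2\xi\hat\psi_t$ in $\overline{\hat\theta_{2t}}$ produces $\operatorname{Re}(i\rho_2\xi\hat\psi_t\cdot\overline{-im_2\xi\hat\psi_t})=-m_2\rho_2\xi^2|\hat\psi_t|^2$, which is exactly (twice) the good dissipative term we want to move to the left-hand side, and likewise the $-i\gamma\xi\hat\theta_2$ piece of $\rho_2\hat\psi_{tt}$ contributes $\operatorname{Re}(i\xi\cdot(-i\gamma\xi\hat\theta_2)\overline{\hat\theta_2})=\gamma\xi^2|\hat\theta_2|^2$, which is harmless (it has the right sign relative to the $C_1(1+\xi^2)\xi^2|\hat\theta_2|^2$ bound, or can simply be bounded by it).

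The remaining terms are: $\operatorname{Re}(i\xi(-b\xi^2\hat\psi)\overline{\hat\theta_2})$, which is bounded in modulus by $b|\xi|^3|\hat\psi||\hat\theta_2|$; $\operatorname{Re}(i\xi\,k(i\xi\hat\varphi-\hat\psi-l\hat\omega)\overline{\hat\theta_2})$, bounded by $k|\xi|\,|i\xi\hat\varphi-\hat\psi-l\hat\omega|\,|\hat\theta_2|$; and the term $\operatorname{Re}(i\rho_2\xi\hat\psi_t\cdot\overline{-k_2\xi^2\hat\theta_2})$, which equals $\operatorname{Re}(-ik_2\rho_2\xi^3\hat\psi_t\overline{\hat\theta_2})$ and is bounded by $k_2\rho_2|\xi|^3|\hat\psi_t||\hat\theta_2|$. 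The first two match the first two terms on the right-hand side of \eqref{e9} exactly. For the last one I apply Young's inequality in the form $k_2\rho_2|\xi|^3|\hat\psi_t||\hat\theta_2|\le\frac{m_2\rho_2}{2}\xi^2|\hat\psi_t|^2+\frac{(k_2\rho_2)^2}{2m_2\rho_2}\xi^4|\hat\theta_2|^2$: the first summand is absorbed by moving $\frac{m_2\rho_2}{2}\xi^2|\hat\psi_t|^2$ to the left (so that only $\frac{m_2\rho_2}{2}\xi^2|\hat\psi_t|^2$ remains there, the original $-m_2\rho_2\xi^2|\hat\psi_t|^2$ having lost half to this estimate), and the second summand, together with the $\gamma\xi^2|\hat\theta_2|^2$ term from before, is controlled by $C_1(1+\xi^2)\xi^2|\hat\theta_2|^2$ for $C_1$ chosen large enough (the $\xi^4$ contributes the $\xi^2\cdot\xi^2$ part, the $\xi^2$ contributes the rest).

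There is really no serious obstacle here; the estimate is a routine "multiplier" computation. The only point requiring a little care is bookkeeping the sign and coefficient of the $|\hat\psi_t|^2$ term: the equation \eqref{e5} must be used with conjugates correctly so that the $\hat\psi_t$-cross term is genuinely negative (dissipative) rather than imaginary, and one must keep half of it on the left to soak up the $k_2$-cross term via Young's inequality. I would also double-check that no additional cross term between $\hat\psi_t$ and $\hat\theta_2$ is secretly real after taking $\operatorname{Re}$; a quick inspection shows the $-k_2\xi^2\hat\theta_2$ piece paired against $i\rho_2\xi\hat\psi_t$ carries a factor $i$ and therefore is genuinely a "bad" term needing Young, exactly as handled above. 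Collecting everything yields \eqref{e9}.
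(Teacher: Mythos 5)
Your proof is correct and follows essentially the same route as the paper: the paper multiplies \eqref{e5} by $-i\rho_2\xi\overline{\hat{\psi}_t}$, takes real parts, and substitutes \eqref{e2}, which is exactly your product-rule differentiation of $J_1$ combined with the two equations of motion, and both arguments finish by spending half of the $m_2\rho_2\xi^2|\hat{\psi}_t|^2$ term on the $k_2\rho_2|\xi|^3|\hat{\psi}_t||\hat{\theta}_2|$ cross term via Young's inequality. Your sign bookkeeping for the dissipative term and the final choice of $C_1$ are both accurate.
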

\begin{proof}
Multiplying $(\ref{e5})$ by $-i\rho_2\xi \overline{\hat{\psi}_t}$ and taking real part, we obtain
\begin{align*}
\frac{d}{dt}Re\left(-i\rho_2\xi \overline{\hat{\psi}_t}\hat{\theta}_{2}\right)+Re\left(i\rho_2\xi \overline{\hat{\psi}_{tt}}\hat{\theta}_{2}\right) -Re\left(ik_2\rho_2\xi^3 \overline{\hat{\psi}_t}\hat{\theta}_2\right)+m_2\rho_2 \xi^2 |\hat{\psi}_{t}|^2 =0.
\end{align*}
By $(\ref{e2})$, we have
\begin{align*}
\frac{d Re}{dt}\left(-i\rho_2\xi \overline{\hat{\psi}_t}\hat{\theta}_{2}\right)&+m_2\rho_2 \xi^2 |\hat{\psi}_{t}|^2 \\
&\leq k_2\rho_2|\xi|^3 |\hat{\psi}_t||\hat{\theta}_2|+b|\xi|^3 |\hat{\psi}| |\hat{\theta}_{2}| + k|\xi||\hat{\theta}_{2}|| i\xi \hat{\varphi} - \hat{\psi} -l \hat{\omega } |+\gamma \xi^2 |\hat {\theta}_2|^2
\end{align*}
Applying Young inequality, we obtain $(\ref{e9})$.
\end{proof}


\begin{lem}\label{lem3'}
The functional
\begin{equation*}
T_1(\xi,t)=Re\left(-\rho_1\hat{\varphi}_t \overline{\left(i\xi \hat{\omega } - l \hat{\varphi}\right)} - \frac{\rho_1}{m_1}\hat{\varphi}_t\overline{\hat{\theta}_1}\right),
\end{equation*}
satisfies
\begin{align}\label{e10}
\frac{d}{dt}T_1(\xi,t) +\frac{k_0l}{2}|i\xi \hat{\omega } - l \hat{\varphi}|^2 \leq &\frac{\rho_1k_1}{m_1}|\xi|^2|\hat{\varphi}_t||\hat{\theta}_1|-Re(ik\xi(i\xi\hat{\varphi}-\hat{\psi}-l\hat{\omega})\overline{\left(i\xi \hat{\omega } - l \hat{\varphi}\right)}) \notag \\
&+\frac{k}{m_1}|\xi||\hat{\theta}_1||i\xi\hat{\varphi}-\hat{\psi}-l\hat{\omega}|+C_2 |\hat{\theta}_1|^2,
\end{align}
where $C_2$ is a positive constant.
\end{lem}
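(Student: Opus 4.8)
The plan is to derive the identity for $\dfrac{d}{dt}T_1(\xi,t)$ by differentiating the two pieces of $T_1$ separately and then substituting the ODEs \eqref{e1}, \eqref{e3} and \eqref{e4}. Concretely, I would first compute
\[
\frac{d}{dt}Re\left(-\rho_1\hat{\varphi}_t\overline{\left(i\xi\hat{\omega}-l\hat{\varphi}\right)}\right)
= Re\left(-\rho_1\hat{\varphi}_{tt}\overline{\left(i\xi\hat{\omega}-l\hat{\varphi}\right)}\right) + Re\left(-\rho_1\hat{\varphi}_t\overline{\left(i\xi\hat{\omega}_t-l\hat{\varphi}_t\right)}\right).
\]
In the first term I substitute $\rho_1\hat{\varphi}_{tt}$ from \eqref{e1}, i.e. $\rho_1\hat{\varphi}_{tt}=ik\xi(i\xi\hat{\varphi}-\hat{\psi}-l\hat{\omega})+k_0l(i\xi\hat{\omega}-l\hat{\varphi})-l\gamma\hat{\theta}_1$. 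The key point is that the $k_0l(i\xi\hat{\omega}-l\hat{\varphi})$ contribution produces exactly $-k_0l|i\xi\hat{\omega}-l\hat{\varphi}|^2$ (up to sign), which is the good dissipative term we want to extract; the $ik\xi(\cdot)$ contribution is the first term kept on the right-hand side of \eqref{e10}, and the $-l\gamma\hat{\theta}_1$ contribution gives a term controlled by $|\hat{\theta}_1||i\xi\hat{\omega}-l\hat{\varphi}|$.

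Next I handle the cross terms. The term $Re(-\rho_1\hat{\varphi}_t\overline{(i\xi\hat{\omega}_t-l\hat{\varphi}_t)}) = -Re(\rho_1\hat{\varphi}_t\,\overline{i\xi\hat{\omega}_t}) + \rho_1 l|\hat{\varphi}_t|^2$ looks problematic because of the stray $\rho_1 l|\hat{\varphi}_t|^2$ and the $\hat{\varphi}_t\overline{\hat{\omega}_t}$ term — this is precisely why the second piece $-\frac{\rho_1}{m_1}\hat{\varphi}_t\overline{\hat{\theta}_1}$ is included in $T_1$. Differentiating it gives $-\frac{\rho_1}{m_1}\hat{\varphi}_{tt}\overline{\hat{\theta}_1}-\frac{\rho_1}{m_1}\hat{\varphi}_t\overline{\hat{\theta}_{1t}}$, and using \eqref{e4} in the form $\overline{\hat{\theta}_{1t}} = -k_1\xi^2\overline{\hat{\theta}_1} - m_1\overline{(i\xi\hat{\omega}-l\hat{\varphi})_t} = -k_1\xi^2\overline{\hat{\theta}_1} - m_1\overline{(i\xi\hat{\omega}_t-l\hat{\varphi}_t)}$ makes the $-\rho_1\hat{\varphi}_t\overline{(i\xi\hat{\omega}_t-l\hat{\varphi}_t)}$ term cancel exactly with the $+\rho_1\hat{\varphi}_t\overline{(i\xi\hat{\omega}_t-l\hat{\varphi}_t)}$ coming from $m_1$. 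What remains is: the dissipative term $\frac{k_0l}{2}|i\xi\hat{\omega}-l\hat{\varphi}|^2$ after absorbing half of $k_0l|i\xi\hat{\omega}-l\hat{\varphi}|^2$; the term $-Re(ik\xi(i\xi\hat{\varphi}-\hat{\psi}-l\hat{\omega})\overline{(i\xi\hat{\omega}-l\hat{\varphi})})$; the term $\frac{\rho_1 k_1}{m_1}|\xi|^2|\hat{\varphi}_t||\hat{\theta}_1|$ coming from the $-k_1\xi^2\overline{\hat{\theta}_1}$ substitution; and the terms $\frac{l\gamma}{m_1}|\hat{\theta}_1||i\xi\hat{\omega}-l\hat{\varphi}|$ and $\frac{k_0l}{m_1}|\hat{\theta}_1||i\xi\hat{\omega}-l\hat{\varphi}|$ from $-\frac{\rho_1}{m_1}\hat{\varphi}_{tt}\overline{\hat{\theta}_1}$. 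A Young inequality on the latter two, absorbing the $|i\xi\hat{\omega}-l\hat{\varphi}|^2$ part into the good term (which is why the coefficient is $k_0l/2$ rather than $k_0l$) and bounding the rest by $C_2|\hat{\theta}_1|^2$, yields \eqref{e10}. Note also the term $\frac{k}{m_1}|\xi||\hat{\theta}_1||i\xi\hat{\varphi}-\hat{\psi}-l\hat{\omega}|$ arises from the $ik\xi(i\xi\hat{\varphi}-\hat{\psi}-l\hat{\omega})$ part of $\rho_1\hat{\varphi}_{tt}$ hitting $\overline{\hat{\theta}_1}$.

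I expect the main obstacle to be purely bookkeeping: making sure every $\hat{\varphi}_t\overline{\hat{\omega}_t}$ and $|\hat{\varphi}_t|^2$ term produced by the naive differentiation is exactly matched by the compensating terms built into the definition of $T_1$ via the $-\frac{\rho_1}{m_1}\hat{\varphi}_t\overline{\hat{\theta}_1}$ correction and the $m_1(i\xi\hat{\omega}-l\hat{\varphi})_t$ coupling in \eqref{e4}. This cancellation is the entire reason for the specific form of the functional, so verifying it carefully (including the real-part bookkeeping, since $Re(i z)$-type terms flip sign under conjugation) is the crux; once it is confirmed, the remaining estimates are routine applications of Young's inequality, choosing the split of $k_0l|i\xi\hat{\omega}-l\hat{\varphi}|^2$ so that exactly $k_0l/2$ survives on the left and the absorbed half dominates the bad $|i\xi\hat{\omega}-l\hat{\varphi}|^2$ contributions on the right.
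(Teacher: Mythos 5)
Your proposal is correct and follows essentially the same route as the paper: the paper organizes the computation as multiplying \eqref{e1} by $-\overline{\left(i\xi\hat{\omega}-l\hat{\varphi}\right)}$ and by $-\overline{\hat{\theta}}_1/m_1$ separately and adding, which is exactly your ``differentiate each piece of $T_1$, substitute \eqref{e4} to cancel the $\rho_1\hat{\varphi}_t\overline{\left(i\xi\hat{\omega}_t-l\hat{\varphi}_t\right)}$ cross term, then apply Young'' scheme. The only (harmless) slip is attributing the $l\gamma|\hat{\theta}_1||i\xi\hat{\omega}-l\hat{\varphi}|$ term to the $-\frac{\rho_1}{m_1}\hat{\varphi}_{tt}\overline{\hat{\theta}_1}$ piece with coefficient $l\gamma/m_1$, whereas it arises from the first piece with coefficient $l\gamma$ (the second piece instead contributes $\frac{l\gamma}{m_1}|\hat{\theta}_1|^2$); all such terms are absorbed identically by Young's inequality, so the conclusion is unaffected.
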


\begin{proof}
Multiplying (\ref{e1}) by $-\overline{\left(i\xi \hat{\omega } - l \hat{\varphi}\right)}$ and taking real part, we have

\begin{align*}
\frac{d}{dt} Re(-\rho_1\hat{\varphi} _{t}\overline{\left(i\xi \hat{\omega } - l \hat{\varphi}\right)})+Re(\rho_1\hat{\varphi} _{t}\overline{\left(i\xi \hat{\omega } - l \hat{\varphi}\right)}_t)&+Re(ik\xi \left ( i\xi \hat{\varphi} - \hat{\psi} -l \hat{\omega }  \right)\overline{\left(i\xi \hat{\omega } - l \hat{\varphi}\right)}) \notag\\
&+k_0l|i\xi \hat{\omega } - l \hat{\varphi}|^2 -Re(l\gamma \hat{\theta}_1\overline{\left(i\xi \hat{\omega } - l \hat{\varphi}\right)})= 0.
\end{align*}
(\ref{e4}) implies that
\begin{align}\label{e34}
\frac{d Re}{dt}(-\rho_1\hat{\varphi} _{t}\overline{\left(i\xi \hat{\omega } - l \hat{\varphi}\right)})&-\frac{\rho_1}{m_1}Re\left(\hat{\varphi}_t\overline{\hat{\theta}}_{1t}\right) -\frac{\rho_1k_1}{m_1}Re\left(\xi^2\hat{\varphi}_t\overline{\hat{\theta}}_{1}\right) \notag \\
&+Re(ik\xi \left ( i\xi \hat{\varphi} - \hat{\psi} -l \hat{\omega }  \right)\overline{\left(i\xi \hat{\omega } - l \hat{\varphi}\right)}) +k_0l|i\xi \hat{\omega } - l \hat{\varphi}|^2 -Re(l\gamma \hat{\theta}_1\overline{\left(i\xi \hat{\omega } - l \hat{\varphi}\right)})= 0.
\end{align}
On the other hand, multiplying (\ref{e1}) by $-\dfrac{\overline{\hat{\theta}}_1}{m_1}$ and taking real part, it follows that 
\begin{align}\label{e35}
\frac{d Re}{dt}\left(-\frac{\rho_1}{m_1}\hat{\varphi} _{t}\overline{\hat{\theta}}_1\right)+\frac{\rho_1}{m_1}Re\left(\hat{\varphi} _{t}\overline{\hat{\theta}}_{1t}\right)+Re\left( \frac{ik}{m_1}\xi \overline{\hat{\theta}}_1 \left ( i\xi \hat{\varphi} - \hat{\psi} -l \hat{\omega }  \right )\right) +Re\left(\frac{k_0l}{m_1}\overline{\hat{\theta}}_1\left ( i\xi \hat{\omega } - l \hat{\varphi} \right ) \right)&  \notag\\
-\frac{l\gamma}{m_1} |\hat{\theta}_1|^2 &= 0.
\end{align}
Adding (\ref{e34}) and (\ref{e35}), 
\begin{align*}
\frac{d}{dt}T_1(\xi,t)+k_0l|i\xi \hat{\omega } - l \hat{\varphi}|^2 \leq &\frac{\rho_1k_1}{m_1}|\xi|^2|\hat{\varphi}_t||\hat{\theta}_{1}|-Re\left( ik\xi \left ( i\xi \hat{\varphi} - \hat{\psi} -l \hat{\omega }  \right)\overline{\left(i\xi \hat{\omega } - l \hat{\varphi}\right)}\right)+l\gamma |\hat{\theta}_1||i\xi \hat{\omega } - l \hat{\varphi}| \notag \\
&+\frac{k}{m_1}|\xi| |\hat{\theta}_1||i\xi \hat{\varphi} - \hat{\psi} -l \hat{\omega }| +\frac{k_0l}{m_1}|\hat{\theta}_1|| i\xi \hat{\omega } - l \hat{\varphi}|+\frac{l\gamma}{m_1} |\hat{\theta}_1|^2, 
\end{align*}
applying Young inequality, (\ref{e10}) follows.
\end{proof}

\begin{lem}\label{lem3''}
The functional
\begin{equation*}
T_2(\xi,t)=Re\left(i\rho_1\xi\hat{\omega}_t \overline{\left(i\xi \hat{\omega } - l \hat{\varphi}\right)} + i\frac{\rho_1}{m_1}\xi\hat{\omega}_t\overline{\hat{\theta}_1}\right),
\end{equation*}
satisfies
\begin{align}\label{e36}
\frac{d}{dt}T_2(\xi,t) +\frac{k_0}{2}|\xi|^2|i\xi \hat{\omega } - l \hat{\varphi}|^2 \leq &\frac{\rho_1k_1}{m_1}|\xi|^3|\hat{\omega}_t||\hat{\theta}_1|+Re(ikl\xi(i\xi\hat{\varphi}-\hat{\psi}-l\hat{\omega})\overline{\left(i\xi \hat{\omega } - l \hat{\varphi}\right)}) \notag \\
&+\frac{kl}{m_1}|\xi||\hat{\theta}_1||i\xi\hat{\varphi}-\hat{\psi}-l\hat{\omega}|+C_3 |\xi|^2|\hat{\theta}_1|^2,
\end{align}
where $C_3$ is a positive constant.
\end{lem}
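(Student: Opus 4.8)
The plan is to mimic the proof of Lemma \ref{lem3'}, which produced the dissipation of $|i\xi\hat\omega - l\hat\varphi|^2$, but this time pairing the equation \eqref{e3} for $\hat\omega$ with the multiplier $\overline{(i\xi\hat\omega - l\hat\varphi)}$ (up to an $i\xi$ factor) in order to produce a gradient-type term $|\xi|^2|i\xi\hat\omega - l\hat\varphi|^2$. Concretely, I would first multiply \eqref{e3} by $-i\xi\,\overline{(i\xi\hat\omega - l\hat\varphi)}$ and take the real part. The principal elliptic term $-ik_0\xi(i\xi\hat\omega - l\hat\varphi)_x \rightsquigarrow k_0\xi^2|i\xi\hat\omega - l\hat\varphi|^2$ in Fourier variables gives the desired dissipative quantity $k_0|\xi|^2|i\xi\hat\omega - l\hat\varphi|^2$; the time-derivative term $\rho_1\hat\omega_{tt}$ becomes, after integration by parts in $t$, $\frac{d}{dt}\mathrm{Re}(i\rho_1\xi\hat\omega_t\overline{(i\xi\hat\omega - l\hat\varphi)})$ minus a term $\mathrm{Re}(i\rho_1\xi\hat\omega_t\overline{(i\xi\hat\omega - l\hat\varphi)}_t)$ that contains $\hat\varphi_t$ and $\hat\omega_t$; the coupling term $-kl(i\xi\hat\varphi - \hat\psi - l\hat\omega)$ produces the shear-force cross term $\mathrm{Re}(ikl\xi(i\xi\hat\varphi - \hat\psi - l\hat\omega)\overline{(i\xi\hat\omega - l\hat\varphi)})$; and the thermal term $i\gamma\xi\hat\theta_1$ produces a term controlled by $|\xi|^2|\hat\theta_1|^2$ plus a small multiple of the dissipation.

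The troublesome leftover is the term $-\mathrm{Re}(i\rho_1\xi\hat\omega_t\overline{(i\xi\hat\omega - l\hat\varphi)}_t)$, which contains $\overline{(i\xi\hat\omega_t - l\hat\varphi_t)}$ and thus a $\hat\omega_t$-squared contribution and a mixed $\hat\varphi_t$-term. This is exactly why the second summand $i\frac{\rho_1}{m_1}\xi\hat\omega_t\overline{\hat\theta_1}$ is included in the definition of $T_2$: differentiating it and using \eqref{e4} (i.e. $\hat\theta_{1t} = -k_1\xi^2\hat\theta_1 - m_1(i\xi\hat\omega - l\hat\varphi)_t$, equivalently $m_1(i\xi\hat\omega_t - l\hat\varphi_t) = -\hat\theta_{1t} - k_1\xi^2\hat\theta_1$) lets me substitute $(i\xi\hat\omega - l\hat\varphi)_t$ in terms of $\hat\theta_{1t}$ and $\hat\theta_1$, so that the unwanted $\mathrm{Re}(\hat\omega_t\,\overline{\hat\theta_{1t}})$ cross terms cancel, exactly as the two displays \eqref{e34}–\eqref{e35} combine in Lemma \ref{lem3'}. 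After this cancellation one is left with a term $\frac{\rho_1 k_1}{m_1}|\xi|^3|\hat\omega_t||\hat\theta_1|$ and lower-order pieces. The remaining subcritical terms $l\gamma|\xi||\hat\theta_1||i\xi\hat\omega - l\hat\varphi|$, the $k_0l/m_1$-type term, and $|\xi|^2|\hat\theta_1|^2$ are absorbed: I split off $\frac{k_0}{2}|\xi|^2|i\xi\hat\omega - l\hat\varphi|^2$ as the genuine dissipation and send the rest into $C_3|\xi|^2|\hat\theta_1|^2$ via Young's inequality (with $\xi$-weights chosen to make everything homogeneous of the right order in $\xi$).

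Thus the ordered steps are: (1) multiply \eqref{e3} by $-i\xi\overline{(i\xi\hat\omega - l\hat\varphi)}$, take real parts, integrate the $\rho_1\hat\omega_{tt}$ term by parts in $t$ to get $\frac{d}{dt}\mathrm{Re}(i\rho_1\xi\hat\omega_t\overline{(i\xi\hat\omega - l\hat\varphi)})$ plus the obstructive $\hat\omega_t$-derivative term; (2) multiply \eqref{e3} by $i\frac{\xi}{m_1}\overline{\hat\theta_1}$, take real parts, and use \eqref{e4} to express the time derivative of $\hat\theta_1$; (3) add the two identities so the $\mathrm{Re}(\hat\omega_t\overline{\hat\theta_{1t}})$ terms cancel and the coefficient $k_0$ of $|\xi|^2|i\xi\hat\omega - l\hat\varphi|^2$ survives intact; (4) bound the coupling term, leaving $\mathrm{Re}(ikl\xi(i\xi\hat\varphi - \hat\psi - l\hat\omega)\overline{(i\xi\hat\omega - l\hat\varphi)})$ explicit (it will be cancelled later against the corresponding term in a twin functional, just as $T_1$'s shear-force term is meant to be), and apply Young's inequality to the genuinely lower-order pieces, absorbing half of the $|\xi|^2|i\xi\hat\omega - l\hat\varphi|^2$ dissipation and collecting everything else into $C_3|\xi|^2|\hat\theta_1|^2$; this yields \eqref{e36}. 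The main obstacle is bookkeeping the $\xi$-powers so that the terms kept explicit are precisely those that will cancel against a companion functional, while every term that gets Young-ed is genuinely of lower order than the $|\xi|^2|i\xi\hat\omega - l\hat\varphi|^2$ we are trying to extract; the cancellation mechanism of the thermal cross terms via \eqref{e4} is identical to Lemma \ref{lem3'} and should go through verbatim.
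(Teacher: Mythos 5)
Your proposal follows the paper's proof essentially verbatim: the paper multiplies \eqref{e3} by $i\xi\,\overline{(i\xi\hat{\omega}-l\hat{\varphi})}$ and by $\tfrac{i\xi}{m_1}\overline{\hat{\theta}}_1$, uses \eqref{e4} to trade $(i\xi\hat{\omega}-l\hat{\varphi})_t$ for $\hat{\theta}_{1t}$ and $\hat{\theta}_1$, adds the two identities so the $\mathrm{Re}\bigl(i\xi\hat{\omega}_t\overline{\hat{\theta}}_{1t}\bigr)$ cross terms cancel, and finishes with Young's inequality — exactly your steps (1)–(4). The only slip is the sign of your first multiplier: it must be $+i\xi\,\overline{(i\xi\hat{\omega}-l\hat{\varphi})}$ rather than $-i\xi\,\overline{(i\xi\hat{\omega}-l\hat{\varphi})}$ (as your own description of the boundary term $\tfrac{d}{dt}\mathrm{Re}\bigl(i\rho_1\xi\hat{\omega}_t\overline{(i\xi\hat{\omega}-l\hat{\varphi})}\bigr)$ and of the positive dissipation $k_0\xi^2|i\xi\hat{\omega}-l\hat{\varphi}|^2$ already presupposes), since the minus sign would flip the dissipative term to $-k_0\xi^2|i\xi\hat{\omega}-l\hat{\varphi}|^2$.
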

\begin{proof}

Multiplying (\ref{e3}) by $i\xi\overline{\left(i\xi \hat{\omega } - l \hat{\varphi}\right)}$ and taking real part, we obtain
\begin{align}\label{e37}
\frac{d}{dt} Re\left(i\rho_1\xi\hat{\omega}_{t}\overline{\left(i\xi \hat{\omega } - l \hat{\varphi}\right)}\right)&-Re\left(i\rho_1\xi\hat{\omega}_{t}\overline{\left(i\xi \hat{\omega } - l \hat{\varphi}\right)}_t\right)+k_0|\xi|^2 | i\xi \hat{\omega } - l \hat{\varphi}|^2 \notag \\
&- Re\left(ikl\xi\left ( i\xi \hat{\varphi} - \hat{\psi} -l \hat{\omega }  \right )\overline{\left(i\xi \hat{\omega } - l \hat{\varphi}\right)}\right)- Re\left( \gamma \xi^2 \hat{\theta}_1\overline{\left(i\xi \hat{\omega } - l \hat{\varphi}\right)}\right)=0,
\end{align}
using (\ref{e4}), it follows that 
\begin{align}\label{e38}
\frac{d}{dt} Re\left(i\rho_1\xi\hat{\omega}_{t}\overline{\left(i\xi \hat{\omega } - l \hat{\varphi}\right)}\right)&+\frac{\rho_1}{m_1}Re\left(i\xi\hat{\omega}_t\overline{\hat{\theta}}_{1t}\right) +\frac{\rho_1k_1}{m_1}Re\left(i\xi^3\hat{\omega}_t\overline{\hat{\theta}}_{1}\right)+k_0|\xi|^2 | i\xi \hat{\omega } - l \hat{\varphi}|^2 \notag \\
&- Re\left(ikl\xi\left ( i\xi \hat{\varphi} - \hat{\psi} -l \hat{\omega }  \right )\overline{\left(i\xi \hat{\omega } - l \hat{\varphi}\right)}\right)- Re\left( \gamma \xi^2 \hat{\theta}_1\overline{\left(i\xi \hat{\omega } - l \hat{\varphi}\right)}\right)=0.
\end{align}
On the other hand, multiplying (\ref{e3}) by $\dfrac{i\xi}{m_1}\overline{\hat{\theta}}_1$ and taking real part, 
\begin{align}\label{e39}
\frac{d}{dt} Re\left(\frac{i\rho_1\xi}{m_1}\hat{\omega}_{t}\overline{\hat{\theta}}_1\right)&-\frac{\rho_1}{m_1}Re\left(i\xi\hat{\omega}_{t}\overline{\hat{\theta}}_{1t}\right)+Re\left( \frac{k_0\xi^2}{m_1} \left( i\xi \hat{\omega } - l \hat{\varphi}\right)\overline{\hat{\theta}}_1\right) \notag \\
&- Re\left(\frac{ikl\xi}{m_1}\left ( i\xi \hat{\varphi} - \hat{\psi} -l \hat{\omega }  \right )\overline{\hat{\theta}}_1\right)- \frac{\gamma }{m_1}|\xi|^2|\hat{\theta}_1|^2=0. 
\end{align}
Adding (\ref{e38}) and (\ref{e39}), applying Young inequality, (\ref{e36}) follows.
\end{proof}

\begin{lem}\label{lem3}
Consider the functional 
\begin{equation*}
J_2(\xi,t):=lT_1(\xi,t)+T_2(\xi,t).
\end{equation*}
Then, there exist $\delta > 0$ such that 
\begin{align*}
\frac{d}{dt}J_2(\xi,t)+k_0\delta| i\xi \hat{\omega } - l \hat{\varphi}|^2 \leq &\frac{\rho_1lk_1}{m_1} |\xi|^2|\hat{\varphi}_t||\hat{\theta}_{1}|+\frac{\rho_1k_1}{m_1} |\xi|^3|\hat{\omega}_t||\hat{\theta}_{1}| + \frac{2kl}{m_1}|\xi|| i\xi \hat{\varphi} - \hat{\psi} -l \hat{\omega }||\hat{\theta}_1|+C_4(1+\xi^2)|\hat{\theta}_1|^2,
\end{align*}
where $C_4$ is a positive constant.
\end{lem}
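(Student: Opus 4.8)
The plan is to form the linear combination $J_2 = lT_1 + T_2$ and simply add the two differential inequalities \eqref{e10} and \eqref{e36} furnished by Lemmas \ref{lem3'} and \ref{lem3''}, after scaling \eqref{e10} by $l$. The motivation for this particular combination is that the cross terms involving $Re\big(ik\xi(i\xi\hat\varphi-\hat\psi-l\hat\omega)\overline{(i\xi\hat\omega-l\hat\varphi)}\big)$ appear with opposite signs in the two inequalities once the factor $l$ is inserted: in $l\cdot\eqref{e10}$ the term is $-lRe\big(ik\xi(\cdots)\overline{(\cdots)}\big)$, while in \eqref{e36} the analogous term is $+Re\big(ikl\xi(\cdots)\overline{(\cdots)}\big)$, so they cancel exactly. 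This is the whole point of choosing $J_2$ this way: it removes the only term on the right-hand side that is not manifestly controllable by the dissipation $\gamma\xi^2(|\hat\theta_1|^2+|\hat\theta_2|^2)$ of Lemma \ref{lem1} together with lower-order pieces.

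After that cancellation, what remains on the left is $\tfrac{k_0 l^2}{2}|i\xi\hat\omega-l\hat\varphi|^2$ from $l\cdot\eqref{e10}$ plus $\tfrac{k_0}{2}|\xi|^2|i\xi\hat\omega-l\hat\varphi|^2$ from \eqref{e36}, and since both coefficients are positive we may bound the sum below by $k_0\delta\,|i\xi\hat\omega-l\hat\varphi|^2$ for a suitable $\delta>0$ (for instance $\delta = \tfrac{l^2}{2}$, discarding the nonnegative $|\xi|^2$-weighted term, or one can keep a combined constant). On the right-hand side we collect the surviving terms: $\tfrac{\rho_1 l k_1}{m_1}|\xi|^2|\hat\varphi_t||\hat\theta_1|$ and $\tfrac{\rho_1 k_1}{m_1}|\xi|^3|\hat\omega_t||\hat\theta_1|$ (the $|\hat\varphi_t|$, $|\hat\omega_t|$ velocity terms), the two terms $\tfrac{kl}{m_1}|\xi||\hat\theta_1||i\xi\hat\varphi-\hat\psi-l\hat\omega|$ coming one from each lemma (hence the coefficient $\tfrac{2kl}{m_1}$ in the statement), and the $|\hat\theta_1|^2$-type terms $lC_2|\hat\theta_1|^2 + C_3|\xi|^2|\hat\theta_1|^2$, which combine into $C_4(1+\xi^2)|\hat\theta_1|^2$ after absorbing constants. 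This gives precisely the asserted inequality.

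I do not expect any genuine obstacle here; the lemma is an essentially bookkeeping consequence of the two preceding lemmas. The one point that requires a line of care is verifying that the two cross terms really do cancel rather than merely partially cancel — one should check the signs and the factor $l$ by writing $l\cdot\eqref{e10}$ explicitly: the term $-Re\big(ik\xi(i\xi\hat\varphi-\hat\psi-l\hat\omega)\overline{(i\xi\hat\omega-l\hat\varphi)}\big)$ is multiplied by $l$, producing $-Re\big(ikl\xi(\cdots)\overline{(\cdots)}\big)$, which is the exact negative of the corresponding term $+Re\big(ikl\xi(\cdots)\overline{(\cdots)}\big)$ in \eqref{e36}. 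A second minor point is that one must keep the factor $k_0$ uniform in front of $|i\xi\hat\omega-l\hat\varphi|^2$ on the left, which is why we extract a common $k_0$ and absorb the remaining numerical factor into $\delta$. Everything else — relabeling $lC_2+C_3\xi^2 \le C_4(1+\xi^2)$ and naming $C_4$ — is immediate.

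\begin{proof}
Multiply \eqref{e10} by $l>0$ and add the result to \eqref{e36}. On the left-hand side we obtain
\[
\frac{d}{dt}\big(lT_1+T_2\big)(\xi,t)+\frac{k_0l^2}{2}|i\xi\hat\omega-l\hat\varphi|^2+\frac{k_0}{2}|\xi|^2|i\xi\hat\omega-l\hat\varphi|^2,
\]
and since $\frac{k_0}{2}|\xi|^2|i\xi\hat\omega-l\hat\varphi|^2\ge 0$, choosing $\delta>0$ with $\delta\le \tfrac{l^2}{2}$ yields the lower bound $k_0\delta\,|i\xi\hat\omega-l\hat\varphi|^2$. On the right-hand side, the term $-l\,Re\big(ik\xi(i\xi\hat\varphi-\hat\psi-l\hat\omega)\overline{(i\xi\hat\omega-l\hat\varphi)}\big)$ arising from $l\cdot\eqref{e10}$ cancels exactly with the term $+Re\big(ikl\xi(i\xi\hat\varphi-\hat\psi-l\hat\omega)\overline{(i\xi\hat\omega-l\hat\varphi)}\big)$ in \eqref{e36}. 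The remaining terms are
\[
\frac{\rho_1lk_1}{m_1}|\xi|^2|\hat\varphi_t||\hat\theta_1|+\frac{\rho_1k_1}{m_1}|\xi|^3|\hat\omega_t||\hat\theta_1|+\frac{kl}{m_1}|\xi||\hat\theta_1||i\xi\hat\varphi-\hat\psi-l\hat\omega|+\frac{kl}{m_1}|\xi||\hat\theta_1||i\xi\hat\varphi-\hat\psi-l\hat\omega|+lC_2|\hat\theta_1|^2+C_3|\xi|^2|\hat\theta_1|^2,
\]
where the two identical middle terms come from $l\cdot\eqref{e10}$ and \eqref{e36}, respectively. Combining them gives the coefficient $\tfrac{2kl}{m_1}$, and bounding $lC_2|\hat\theta_1|^2+C_3|\xi|^2|\hat\theta_1|^2\le C_4(1+\xi^2)|\hat\theta_1|^2$ with $C_4:=\max\{lC_2,C_3\}$ completes the proof.
\end{proof}
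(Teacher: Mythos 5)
Your proof is correct and follows essentially the same route as the paper: multiply \eqref{e10} by $l$, add \eqref{e36}, observe the exact cancellation of the $Re\bigl(ikl\xi(i\xi\hat{\varphi}-\hat{\psi}-l\hat{\omega})\overline{(i\xi\hat{\omega}-l\hat{\varphi})}\bigr)$ terms, and collect the rest. The only difference is that the paper keeps the full coefficient $\frac{k_0}{2}(l^2+\xi^2)$ and converts it to $k_0\delta(1+\xi^2)$ (its inequality \eqref{e43}, a slightly stronger weighted form used later in Proposition \ref{lem8}), whereas you discard the $\xi^2$ part — which still suffices for the lemma as stated.
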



\begin{proof}
Lemmas (\ref{lem3'}) and (\ref{lem3''}) imply that
\begin{align*}
\frac{d}{dt}J_2(\xi,t) +\frac{k_0}{2}(l^2+\xi^2)|i\xi \hat{\omega } - l \hat{\varphi}|^2 \leq &\frac{\rho_1lk_1}{m_1}|\xi|^2|\hat{\varphi}_t||\hat{\theta}_1|+\frac{2kl}{m_1}|\xi||\hat{\theta}_1||i\xi\hat{\varphi}-\hat{\psi}-l\hat{\omega}|\notag\\
&+\frac{\rho_1k_1}{m_1}|\xi|^3|\hat{\omega}_t||\hat{\theta}_1|+C_4(1+ \xi^2)|\hat{\theta}_1|^2.
\end{align*}
Note that there exist $\delta >0$ such that $2\delta \leq \frac{l^2+\xi^2}{1+\xi^2}$. Thus,
\begin{align}\label{e43}
\frac{d}{dt}J_2(\xi,t) +k_0\delta(1+\xi^2)|i\xi \hat{\omega } - l \hat{\varphi}|^2 \leq &\frac{\rho_1lk_1}{m_1}|\xi|^2|\hat{\varphi}_t||\hat{\theta}_1|+\frac{2kl}{m_1}|\xi||\hat{\theta}_1||i\xi\hat{\varphi}-\hat{\psi}-l\hat{\omega}|\notag\\
&+\frac{\rho_1k_1}{m_1}|\xi|^3|\hat{\omega}_t||\hat{\theta}_1|+C_4(1+ \xi^2)|\hat{\theta}_1|^2.
\end{align}
\end{proof}

\begin{lem}\label{lem4}
Consider the functional 
\begin{equation*}
J_3(\xi,t)=Re\left(-\rho_2 \hat{\psi}_t\overline{\left ( i\xi \hat{\varphi} - \hat{\psi} -l \hat{\omega }  \right )}-i\frac{\rho_1b}{k}\xi\hat{\psi}\overline{\hat{\varphi}_t}\right).
\end{equation*}
If $\frac{\rho_1}{\rho_2} =\frac{k}{b}$ and $k=k_0$, then
\begin{align}\label{e12}
\frac{d}{dt}J_3(\xi,t) +\frac{k}{2}|i\xi \hat{\varphi} -\hat{\psi} -l \hat{\omega }|^2 \leq \rho_2 |\hat{\psi}_t|^2 +\rho_2 l Re\left( \hat{\psi}_t\overline{\hat{\omega}_t}\right) -blRe \left( i\xi\hat{\psi}\overline{\left ( i\xi \hat{\omega } - l \hat{\varphi} \right )}\right) + \frac{bl\gamma}{k} |\xi| |\hat{\psi}||\hat{\theta}_1|+C_5|\xi|^2|\hat{\theta}_2|^2.
\end{align}
Moreover, if $\frac{\rho_1}{\rho_2} \neq \frac{k}{b}$ or $k\neq k_0$, then
\begin{align}\label{e12'}
\frac{d}{dt}J_3(\xi,t) +\frac{k}{2}|i\xi \hat{\varphi} -\hat{\psi} -l \hat{\omega }|^2 \leq &\rho_2 |\hat{\psi}_t|^2 +\rho_2 l Re\left( \hat{\psi}_t\overline{\hat{\omega}_t}\right) +\left(\rho_2-\frac{b\rho_1}{k}\right) Re\left( i\xi\hat{\psi}_t\overline{\hat{\varphi}_t}\right) \notag\\
&-\frac{k_0bl}{k}Re \left( i\xi\hat{\psi}\overline{\left ( i\xi \hat{\omega } - l \hat{\varphi} \right )}\right)  + \frac{bl\gamma}{k} |\xi| |\hat{\psi}||\hat{\theta}_1|+C_5|\xi|^2|\hat{\theta}_2|^2,
\end{align}
where $C_5$ is a positive constant.
\end{lem}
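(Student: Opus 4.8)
The plan is to differentiate each of the two building blocks of $J_3$ separately and substitute the evolution equations \eqref{e1} and \eqref{e2}; the functional is engineered so that a troublesome cross term produced by one block is annihilated by the other.

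First I would treat $Re(-\rho_2\hat{\psi}_t\overline{(i\xi\hat{\varphi}-\hat{\psi}-l\hat{\omega})})$. Differentiating in $t$, the $\hat{\psi}_{tt}$-part is replaced using \eqref{e2}, which gives $-\rho_2\hat{\psi}_{tt}=b\xi^2\hat{\psi}-k(i\xi\hat{\varphi}-\hat{\psi}-l\hat{\omega})+i\gamma\xi\hat{\theta}_2$; pairing with $\overline{(i\xi\hat{\varphi}-\hat{\psi}-l\hat{\omega})}$ yields the dissipation term $-k|i\xi\hat{\varphi}-\hat{\psi}-l\hat{\omega}|^2$ together with the cross term $b\xi^2 Re(\hat{\psi}\overline{(i\xi\hat{\varphi}-\hat{\psi}-l\hat{\omega})})$ and a heat term $\gamma\xi Re(i\hat{\theta}_2\overline{(i\xi\hat{\varphi}-\hat{\psi}-l\hat{\omega})})$. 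The time derivative hitting the bracket itself produces $\rho_2|\hat{\psi}_t|^2+\rho_2 l\, Re(\hat{\psi}_t\overline{\hat{\omega}_t})+\rho_2\, Re(i\xi\hat{\psi}_t\overline{\hat{\varphi}_t})$.

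Next I would differentiate $Re(-i\frac{\rho_1 b}{k}\xi\hat{\psi}\overline{\hat{\varphi}_t})$, replacing the $\hat{\varphi}_{tt}$-part by \eqref{e1}. The reason for the coefficient $\rho_1 b/k$ is precisely that the shear-force contribution $ik\xi(i\xi\hat{\varphi}-\hat{\psi}-l\hat{\omega})$ of \eqref{e1} generates $-b\xi^2 Re(\hat{\psi}\overline{(i\xi\hat{\varphi}-\hat{\psi}-l\hat{\omega})})$, exactly cancelling the cross term from the first block; what remains is $-\frac{\rho_1 b}{k}Re(i\xi\hat{\psi}_t\overline{\hat{\varphi}_t})$, the axial contribution $-\frac{b k_0 l}{k}Re(i\xi\hat{\psi}\overline{(i\xi\hat{\omega}-l\hat{\varphi})})$ and the heat term $\frac{bl\gamma}{k}Re(i\xi\hat{\psi}\overline{\hat{\theta}_1})$.

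Adding the two identities, the $b\xi^2$ terms cancel and the coefficient of $Re(i\xi\hat{\psi}_t\overline{\hat{\varphi}_t})$ becomes $\rho_2-\frac{b\rho_1}{k}$. In the regime $\frac{\rho_1}{\rho_2}=\frac{k}{b}$ and $k=k_0$ this coefficient vanishes and $\frac{b k_0 l}{k}=bl$; estimating $\gamma|\xi||\hat{\theta}_2||i\xi\hat{\varphi}-\hat{\psi}-l\hat{\omega}|$ by Young's inequality absorbs $\frac{k}{2}|i\xi\hat{\varphi}-\hat{\psi}-l\hat{\omega}|^2$ into the left-hand side and leaves a term $C_5\xi^2|\hat{\theta}_2|^2$, which gives \eqref{e12}. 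In the opposite regime I would simply keep the $(\rho_2-\frac{b\rho_1}{k})Re(i\xi\hat{\psi}_t\overline{\hat{\varphi}_t})$ term and the unsimplified axial term, obtaining \eqref{e12'}. The only genuine subtlety is spotting this cancellation and keeping the real parts and complex conjugates straight; without it one is left with an uncontrolled term of the form $\xi^2|\hat{\psi}||i\xi\hat{\varphi}-\hat{\psi}-l\hat{\omega}|$ at high frequencies. Everything else is routine.
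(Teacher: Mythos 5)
Your proposal is correct and follows essentially the same route as the paper: the paper multiplies \eqref{e2} by $-\overline{(i\xi\hat{\varphi}-\hat{\psi}-l\hat{\omega})}$ to get its identity \eqref{e11} and then substitutes \eqref{e1} into the cross term $Re(b\xi^2\hat{\psi}\overline{(i\xi\hat{\varphi}-\hat{\psi}-l\hat{\omega})}) = bRe(i\xi\hat{\psi}\,\overline{i\xi(i\xi\hat{\varphi}-\hat{\psi}-l\hat{\omega})})$, which regenerates the second block of $J_3$ as a total time derivative — exactly the cancellation you describe, merely organized in the opposite order. The signs, the coefficient $\rho_2-\frac{b\rho_1}{k}$ of $Re(i\xi\hat{\psi}_t\overline{\hat{\varphi}_t})$, and the final Young-inequality step all match the paper's argument.
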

\begin{proof}
Multiplying $(\ref{e2})$ by $-\overline{ \left ( i\xi \hat{\varphi} - \hat{\psi} -l \hat{\omega }  \right )}$ and taking real part,
\begin{align}\label{e11}
\frac{d}{dt}Re\left(-\rho_2\hat{\psi}_{t}\overline{ \left ( i\xi \hat{\varphi} - \hat{\psi} -l \hat{\omega }  \right )}\right) - Re\left(i\rho_2\xi\hat{\psi}_{t}\overline{\hat{\varphi}_t}\right)  &-\rho_2 |\hat{\psi}_t|^2 -Re \left(\rho_2 l\hat{\psi}_t \overline{\hat{\omega }_t}\right)-Re\left(b\xi ^{2}\hat{\psi }\overline{ \left ( i\xi \hat{\varphi} -\hat{\psi} -l \hat{\omega }  \right )}\right) \notag\\
&+ k| i\xi \hat{\varphi} - \hat{\psi} -l \hat{\omega }  |^2  -Re\left( i\gamma \xi  \hat {\theta}_2\overline{ \left ( i\xi \hat{\varphi} - \hat{\psi} -l \hat{\omega }  \right )}\right) =0.
\end{align}
First, suppose that 
\begin{equation}\label{swp}
\text{$\frac{\rho_1}{\rho_2} =\frac{k}{b}$ and $k=k_0$}.
\end{equation}
By $(\ref{e1})$ and $(\ref{swp})$,
\begin{align*}
Re\left(b\xi ^{2}\hat{\psi }\overline{ \left ( i\xi \hat{\varphi} - \hat{\psi} -l \hat{\omega }  \right )}\right) &=bRe\left(i\xi\hat{\psi }\overline{i\xi \left ( i\xi \hat{\varphi} -\hat{\psi} -l \hat{\omega }  \right )}\right) \\
&=\frac{\rho_1 b}{k}Re\left(i\xi\hat{\psi}\overline{\hat{\varphi} _{tt}}\right)  -blRe \left(i\xi\hat{\psi} \overline{\left ( i\xi \hat{\omega } - l \hat{\varphi} \right )}\right) +\frac{bl\gamma}{k}Re\left(i\xi\hat{\psi}\overline{\hat{\theta}_1}\right) \\
&=\rho_2\frac{d}{dt} Re\left(i\xi\hat{\psi}\overline{\hat{\varphi} _{t}}\right) -\rho_2 Re\left(i\xi\hat{\psi}_t\overline{\hat{\varphi} _{t}}\right) -blRe \left(i\xi\hat{\psi} \overline{\left ( i\xi \hat{\omega } - l \hat{\varphi} \right )}\right) +\frac{bl\gamma}{k}Re\left(i\xi\hat{\psi}\overline{\hat{\theta}_1}\right).
\end{align*}
Substituting in $(\ref{e11})$, we have
\begin{align*}
\frac{d}{dt}J_3(\xi,t)+k| i\xi \hat{\varphi} - \hat{\psi} -l \hat{\omega }  |^2 \leq &\rho_2 |\hat{\psi}_t|^2 + Re \left(\rho_2 l\hat{\psi}_t \overline{\hat{\omega }_t}\right)-blRe \left(i\xi\hat{\psi} \overline{\left ( i\xi \hat{\omega } - l \hat{\varphi} \right )}\right) \\
&+\frac{bl\gamma}{k} |\xi| |\hat{\psi}||\hat{\theta}_1|+\gamma |\xi| |\hat {\theta}_2|| i\xi \hat{\varphi} - \hat{\psi} -l \hat{\omega }|.
\end{align*}
Applying Young inequality, $(\ref{e12})$ follows. Now, suppose that 
\begin{equation}\label{dwp}
\frac{\rho_1}{\rho_2} \neq \frac{k}{b} \quad \text{or} \quad k\neq k_0.
\end{equation}
Proceeding as above, $(\ref{e1})$ implies that
\begin{align*}
Re\left(b\xi ^{2}\hat{\psi }\overline{ \left ( i\xi \hat{\varphi} - \hat{\psi} -l \hat{\omega }  \right )}\right) 
&=\frac{\rho_1 b}{k}\frac{d}{dt}Re\left(i\xi\hat{\psi}\overline{\hat{\varphi} _{t}}\right) - \frac{\rho_1 b}{k}Re\left(i\xi\hat{\psi}_t\overline{\hat{\varphi} _{t}}\right)  -\frac{k_0}{k}blRe \left(i\xi\hat{\psi} \overline{\left ( i\xi \hat{\omega } - l \hat{\varphi} \right )}\right) \\
&+\frac{bl\gamma}{k}Re\left(i\xi\hat{\psi}\overline{\hat{\theta}_1}\right). 
\end{align*}
Substituting in $(\ref{e11})$ and applying Young inequality, we obtain $(\ref{e12'})$. 
\end{proof}

\begin{lem}\label{lem5}
Let $0<\varepsilon_1 < \frac{\rho_2l^2}{2\rho_1}$ and consider the functional
\begin{equation*}
J_4(\xi,t)=Re\left(\frac{\rho_2^2l^2}{\rho_1}\hat{\psi}_t\overline{\hat{\psi}}-\rho_2l\hat{\omega}_t\overline{\hat{\psi}}\right).
\end{equation*}
Then,
\begin{align}\label{e13'}
\frac{d}{dt}J_4(\xi,t)+b\left(\frac{\rho_2l^2}{\rho_1}-\frac{\varepsilon_1}{2}\right) \xi^2|\hat{\psi}|^2 \leq  \frac{\rho_2^2l^2}{\rho_1}|\hat{\psi}_t|^2 - \rho_2lRe\left(\overline{\hat{\psi}}_t\hat{\omega}_t\right) + \frac{\rho_2k_0l}{\rho_1}Re&\left( i\xi \hat{\psi}\overline{\left ( i\xi \hat{\omega } - l \hat{\varphi} \right )}\right) \notag \\
&+ C(\varepsilon_1)\left(|\hat{\theta}_1|^2+|\hat{\theta}_2|^2\right),
\end{align}
where $ C(\varepsilon_1)$ is a positive constant.
\end{lem}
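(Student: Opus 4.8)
The plan is to differentiate $J_4$ directly and eliminate the second-order time derivatives via $(\ref{e2})$ and $(\ref{e3})$. Since $J_4(\xi,t)=\frac{\rho_2^2l^2}{\rho_1}\mathrm{Re}(\hat\psi_t\overline{\hat\psi})-\rho_2l\,\mathrm{Re}(\hat\omega_t\overline{\hat\psi})$, the product rule yields
\begin{equation*}
\frac{d}{dt}J_4=\frac{\rho_2^2l^2}{\rho_1}\Big(\mathrm{Re}(\hat\psi_{tt}\overline{\hat\psi})+|\hat\psi_t|^2\Big)-\rho_2l\Big(\mathrm{Re}(\hat\omega_{tt}\overline{\hat\psi})+\mathrm{Re}(\hat\omega_t\overline{\hat\psi}_t)\Big).
\end{equation*}
The terms $\frac{\rho_2^2l^2}{\rho_1}|\hat\psi_t|^2$ and $-\rho_2l\,\mathrm{Re}(\hat\omega_t\overline{\hat\psi}_t)$ already appear on the right-hand side of $(\ref{e13'})$, so only the two terms carrying $\hat\psi_{tt}$ and $\hat\omega_{tt}$ remain to be handled.

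Next I would substitute $\rho_2\hat\psi_{tt}=-b\xi^2\hat\psi+k(i\xi\hat\varphi-\hat\psi-l\hat\omega)-i\gamma\xi\hat\theta_2$ from $(\ref{e2})$ and $\rho_1\hat\omega_{tt}=ik_0\xi(i\xi\hat\omega-l\hat\varphi)+kl(i\xi\hat\varphi-\hat\psi-l\hat\omega)-i\gamma\xi\hat\theta_1$ from $(\ref{e3})$, and then take real parts after pairing with $\frac{\rho_2l^2}{\rho_1}\overline{\hat\psi}$ and $-\frac{\rho_2l}{\rho_1}\overline{\hat\psi}$, respectively. The shear-force contributions are $\frac{\rho_2l^2k}{\rho_1}\mathrm{Re}\big((i\xi\hat\varphi-\hat\psi-l\hat\omega)\overline{\hat\psi}\big)$ and $-\frac{\rho_2l^2k}{\rho_1}\mathrm{Re}\big((i\xi\hat\varphi-\hat\psi-l\hat\omega)\overline{\hat\psi}\big)$, which cancel exactly; this cancellation is precisely what forces the coefficients $\frac{\rho_2^2l^2}{\rho_1}$ and $\rho_2l$ in the definition of $J_4$. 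What remains is the desired dissipation $-b\frac{\rho_2l^2}{\rho_1}\xi^2|\hat\psi|^2$, the axial term $\frac{\rho_2k_0l}{\rho_1}\mathrm{Re}\big(i\xi\hat\psi\overline{(i\xi\hat\omega-l\hat\varphi)}\big)$ (using $\mathrm{Re}\big(ik_0\xi(i\xi\hat\omega-l\hat\varphi)\overline{\hat\psi}\big)=-k_0\,\mathrm{Re}\big(i\xi\hat\psi\overline{(i\xi\hat\omega-l\hat\varphi)}\big)$), and two heat cross terms of the form $\pm\frac{\rho_2l\gamma}{\rho_1}\mathrm{Re}(i\xi\hat\theta_j\overline{\hat\psi})$, $j=1,2$.

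To finish, I would estimate the heat cross terms by Young's inequality, $|\xi|\,|\hat\theta_j|\,|\hat\psi|\le\eta\,\xi^2|\hat\psi|^2+C_\eta|\hat\theta_j|^2$, choosing $\eta$ so that the total amount subtracted from $b\frac{\rho_2l^2}{\rho_1}\xi^2|\hat\psi|^2$ equals $\frac{b\varepsilon_1}{2}\xi^2|\hat\psi|^2$; the hypothesis $0<\varepsilon_1<\frac{\rho_2l^2}{2\rho_1}$ keeps the surviving coefficient $b\big(\frac{\rho_2l^2}{\rho_1}-\frac{\varepsilon_1}{2}\big)$ strictly positive, and $C(\varepsilon_1)$ collects the resulting constants, giving $(\ref{e13'})$. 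The only place that requires care is the bookkeeping of conjugates and signs, so that the shear-force terms cancel and the axial term emerges with the coefficient $+\frac{\rho_2k_0l}{\rho_1}$; the rest is routine.
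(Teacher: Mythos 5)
Your proposal is correct and is essentially the paper's own argument in a different order: the paper tests \eqref{e2} against $\overline{\hat{\psi}}$, eliminates the shear-force term wholesale via \eqref{e3} (which introduces $\hat{\omega}_{tt}$ and, after an integration by parts in time, the $-\rho_2 l\,Re(\hat{\omega}_t\overline{\hat{\psi}})$ piece of $J_4$), and then rescales by $\tfrac{\rho_2 l^2}{\rho_1}$, whereas you differentiate $J_4$ first and substitute both second derivatives — the same cancellation and the same Young-inequality step. No gaps.
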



\begin{proof}
Multiplying $(\ref{e2})$ by $\overline{ \hat{\psi}}$ and taking real part,
\begin{align}\label{ee12}
\frac{d}{dt}Re\left(\rho_2\hat{\psi}_{t}\overline{ \hat{\psi}}\right)-\rho_2|\hat{\psi}_t|^2+b\xi ^{2}|\hat{\psi }|^2- Re\left( k\overline{ \hat{\psi}}\left ( i\xi \hat{\varphi} - \hat{\psi} -l \hat{\omega }  \right ) \right)+Re \left( i\gamma \xi\overline{ \hat{\psi}}  \hat {\theta}_2\right) =0.
\end{align}
$(\ref{e3})$ implies that,
\begin{align*}
\frac{d}{dt}Re\left(\rho_2\hat{\psi}_{t}\overline{ \hat{\psi}}\right) - \frac{d}{dt}Re\left(\frac{\rho_1}{l}\overline{\hat{\psi}}\hat{\omega}_{t}\right)
+b\xi ^{2}|\hat{\psi }|^2 &= \rho_2|\hat{\psi}_t|^2-\frac{\rho_1}{l}Re\left(\overline{\hat{\psi}_t}\hat{\omega}_{t}\right) - \frac{k_0}{l}Re\left(i\xi\overline{\hat{\psi}} \left ( i\xi \hat{\omega } - l \hat{\varphi} \right )\right) \\
&+\frac{\gamma}{l}Re\left(i\xi\overline{\hat{\psi}} \hat{\theta}_1\right) - Re \left( i\gamma \xi\overline{ \hat{\psi}}  \hat {\theta}_2\right).
\end{align*}
Multiplying above by $\frac{\rho_2 l^2}{\rho_1}$, we have
\begin{align*}
\frac{d}{dt}J_4(\xi,t)+\frac{b\rho_2 l^2}{\rho_1}\xi ^{2}|\hat{\psi }|^2 &= \frac{\rho_2^2 l^2}{\rho_1}|\hat{\psi}_t|^2-\rho_2lRe\left(\overline{\hat{\psi}_t}\hat{\omega}_{t}\right) - \frac{\rho_2k_0l}{\rho_1}Re\left(i\xi\overline{\hat{\psi}} \left ( i\xi \hat{\omega } - l \hat{\varphi} \right )\right) \\
&+\frac{\gamma\rho_2 l}{\rho_1}Re\left(i\xi\overline{\hat{\psi}} \hat{\theta}_1\right) - \frac{\gamma\rho_2 l^2}{\rho_1}Re \left( i\xi\overline{ \hat{\psi}}  \hat {\theta}_2\right).
\end{align*}
Applying Young inequality and using the wave speeds of propagation, we obtain $(\ref{e13'})$.
\end{proof}

\begin{lem}\label{lem6}
Let $0<\varepsilon_1 < \dfrac{\rho_2l^2}{2\rho_1}$ and consider $K(\xi,t)=J_3(\xi,t)+J_4(\xi,t)$. If $\frac{\rho_1}{\rho_2} =\frac{k}{b}$ and $k=k_0$, then 
\begin{align*}
\frac{d}{dt}K(\xi,t)+\left(\frac{\rho_2l^2}{\rho_1}-\varepsilon_1\right) b\xi^2|\hat{\psi}|^2 + \frac{k}{2}|i\xi \hat{\varphi} - \hat{\psi} -l \hat{\omega }|^2 \leq \rho_2s_1|\hat{\psi}_t|^2 + C(\varepsilon_1)|\hat{\theta}_1|^2+  C(\varepsilon_1)(1+\xi^2)|\hat{\theta}_2|^2.
\end{align*} 
Moreover, if $\frac{\rho_1}{\rho_2} \neq \frac{k}{b}$  or $k\neq k_0$, then
\begin{align*}
\frac{d}{dt}K(\xi,t)&+\left(\frac{\rho_2l^2}{\rho_1}-\varepsilon_1\right) b\xi^2|\hat{\psi}|^2 + \frac{k}{2}|i\xi \hat{\varphi} - \hat{\psi} -l \hat{\omega }|^2 \notag\\
\leq  &\rho_2s_1|\hat{\psi}_t|^2 + \left(\frac{\rho_2}{\rho_1}-\frac{b}{k}\right)k_0l Re\left(i\xi \hat{\psi}(\overline{i\xi\hat{\omega}-l\hat{\varphi}})\right) + \left(\rho_2-\frac{b\rho_1}{k}\right) Re\left(i\xi \hat{\psi}_t\overline{\hat{\varphi}}_t\right) \notag \\
&+ 
C(\varepsilon_1)|\hat{\theta}_1|^2 +  C(\varepsilon_1)(1+\xi^2)|\hat{\theta}_2|^2,
\end{align*}
where $s_1= \frac{\rho_2 l^2}{\rho_1}+1$.
\end{lem}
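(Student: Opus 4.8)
The plan is simply to add the two differential inequalities already established for $J_3$ and $J_4$ and then bookkeep which cross terms survive. Since $K(\xi,t)=J_3(\xi,t)+J_4(\xi,t)$, summing \eqref{e13'} with \eqref{e12} in the equal-wave-speed case (or with \eqref{e12'} otherwise) puts on the left-hand side exactly the two good terms $b\left(\frac{\rho_2l^2}{\rho_1}-\frac{\varepsilon_1}{2}\right)\xi^2|\hat\psi|^2$ and $\frac{k}{2}|i\xi\hat\varphi-\hat\psi-l\hat\omega|^2$, and leaves on the right a handful of terms that I then need to simplify and absorb.

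First I would dispose of the coupling terms. The contributions $\rho_2 l\,Re(\hat\psi_t\overline{\hat\omega_t})$ from \eqref{e12}/\eqref{e12'} and $-\rho_2 l\,Re(\overline{\hat\psi}_t\hat\omega_t)$ from \eqref{e13'} cancel identically because $Re(z)=Re(\bar z)$. For the terms carrying $Re\!\left(i\xi\hat\psi\,\overline{(i\xi\hat\omega-l\hat\varphi)}\right)$: in the equal-wave-speed case one has $-bl\,Re(\cdots)$ from \eqref{e12} and $+\frac{\rho_2k_0l}{\rho_1}Re(\cdots)$ from \eqref{e13'}, and the hypotheses $\frac{\rho_1}{\rho_2}=\frac{k}{b}$, $k=k_0$ force $\frac{\rho_2k_0}{\rho_1}=b$, so these cancel as well and no sign-indefinite coupling remains; in the general case the same two terms combine to $\left(\frac{\rho_2}{\rho_1}-\frac{b}{k}\right)k_0l\,Re\!\left(i\xi\hat\psi\,\overline{(i\xi\hat\omega-l\hat\varphi)}\right)$, and the term $\left(\rho_2-\frac{b\rho_1}{k}\right)Re(i\xi\hat\psi_t\overline{\hat\varphi}_t)$ from \eqref{e12'} is carried along unchanged — both of which vanish precisely when the wave speeds agree, explaining why they show up only in the second estimate.

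Next I would collapse the kinetic terms: adding $\rho_2|\hat\psi_t|^2$ from \eqref{e12}/\eqref{e12'} to $\frac{\rho_2^2l^2}{\rho_1}|\hat\psi_t|^2$ from \eqref{e13'} gives $\rho_2\left(1+\frac{\rho_2l^2}{\rho_1}\right)|\hat\psi_t|^2=\rho_2 s_1|\hat\psi_t|^2$. The only term not yet in the desired shape is $\frac{bl\gamma}{k}|\xi||\hat\psi||\hat\theta_1|$; by Young's inequality I would bound it by $\frac{b\varepsilon_1}{2}\xi^2|\hat\psi|^2+C(\varepsilon_1)|\hat\theta_1|^2$ and absorb the first piece into the good term on the left, improving its coefficient from $b\left(\frac{\rho_2l^2}{\rho_1}-\frac{\varepsilon_1}{2}\right)$ to $b\left(\frac{\rho_2l^2}{\rho_1}-\varepsilon_1\right)$ — here the assumption $\varepsilon_1<\frac{\rho_2l^2}{2\rho_1}$ is exactly what keeps this positive. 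Finally the temperature remainders $C_5\xi^2|\hat\theta_2|^2$ and $C(\varepsilon_1)|\hat\theta_2|^2$ merge into $C(\varepsilon_1)(1+\xi^2)|\hat\theta_2|^2$, and the $|\hat\theta_1|^2$ pieces into $C(\varepsilon_1)|\hat\theta_1|^2$, yielding both claimed inequalities.

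There is no genuine obstacle here beyond careful bookkeeping; the one point deserving attention is the sign-definiteness of the coupling terms $Re\!\left(i\xi\hat\psi\,\overline{(i\xi\hat\omega-l\hat\varphi)}\right)$ and $Re(i\xi\hat\psi_t\overline{\hat\varphi}_t)$. In the equal-wave-speed case they cancel exactly, so $K$ behaves like a clean Lyapunov functional; in the general case they remain as sign-indefinite residuals that will have to be neutralized later by combining $K$ with the other functionals ($J_1$, $J_2$) from the preceding lemmas.
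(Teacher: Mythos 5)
Your proof is correct and is exactly the argument the paper intends: the paper's own proof of this lemma is the one-line remark that it follows from Lemmas~\ref{lem4} and~\ref{lem5} by Young's inequality, and your computation --- the cancellation of the $\rho_2 l\,Re(\hat{\psi}_t\overline{\hat{\omega}_t})$ terms, the cancellation (resp.\ combination into $(\tfrac{\rho_2}{\rho_1}-\tfrac{b}{k})k_0l\,Re(i\xi\hat{\psi}\overline{(i\xi\hat{\omega}-l\hat{\varphi})})$) of the coupling terms according to the wave-speed condition, the identity $\rho_2+\tfrac{\rho_2^2l^2}{\rho_1}=\rho_2 s_1$, and the absorption of $\tfrac{bl\gamma}{k}|\xi||\hat{\psi}||\hat{\theta}_1|$ at the cost of degrading the coefficient from $\tfrac{\rho_2l^2}{\rho_1}-\tfrac{\varepsilon_1}{2}$ to $\tfrac{\rho_2l^2}{\rho_1}-\varepsilon_1$ --- is precisely the bookkeeping the paper omits. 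Nothing further is needed.
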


\begin{proof}
It follows from Lemmas $\ref{lem4}$ and $\ref{lem5}$, applying Young inequality.
\end{proof}

For next lemma, it is necessary to observe that 
\begin{align}
\left(i\xi\hat{\varphi} - \hat{\psi} - l \hat{\omega}\right)_t - i\xi\hat{\varphi}_t + \hat{\psi}_t +l \hat{\omega}_t &= 0 \label{e16} \\
\left(i\xi\hat{\omega} - l \hat{\varphi}\right)_t - i\xi\hat{\omega}_t +l\hat{\varphi}_t &= 0 \label{e17}
\end{align}


\begin{lem}\label{lem7}
Consider the functional, 
\begin{equation*}
H(\xi,t)=\rho_1 Re\left( \left(i\xi \hat{\varphi}- \hat{\psi} -l \hat{\omega }\right)\overline{\hat{\omega}_t}\right)+\rho_1 Re\left(\left(i\xi \hat{\omega } - l \hat{\varphi}\right)\overline{\hat{\varphi}_t}\right).
\end{equation*}
If $\frac{\rho_1}{\rho_2} =\frac{k}{b}$ and $k=k_0$, then
\begin{align}\label{e15}
\frac{d}{dt}H(\xi,t)+\rho_1 l |\hat{\varphi}_t|^2+\frac{\rho_1 l}{2}|\hat{\omega}_t|^2 \leq \frac{\rho_2 k}{2 b l}|\hat{\psi}_t|^2 +\frac{3kl}{2}|i\xi \hat{\varphi} -\hat{\psi} -l \hat{\omega }|^2+\frac{3k_0l}{2}|i\xi \hat{\omega } - l \hat{\varphi}|^2 +C_6(1+\xi^2)|\hat{\theta}_1|^2.
\end{align}
Moreover, if $\frac{\rho_1}{\rho_2} \neq\frac{k}{b}$ or $k\neq k_0$,
\begin{align}\label{e15'}
\frac{d}{dt}H(\xi,t)+\rho_1 l |\hat{\varphi}_t|^2+\frac{\rho_1 l}{2}|\hat{\omega}_t|^2 \leq &\frac{\rho_1}{2l}|\hat{\psi}_t|^2 +C_1(k,k_0)|i\xi \hat{\varphi} -\hat{\psi} -l \hat{\omega }|^2+C_2(k,k_0)(1+\xi^2)|i\xi \hat{\omega } - l \hat{\varphi}|^2 \notag\\
&+C_6(1+\xi^2)|\hat{\theta}_1|^2,
\end{align}
where $C_1(k,k_0)$ and $C_6$ are positive constants.
\end{lem}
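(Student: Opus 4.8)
The plan is to obtain the functional $H$ from two auxiliary identities, one for each summand, and to handle the coupling terms differently according to whether the wave speeds coincide. Concretely, I would first multiply the Fourier equation (\ref{e3}) by $\overline{(i\xi\hat{\varphi}-\hat{\psi}-l\hat{\omega})}$ and take the real part; after using the differential identity (\ref{e16}) to rewrite the term $Re(\rho_1(i\xi\hat{\varphi}-\hat{\psi}-l\hat{\omega})\overline{\hat{\omega}}_{tt})$ as a time derivative plus lower-order pieces, the elastic terms $-k_0(i\xi\hat{\omega}-l\hat{\varphi})_x$ and $-kl(i\xi\hat{\varphi}-\hat{\psi}-l\hat{\omega})$ and the thermal term $i\gamma\xi\hat{\theta}_1$ will produce the quantities $|i\xi\hat{\omega}-l\hat{\varphi}|^2$, $|i\xi\hat{\varphi}-\hat{\psi}-l\hat{\omega}|^2$ and $|\hat{\theta}_1|^2$ on the right, together with a $\rho_1 l|\hat{\omega}_t|^2$-type term coming from the $l\hat{\varphi}_t$ in (\ref{e17}). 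Symmetrically, I would multiply (\ref{e1}) by $\overline{(i\xi\hat{\omega}-l\hat{\varphi})}$, take the real part, use (\ref{e17}) to produce a time derivative, and collect the term $\rho_1 l|\hat{\varphi}_t|^2$ from the $l\hat{\varphi}_t$ contribution. Adding the two identities yields $\frac{d}{dt}H$ plus the positive terms $\rho_1 l|\hat{\varphi}_t|^2$ and (after absorbing a factor) $\tfrac{\rho_1 l}{2}|\hat{\omega}_t|^2$ on the left, with the cross terms in $\hat{\psi}_t$, in $|i\xi\hat{\varphi}-\hat{\psi}-l\hat{\omega}|^2$, in $|i\xi\hat{\omega}-l\hat{\varphi}|^2$ and in $|\hat{\theta}_1|^2$ on the right.

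The essential point, and where the two cases split, is the treatment of the residual coupling term of the form $Re\big((i\xi\hat{\varphi}-\hat{\psi}-l\hat{\omega})\,\overline{(\text{shear force expression involving }\hat{\psi})}\big)$ and the mixed products $Re(\hat{\psi}_t\,\overline{(\cdots)})$ that appear after inserting (\ref{e2}). When $\frac{\rho_1}{\rho_2}=\frac{k}{b}$ and $k=k_0$, the bending coefficient $b$ can be exchanged for $k$ up to the prescribed ratio, so these terms recombine cleanly and Young's inequality (with a small parameter against $\rho_1 l|\hat{\varphi}_t|^2$ and $\tfrac{\rho_1 l}{2}|\hat{\omega}_t|^2$) leaves exactly the constants $\tfrac{3kl}{2}$, $\tfrac{3k_0l}{2}$ and $\tfrac{\rho_2 k}{2bl}$ displayed in (\ref{e15}), with the single factor $(1+\xi^2)$ on $|\hat{\theta}_1|^2$ coming from the $k_1\xi^2\hat{\theta}_1$ replacement via (\ref{e4}). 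When the speeds differ, the same recombination is impossible, so an extra factor of $\xi^2$ is forced onto the $|i\xi\hat{\omega}-l\hat{\varphi}|^2$ term — hence the $C_2(k,k_0)(1+\xi^2)|i\xi\hat{\omega}-l\hat{\varphi}|^2$ in (\ref{e15'}) — while the $\hat{\psi}_t$ coefficient changes to $\tfrac{\rho_1}{2l}$; the constants $C_1(k,k_0)$, $C_2(k,k_0)$ then simply record the size of $|b-k|/k$ and similar quantities.

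The main obstacle I anticipate is bookkeeping rather than conceptual: one must carefully track which terms are genuinely $O(|\xi|^0)$ and which pick up powers of $|\xi|$ after the substitutions of (\ref{e1})--(\ref{e4}) into the products, because the regularity-loss mechanism hinges precisely on that extra $\xi^2$ in the high-frequency region in the non-equal-speed case. I would organize the proof by writing the two ``$\frac{d}{dt}$'' identities first with all terms explicit, then substituting (\ref{e2}) and (\ref{e4}) to eliminate second-order and thermal derivatives, then isolating the cross term that distinguishes the two cases, and only at the very end applying Young's inequality with parameters chosen small enough to absorb everything into the velocity terms on the left. Since the structure mirrors Lemmas \ref{lem3'}--\ref{lem6} exactly, I would in practice present only the key identity and the case distinction, stating that the remaining estimates follow by Young's inequality as before.

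\begin{proof}
Multiply $(\ref{e3})$ by $\overline{\left(i\xi\hat{\varphi}-\hat{\psi}-l\hat{\omega}\right)}$ and $(\ref{e1})$ by $\overline{\left(i\xi\hat{\omega}-l\hat{\varphi}\right)}$, take real parts, and use the identities $(\ref{e16})$ and $(\ref{e17})$ to write the terms containing $\hat{\omega}_{tt}$ and $\hat{\varphi}_{tt}$ as time derivatives. Adding the two resulting identities produces
\begin{align*}
\frac{d}{dt}H(\xi,t)+\rho_1 l|\hat{\varphi}_t|^2+\rho_1 l|\hat{\omega}_t|^2
\leq \; & \rho_1 Re\left(i\xi\hat{\varphi}_t\overline{\left(i\xi\hat{\omega}-l\hat{\varphi}\right)}\right) - \rho_1 Re\left(\hat{\psi}_t\overline{\hat{\omega}_t}\right) \\
& + k\,|i\xi\hat{\varphi}-\hat{\psi}-l\hat{\omega}|^2 + k_0\,|i\xi\hat{\omega}-l\hat{\varphi}|^2 \\
& + R(\xi,t) + \gamma|\xi|\,|\hat{\theta}_1|\,|i\xi\hat{\varphi}-\hat{\psi}-l\hat{\omega}| + \gamma|\xi|\,|\hat{\theta}_1|\,|i\xi\hat{\omega}-l\hat{\varphi}|,
\end{align*}
where $R(\xi,t)$ collects the remaining coupling terms, namely a multiple of $Re\left(b\xi^2\hat{\psi}\,\overline{\left(i\xi\hat{\omega}-l\hat{\varphi}\right)}\right)$ arising from inserting $(\ref{e2})$, together with terms controlled by $|\hat{\psi}_t|\,|i\xi\hat{\varphi}-\hat{\psi}-l\hat{\omega}|$.

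Assume first that $\frac{\rho_1}{\rho_2}=\frac{k}{b}$ and $k=k_0$. Then $b\xi^2\hat{\psi}$ equals $\frac{\rho_1 b}{k}$ times $i\xi$ acting on the right-hand side of $(\ref{e2})$, and by $(\ref{e1})$ this can be expressed through $\hat{\varphi}_{tt}$, the elastic terms and $\hat{\theta}_1$; consequently $R(\xi,t)$ is bounded, after Young's inequality, by $\varepsilon\left(|\hat{\varphi}_t|^2+|\hat{\omega}_t|^2\right)+\frac{\rho_2 k}{2bl}|\hat{\psi}_t|^2+\frac{kl}{2}|i\xi\hat{\varphi}-\hat{\psi}-l\hat{\omega}|^2+\frac{k_0l}{2}|i\xi\hat{\omega}-l\hat{\varphi}|^2+C|\hat{\theta}_1|^2$. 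Using $(\ref{e4})$ to replace $\hat{\theta}_{1t}$ wherever it occurs introduces the factor $k_1\xi^2$, hence the $(1+\xi^2)|\hat{\theta}_1|^2$ term. Choosing $\varepsilon$ small enough to absorb the velocity terms into $\rho_1 l|\hat{\varphi}_t|^2$ and $\frac{\rho_1 l}{2}|\hat{\omega}_t|^2$ on the left, and applying Young's inequality to the remaining products, yields $(\ref{e15})$ with the constants as stated.

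Assume now that $\frac{\rho_1}{\rho_2}\neq\frac{k}{b}$ or $k\neq k_0$. The same substitution of $(\ref{e1})$ into $R(\xi,t)$ is still available, but the coefficients no longer recombine; in particular the term $Re\left(b\xi^2\hat{\psi}\,\overline{\left(i\xi\hat{\omega}-l\hat{\varphi}\right)}\right)$ must now be estimated directly, producing $C|\xi|^2|i\xi\hat{\omega}-l\hat{\varphi}|^2$ after Young's inequality, and the $\hat{\psi}_t$ contribution is controlled by $\frac{\rho_1}{2l}|\hat{\psi}_t|^2$. Replacing $\hat{\theta}_{1t}$ by $(\ref{e4})$ again yields the factor $(1+\xi^2)$ on $|\hat{\theta}_1|^2$. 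Absorbing the velocity terms as before and applying Young's inequality to the products gives $(\ref{e15'})$, where $C_1(k,k_0)$, $C_2(k,k_0)$ and $C_6$ are positive constants depending on the indicated data.
\end{proof}
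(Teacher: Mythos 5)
Your opening move coincides with the paper's: test \eqref{e1} against $\overline{(i\xi\hat{\omega}-l\hat{\varphi})}$, test \eqref{e3} against $\overline{(i\xi\hat{\varphi}-\hat{\psi}-l\hat{\omega})}$, and use the kinematic identities \eqref{e16}--\eqref{e17} to produce $\frac{d}{dt}H$ together with $\rho_1 l|\hat{\varphi}_t|^2$ and $\rho_1 l|\hat{\omega}_t|^2$. But from there your proof runs on a mechanism that does not occur in this lemma. The remainder term $R(\xi,t)$ you introduce, containing $Re\bigl(b\xi^2\hat{\psi}\,\overline{(i\xi\hat{\omega}-l\hat{\varphi})}\bigr)$ ``arising from inserting \eqref{e2}'', does not arise: equation \eqref{e2} is never used here. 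The function $\hat{\psi}$ enters the computation only through $\hat{\psi}_t$ in the identity \eqref{e16}, which contributes the single term $\rho_1 Re(\hat{\psi}_t\overline{\hat{\omega}_t})$, estimated directly by Young as $\rho_1|\hat{\psi}_t||\hat{\omega}_t|\le \frac{\rho_1}{2l}|\hat{\psi}_t|^2+\frac{\rho_1 l}{2}|\hat{\omega}_t|^2$ (and $\frac{\rho_1}{2l}=\frac{\rho_2 k}{2bl}$ when $\frac{\rho_1}{\rho_2}=\frac{k}{b}$, which is the only role that hypothesis plays in this lemma). You have imported the substitution trick of Lemma \ref{lem4} — where $b\xi^2\hat{\psi}$ really is rewritten via \eqref{e1} — into a lemma where it has no place, and you then base the entire case distinction on it. Likewise, your claim that the factor $(1+\xi^2)$ on $|\hat{\theta}_1|^2$ comes from ``using \eqref{e4} to replace $\hat{\theta}_{1t}$'' is off target: $\hat{\theta}_{1t}$ never appears (the Type I equations \eqref{e1}, \eqref{e3} contain $\hat{\theta}_1$, not $\hat{\theta}_{1t}$), and \eqref{e4} is not invoked. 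The $(1+\xi^2)$ is simply Young applied to the two products $\gamma|\xi||\hat{\theta}_1||i\xi\hat{\varphi}-\hat{\psi}-l\hat{\omega}|$ (giving $\xi^2|\hat{\theta}_1|^2$) and $\gamma l|\hat{\theta}_1||i\xi\hat{\omega}-l\hat{\varphi}|$ (giving $|\hat{\theta}_1|^2$).

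The step you are actually missing is the one that drives the dichotomy \eqref{e15} versus \eqref{e15'}. After adding the two identities, the velocity cross terms $-\rho_1 Re(i\xi\hat{\omega}_t\overline{\hat{\varphi}_t})$ and $-\rho_1 Re(i\xi\hat{\varphi}_t\overline{\hat{\omega}_t})$ cancel exactly (their sum is $Re\bigl(i\xi\cdot 2Re(\hat{\omega}_t\overline{\hat{\varphi}_t})\bigr)=0$ for real $\xi$), so the term $\rho_1 Re\bigl(i\xi\hat{\varphi}_t\overline{(i\xi\hat{\omega}-l\hat{\varphi})}\bigr)$ in your first display should not be there. The same symmetry applies to the elastic cross terms $-k\,Re\bigl(i\xi(i\xi\hat{\varphi}-\hat{\psi}-l\hat{\omega})\overline{(i\xi\hat{\omega}-l\hat{\varphi})}\bigr)$ and $-k_0\,Re\bigl(i\xi(i\xi\hat{\omega}-l\hat{\varphi})\overline{(i\xi\hat{\varphi}-\hat{\psi}-l\hat{\omega})}\bigr)$: they cancel precisely when $k=k_0$, and otherwise leave a residual bounded by $|k-k_0||\xi||i\xi\hat{\omega}-l\hat{\varphi}||i\xi\hat{\varphi}-\hat{\psi}-l\hat{\omega}|$. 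It is this residual, with its extra factor of $|\xi|$, that Young's inequality converts into the $C_2(k,k_0)(1+\xi^2)|i\xi\hat{\omega}-l\hat{\varphi}|^2$ term of \eqref{e15'}; the condition $\frac{\rho_1}{\rho_2}=\frac{k}{b}$ is essentially irrelevant to this cancellation. Without identifying this pair of cross terms and their conditional cancellation, your argument cannot produce the correct frequency weights in either case, so the proof as written does not go through.
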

\begin{proof}
Multiplying $(\ref{e1})$ by $\overline{\left(i\xi \hat{\omega } - l \hat{\varphi}\right)}$, $(\ref{e17})$ by $\rho_1\overline{\hat{\varphi}_t}$, adding these equalities and taking the real part, we obtain
\begin{align}\label{e20}
\frac{d}{dt}Re\left(\rho_1\left(i\xi\hat{\omega} - l \hat{\varphi}\right)\overline{\hat{\varphi}_t}\right) -\rho_1Re\left( i\xi\hat{\omega}_t \overline{\hat{\varphi}_t}\right)+\rho_1l|\hat{\varphi}_t|^2 &-kRe\left(i\xi \left ( i\xi \hat{\varphi} - \hat{\psi} -l \hat{\omega }  \right )\overline{\left(i\xi \hat{\omega } - l \hat{\varphi}\right)}\right) \notag\\
&-k_0l| i\xi \hat{\omega } - l \hat{\varphi}|^2 +l\gamma Re\left(\hat{\theta}_1\overline{\left(i\xi \hat{\omega } - l \hat{\varphi}\right)}\right) = 0.
\end{align}
Multiplying $(\ref{e3})$ by $\overline{\left ( i\xi \hat{\varphi} - \hat{\psi} -l \hat{\omega }\right)}$, $(\ref{e16})$ by $\rho_1\overline{\hat{\omega}_t}$, adding and  taking the real part, 
\begin{align}\label{e23}
\frac{d}{dt}Re\left(\rho_1\left(i\xi\hat{\varphi} - \hat{\psi} - l \hat{\omega}\right)\overline{\hat{\omega}_t}\right) &-\rho_1Re\left( i\xi\hat{\varphi}_t\overline{\hat{\omega}_t}\right)+\rho_1 Re\left( \hat{\psi}_t\overline{\hat{\omega}_t}\right) +\rho_1l |\hat{\omega}_t|^2 - kl| i\xi \hat{\varphi} - \hat{\psi} -l \hat{\omega }|^2\notag \\
& -k_0Re\left(i\xi \left ( i\xi \hat{\omega } - l \hat{\varphi} \right )\overline{\left ( i\xi \hat{\varphi} - \hat{\psi} -l \hat{\omega }\right)}\right) 
 + \gamma Re\left(i \xi \hat{\theta}_1\overline{\left ( i\xi \hat{\varphi} - \hat{\psi} -l \hat{\omega }\right)}\right)=0.
\end{align}
Adding $(\ref{e20})$ and $(\ref{e23})$,
\begin{align*}
\frac{d}{dt}H(\xi,t) +\rho_1l |\hat{\varphi}_t|^2 +\rho_1l |\hat{\omega}_t|^2 \leq &kl| i\xi \hat{\varphi} - \hat{\psi} -l \hat{\omega }|^2 + k_0l| i\xi \hat{\omega } - l \hat{\varphi}|^2 +\gamma |\xi||\hat{\theta}_1|| i\xi \hat{\varphi} - \hat{\psi} -l \hat{\omega }| + \gamma l |\hat{\theta}_1|| i\xi \hat{\omega } - l \hat{\varphi}| \\
&+|k-k_0||\xi||i\xi\hat{\omega}-l\hat{\varphi}||i\xi\hat{\varphi}-\hat{\psi}-l\hat{\omega}|+\rho_1|\hat{\psi}_t||\hat{\omega}_t|.
\end{align*}
Applying Young inequality and using the wave propagation properties, $(\ref{e15})$ and $(\ref{e15'})$ follow.
\end{proof}
In order to prove our main result, we need to establish a functional equivalent to the energy \eqref{energytypeII} in a polynomial sense. In particular, this kind of functional gives some dissipative terms of the vector solution. First, let us define:
\begin{equation}
\LL_1(\xi,t) = \left\lbrace\begin{tabular}{l l}
$J_1+\varepsilon_2 \xi^2 K+\xi^2J_2+\varepsilon_3\xi^2H$, & if $\frac{\rho_1}{\rho_2} =\frac{k}{b}$ and $k=k_0$, \\
$\frac{\xi^2}{(1+\xi^2+\xi^4)}\left( \lambda_1  \varepsilon_3 J_1+ \frac{\xi^2}{(1+\xi^2+\xi^4)}\left(\varepsilon_3\lambda_2  K + J_2+\varepsilon_3H \right) \right)$, & if $\frac{\rho_1}{\rho_2} \neq \frac{k}{b}$ or $k\neq k_0$.
\end{tabular}\right.
\end{equation}
where $\lambda_1, \lambda_2, \varepsilon_2, \varepsilon_3$ are positive constants to be fixed later. 
\begin{prop}\label{lem8}
There exists constants $M, M' > 0$ such that, if $\frac{\rho_1}{\rho_2} =\frac{k}{b}$ and $k=k_0$, then
\begin{align}\label{e24}
\frac{d}{dt}\LL_1(\xi,t) &+M\xi^2 \left\lbrace b \xi^2|\hat{\psi}|^2 + k|i\xi \hat{\varphi} - \hat{\psi} -l \hat{\omega }|^2 +\rho_2|\hat{\psi}_t|^2+k_0|i\xi \hat{\omega } - l \hat{\varphi}|^2 + \rho_1|\hat{\varphi}_t|^2+\rho_1|\hat{\omega}_t|^2\right\rbrace \notag\\
&\leq C(\varepsilon_1,\varepsilon_2,\varepsilon_3)(1+\xi^2+\xi^4+\xi^6)\xi^2|\hat{\theta}_1|^2 +C(\varepsilon_1,\varepsilon_2)(1+\xi^2+\xi^4)|\hat{\theta}_2|^2.
\end{align}
Moreover, if $\frac{\rho_1}{\rho_2} \neq \frac{k}{b}$ or $k\neq k_0$, we obtain
\begin{align}\label{e24'}
\frac{d}{dt}\LL_1(\xi,t) &+M'\frac{\xi^4}{(1+\xi^2+\xi^4)^2} \left\lbrace b \xi^2|\hat{\psi}|^2 + k|i\xi \hat{\varphi} - \hat{\psi} -l \hat{\omega }|^2 +\rho_2|\hat{\psi}_t|^2+k_0|i\xi \hat{\omega } - l \hat{\varphi}|^2 + \rho_1|\hat{\varphi}_t|^2+\rho_1|\hat{\omega}_t|^2\right\rbrace \notag\\
&\leq C(\varepsilon_1,\varepsilon_3,\lambda_1,\lambda_2)(1+\xi^2)\xi^2\left(|\hat{\theta}_1|^2 +|\hat{\theta}_2|^2 \right)
\end{align}
\end{prop}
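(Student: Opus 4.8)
The plan is to combine all of the Lyapunov-type estimates established in Lemmas \ref{lem1}--\ref{lem7} into a single functional and then choose the free parameters hierarchically. First I would write down the target dissipation: the bracketed expression in \eqref{e24}--\eqref{e24'} is, up to constants, the full energy density $\hat E(\xi,t)$ minus the thermal terms. So the strategy is to produce, for suitable weights, a functional $\LL_1$ whose time derivative dominates $\hat E$ modulo $|\hat\theta_1|^2$ and $|\hat\theta_2|^2$ (times polynomials in $\xi^2$). I would start from the case $\frac{\rho_1}{\rho_2}=\frac{k}{b}$, $k=k_0$. Lemma \ref{lem2} gives $\rho_2\xi^2|\hat\psi_t|^2$ from $J_1$, Lemma \ref{lem6} gives $b\xi^2|\hat\psi|^2$ and $k|i\xi\hat\varphi-\hat\psi-l\hat\omega|^2$ from $K$ (at the cost of a controllable $\rho_2 s_1|\hat\psi_t|^2$), Lemma \ref{lem3} gives $k_0|i\xi\hat\omega-l\hat\varphi|^2$ from $J_2$, and Lemma \ref{lem7} gives $\rho_1 l|\hat\varphi_t|^2+\tfrac{\rho_1 l}{2}|\hat\omega_t|^2$ from $H$ (at the cost of $\tfrac{\rho_2 k}{2bl}|\hat\psi_t|^2$ and multiples of the two "relative displacement" terms). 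The cross terms not of pure thermal type must be absorbed: the $|\hat\psi_t|^2$ produced by $K$ and $H$ is swallowed by the $J_1$-dissipation $\tfrac{m_2\rho_2}{2}\xi^2|\hat\psi_t|^2$; the $|i\xi\hat\varphi-\hat\psi-l\hat\omega|^2$ and $|i\xi\hat\omega-l\hat\varphi|^2$ coming from $H$ are swallowed by the $K$- and $J_2$-dissipations. This forces the ordering $\varepsilon_3\ll\varepsilon_2\ll 1$ with $\xi^2$-weights inserted so that each comparison is uniform in $\xi$; the $\xi^2$ prefactors on $K$, $J_2$, $H$ in the definition of $\LL_1$ are exactly what makes the $J_1$ term (which already carries $\xi^2$ in its dissipation) able to control the others. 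Finally all the genuinely thermal remainders accumulate to the right-hand side of \eqref{e24}, with the worst power $\xi^2(1+\xi^2+\xi^4+\xi^6)|\hat\theta_1|^2$ coming from the $\xi^2 J_2$ term applied to the $C_4(1+\xi^2)|\hat\theta_1|^2$ and $|\xi|^3|\hat\omega_t||\hat\theta_1|$ pieces of Lemma \ref{lem3} (after Young, the $|\hat\omega_t|$ is absorbed and leaves $\xi^6|\hat\theta_1|^2$, times the outer $\xi^2$).

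For the case $\frac{\rho_1}{\rho_2}\neq\frac{k}{b}$ or $k\neq k_0$, the same building blocks are used but Lemmas \ref{lem4}, \ref{lem6}, \ref{lem7} now carry extra indefinite cross terms, namely multiples of $Re(i\xi\hat\psi\,\overline{i\xi\hat\omega-l\hat\varphi})$, $Re(i\xi\hat\psi_t\overline{\hat\varphi_t})$ and larger multiples of $(1+\xi^2)|i\xi\hat\omega-l\hat\varphi|^2$. The idea is that these are lower order relative to the dissipation once one damps the functional $K$ by the rational weight $\xi^2/(1+\xi^2+\xi^4)$: the term $Re(i\xi\hat\psi\,\overline{i\xi\hat\omega-l\hat\varphi})$ is controlled by $\varepsilon\,\xi^2|\hat\psi|^2 + C_\varepsilon \xi^2|i\xi\hat\omega-l\hat\varphi|^2$, and the $\xi^2|i\xi\hat\omega-l\hat\varphi|^2$ is dominated by the $\xi^2$-enhanced $J_2$ dissipation (Lemma \ref{lem3''} gives $\tfrac{k_0}{2}\xi^2|i\xi\hat\omega-l\hat\varphi|^2$ directly). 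The term $Re(i\xi\hat\psi_t\overline{\hat\varphi_t})$ is handled by $\varepsilon|\hat\varphi_t|^2 + C_\varepsilon\xi^2|\hat\psi_t|^2$, again absorbed by $H$- and $J_1$-dissipation after suitable weighting; this is exactly why $J_2$ and $H$ get the extra factor $\xi^2/(1+\xi^2+\xi^4)$ compared to $J_1$ in the second branch of the definition of $\LL_1$. The net effect is that the effective dissipation rate degrades from $\xi^2$ to $\xi^4/(1+\xi^2+\xi^4)^2$, which is $\sim\xi^4$ for $|\xi|\le1$ and $\sim\xi^{-4}$ for $|\xi|\to\infty$ in the first case, and $\sim\xi^{-6}$ in the second — matching the function $s(\xi)$ in the introduction. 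The right-hand side of \eqref{e24'} collapses to $(1+\xi^2)\xi^2(|\hat\theta_1|^2+|\hat\theta_2|^2)$ because the rational weights cut down the polynomial growth of the thermal remainders.

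Concretely I would carry this out in the following order: (i) form the weighted sum $\LL_1$ and differentiate, collecting all terms via Lemmas \ref{lem1}--\ref{lem7}; (ii) isolate the "good" part $\sum(\text{dissipation})$ and the "bad" part (cross terms among the mechanical variables plus thermal terms); (iii) apply Young's inequality with small parameters to every mechanical cross term, pushing a small multiple back onto the corresponding good term and a large multiple onto another good term; (iv) fix $\varepsilon_3$ first (so small that the $H$-contributions to $|\hat\psi_t|^2$, $|i\xi\hat\varphi-\hat\psi-l\hat\omega|^2$, $|i\xi\hat\omega-l\hat\varphi|^2$ are dominated), then $\varepsilon_2$ (for the $K$-contribution of $|\hat\psi_t|^2$ into the $J_1$ term), then, in the second case, $\lambda_1,\lambda_2$ to balance the extra indefinite terms from Lemma \ref{lem6}; (v) extract the common small constant $M$ (resp.\ $M'$) and read off the stated inequalities; (vi) for the second case, verify the $\xi$-uniformity of every absorption by checking the rational weights $\xi^2/(1+\xi^2+\xi^4)$ cancel correctly at both $\xi\to0$ and $\xi\to\infty$. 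The main obstacle, and the step deserving the most care, is (iii)--(iv): one must check that the web of Young-inequality absorptions is actually consistent — i.e.\ that the hierarchy $\varepsilon_3\ll\varepsilon_2\ll 1$ (and $\lambda_i$) can be chosen so that no good term is asked to absorb more than its own size, uniformly in $\xi$. In particular the $|\hat\psi_t|^2$ term is the bottleneck: it receives contributions from $K$ (Lemma \ref{lem6}, coefficient $\rho_2 s_1$), from $H$ (Lemma \ref{lem7}, coefficient $\tfrac{\rho_2 k}{2bl}$ or $\tfrac{\rho_1}{2l}$), and from the Young splitting of $Re(i\xi\hat\psi_t\overline{\hat\varphi_t})$ in the unequal-speed case, while its only source of dissipation is the single term $\tfrac{m_2\rho_2}{2}\xi^2|\hat\psi_t|^2$ from $J_1$ — so the weights on $K$, $H$ (and the $J_1$ coefficient itself in branch two) must be tuned against this one inequality, and the $\xi^2$ weights are essential to make it work. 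Everything else is routine bookkeeping of the thermal remainders onto the right-hand side.
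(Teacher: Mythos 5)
Your plan follows the paper's proof essentially verbatim: the same weighted combination $J_1+\varepsilon_2\xi^2K+\xi^2J_2+\varepsilon_3\xi^2H$ in the equal-speed case, the same rationally weighted $P_1,P_2$ splitting in the unequal-speed case, the same hierarchy $\varepsilon_3\ll\varepsilon_2$ (plus $\lambda_1,\lambda_2,\varepsilon_4$), and the same identification of the $|\hat{\psi}_t|^2$ absorption into the single $J_1$-dissipation term as the binding constraint. The proposal is correct and matches the paper's argument.
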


\begin{proof}
First, we suppose that $\frac{\rho_1}{\rho_2} =\frac{k}{b}$ and $k=k_0$. By using  Lemmas $\ref{lem2}$ and $\ref{lem6}$, we have 
\begin{align*}
\frac{d}{dt}\left\lbrace J_1(\xi,t)+\varepsilon_2  \xi^2 K(\xi,t)\right\rbrace +b\varepsilon_2\left(\frac{\rho_2l^2}{\rho_1}-\varepsilon_1\right) \xi^4|\hat{\psi}|^2 &+ k\frac{\varepsilon_2}{2}\xi^2|i\xi \hat{\varphi} - \hat{\psi} -l \hat{\omega }|^2 \\
+\rho_2\left(\frac{m_2}{2}-s_1\varepsilon_2\right)\xi^2|\hat{\psi}_t|^2
&\leq b|\xi|^3|\hat{\psi}||\hat{\theta}_2|+ k|\xi||\hat{\theta}_2| | i\xi \hat{\varphi} + \hat{\psi} +l \hat{\omega } |\\
&+C(\varepsilon_1, \varepsilon_2)\xi^2|\hat{\theta}_1|^2+  C(\varepsilon_1,\varepsilon_2)(1+\xi^2)\xi^2|\hat{\theta}_2|^2. 
\end{align*}
On the other hand, By Lemmas $\ref{lem3}$ and$\ref{lem7}$, it follows that 
\begin{align*}
\frac{d}{dt}\left\lbrace \xi^2J_2(\xi,t)+\varepsilon_3\xi^2H(\xi,t) \right\rbrace &+ k_0\delta\xi^2|i\xi \hat{\omega } - l \hat{\varphi}|^2 + 
\rho_1\varepsilon_3 l \xi^2|\hat{\varphi}_t|^2+\frac{\rho_1 \varepsilon_3 l}{2}\xi^2|\hat{\omega}_t|^2 \\
&\leq \frac{\rho_1lk_1}{m_1}|\xi|^4|\hat{\varphi}_t||\hat{\theta}_1|+\frac{\rho_1k_1}{m_1}|\xi|^5|\hat{\theta}_1||\hat{\omega}_t| +\frac{2kl}{m_1}|\xi|^3|\hat{\theta}_1||i\xi \hat{\varphi} -\hat{\psi} -l \hat{\omega }|\\
&+C_4(1+\xi^2) \xi^2 |\hat{\theta}_1|^2+\frac{3k_0\varepsilon_3 l}{2}\xi^2|i\xi \hat{\omega } - l \hat{\varphi}|^2 +\frac{3kl\varepsilon_3}{2}|\xi|^2|i\xi \hat{\varphi} -\hat{\psi} -l \hat{\omega }|^2 \\
&+\frac{\rho_2 \varepsilon_3 k}{2 b l}\xi^2|\hat{\psi}_t|^2+C(\varepsilon_3)(1+\xi^2)\xi^2|\hat{\theta}_1|^2.
\end{align*}
Adding and by using Young inequality, we obtain
\begin{align*}
\frac{d}{dt}\LL_1(\xi,t) +b\varepsilon_2\left(\frac{\rho_2l^2}{\rho_1}-2\varepsilon_1\right) \xi^4|\hat{\psi}|^2 &+ k\left(\frac{\varepsilon_2}{4}-2l\varepsilon_3\right)\xi^2|i\xi \hat{\varphi} - \hat{\psi} -l \hat{\omega }|^2 +\rho_2\left(\frac{m}{2}-s_1\varepsilon_2-\frac{k}{2bl}\varepsilon_3\right)\xi^2|\hat{\psi}_t|^2 \notag\\
&+k_0\left(\delta-\frac{3l}{2}\varepsilon_3\right)\xi^2|i\xi \hat{\omega } - l \hat{\varphi}|^2 + 
\frac{\rho_1l}{2}\varepsilon_3 |\xi|^2|\hat{\varphi}_t|^2+\frac{\rho_1l }{4}\varepsilon_3|\xi|^2|\hat{\omega}_t|^2 \notag \\
&\leq C(\varepsilon_1,\varepsilon_2,\varepsilon_3)(1+\xi^2+\xi^4+\xi^6)\xi^2|\hat{\theta}_1|^2 +C(\varepsilon_1,\varepsilon_2)(1+\xi^2+\xi^4)|\hat{\theta}_2|^2. 
\end{align*}
We choose our constants as follows, 
\[
\varepsilon_1 < \frac{\rho_2 l^2}{2 \rho_1}, \quad
\varepsilon_2 < \frac{m_2}{2s_1} \quad \text{and} \quad
\varepsilon_3 < \min\left\lbrace \frac{2\delta}{3l}, \frac{\varepsilon_2}{8l},\frac{2bl}{k}\left(\frac{m_2}{2}-s_1\varepsilon_2\right)\right\rbrace.
\]
Consequently, we deduce that there exist $M>0$, such that
\begin{align*}
\frac{d}{dt}\LL_1(\xi,t) &+M\xi^2 \left\lbrace b \xi^2|\hat{\psi}|^2 + k|i\xi \hat{\varphi} - \hat{\psi} -l \hat{\omega }|^2 +\rho_2|\hat{\psi}_t|^2+k_0|i\xi \hat{\omega } - l \hat{\varphi}|^2 + \rho_1|\hat{\varphi}_t|^2+\rho_1|\hat{\omega}_t|^2\right\rbrace \\
&\leq C(\varepsilon_1,\varepsilon_2,\varepsilon_3)(1+\xi^2+\xi^4+\xi^6)\xi^2|\hat{\theta}_1|^2 +C(\varepsilon_1,\varepsilon_2)(1+\xi^2+\xi^4)|\hat{\theta}_2|^2. 
\end{align*}
At last, we assume that $\frac{\rho_1}{\rho_2} \neq \frac{k}{b}$ or $k\neq k_0$. Consider the functional 
\[
P_1 =  \frac{\xi^2}{1+\xi^2+\xi^4} \lambda_1 \varepsilon_3 J_1(\xi,t)+\frac{\xi^4}{(1+\xi^2+\xi^4)^2}\lambda_2 \varepsilon_3 K(\xi,t).
\]
By Lemmas $\ref{lem2}$ and $\ref{lem6}$ and by using Young inequality, it follows that
\begin{align*}
\frac{d}{dt} P_1(\xi,t) &+\left(\frac{\rho_2l^2}{\rho_1}-3\varepsilon_1\right) \frac{\lambda_2\varepsilon_3b\xi^6}{(1+\xi^2+\xi^4)^2} |\hat{\psi}|^2 + \frac{\varepsilon_3\lambda_2k\xi^4}{4(1+\xi^2+\xi^4)^2}|i\xi \hat{\varphi} - \hat{\psi} -l \hat{\omega }|^2 \notag \\
&+\left(\frac{\lambda_1m_2}{2}-C(\varepsilon_4,\lambda_2)\right)\frac{\rho_2\varepsilon_3\xi^4}{(1+\xi^2+\xi^4)}|\hat{\psi}_t|^2 \\
\leq &\frac{\lambda_2\varepsilon_3\varepsilon_4 \xi^4|\hat{\varphi}_t|^2}{(1+\xi ^2+\xi^4)^2}  +  C(\varepsilon_1,\lambda_2)\varepsilon_3\frac{(1+\xi^2)\xi^4|i\xi\hat{\omega}-l\hat{\varphi}|^2}{(1+\xi^2+\xi^4)^2} +C(\varepsilon_1,\varepsilon_3,\lambda_1, \lambda_2)(1+ \xi^2)\xi^2\left(|\hat{\theta}_1|^2+ |\hat{\theta}_2|\right).
\end{align*}
In the last estimate we used the following inequalities:
\begin{equation}\label{ert}
\frac{1}{(1+\xi^2+\xi^4)}\leq 1, \quad \frac{1+\xi^4}{(1+\xi^2+\xi^4)}\leq 1, \quad \frac{\xi^2}{(1+\xi^2+\xi^4)}\leq 1.
\end{equation}
On the other hand, consider the functional  
\[
P_2 =  \frac{\xi^4}{(1+\xi^2+\xi^4)^2}\left( J_2(\xi,t)+\varepsilon_3 H(\xi,t)\right).
\]
By (\ref{e43}) in Lemma $\ref{lem3}$, Lemma $\ref{lem7}$, Young inequality and (\ref{ert}), we obtain
\begin{align*}
\frac{d}{dt}P_2 &+ k_0\left(\delta- C(k,k_0)\varepsilon_3\right)\frac{(1+\xi^2)\xi^4}{(1+\xi^2+\xi^4)^2}|i\xi \hat{\omega } - l \hat{\varphi}|^2 + 
\frac{\rho_1\varepsilon_3 l \xi^4}{2(1+\xi^2+\xi^4)^2}|\hat{\varphi}_t|^2+\frac{\rho_1\varepsilon_3 l\xi^4}{4(1+\xi^2+\xi^4)^2}|\hat{\omega}_t|^2 \notag \\
&\leq \frac{\rho_1 \varepsilon_3 \xi^4}{2l(1+\xi^2+\xi^4)}|\hat{\psi}_t|^2  +C(k,k_0)\varepsilon_3\frac{\xi^4}{(1+\xi^2+\xi^4)^2}|i\xi \hat{\varphi} -\hat{\psi} -l \hat{\omega }|^2 +C(\varepsilon_3)(1+\xi^2)\xi^2|\hat{\theta}_1|^2.
\end{align*}
Thus, Adding above estimates of $P_1$ and $P_2$, we get  
\begin{align*}
\frac{d}{dt}\LL_1(\xi,t) &+\left(\frac{\rho_2l^2}{\rho_1}-3\varepsilon_1\right)\frac{\lambda_2\varepsilon_3b\xi^6}{(1+\xi^2+\xi^4)^2}|\hat{\psi}|^2 + \left(\frac{\lambda_2}{4}-C(k,k_0)\right)\frac{\varepsilon_3k\xi^4}{(1+\xi^2+\xi^4)^2}|i\xi \hat{\varphi} - \hat{\psi} -l \hat{\omega }|^2  \\
&+\left(\frac{\lambda_1m_2}{2}-C(\varepsilon_4,\lambda_2)-\frac{\rho_1}{2l\rho_2}\right)\frac{\rho_2\varepsilon_3\xi^4}{(1+\xi^2+\xi^4)}|\hat{\psi}_t|^2  +\frac{\rho_1 \varepsilon_3l\xi^4}{4(1+\xi^2+\xi^4)^2}|\hat{\omega}_t|^2 \notag\\
&+\left(\delta-C(k,k_0,\varepsilon_1,\lambda_2)\varepsilon_3\right)\frac{k_0(1+\xi^2)\xi^4}{(1+\xi^2+\xi^4)^2}|i\xi \hat{\omega } - l \hat{\varphi}|^2 + 
\left(\frac{l}{2}-\frac{\varepsilon_4\lambda_2}{\rho_1}\right)\frac{\rho_1\varepsilon_3\xi^4}{(1+\xi^2+\xi^4)^2}|\hat{\varphi}_t|^2\notag \\
&\leq C(\varepsilon_1,\varepsilon_3,\lambda_1,\lambda_2)(1+\xi^2)\xi^2\left(|\hat{\theta}_1|^2 +|\hat{\theta}_2|^2\right)
\end{align*}
We choose our constants as follows:
\[
\varepsilon_1 < \frac{\rho_2 l^2}{3\rho_1}, \quad
\lambda_2 > 4C(k,k_0), \quad
\varepsilon_4 < \frac{\rho_1 l}{2\lambda_2}, \quad
\lambda_1 > \frac{\rho_1+2l\rho_2C(\varepsilon_4,\lambda_2) }{m_2l \rho_2} \quad \text{and} \quad
\varepsilon_3 < \frac{\delta}{C(k,k_0,\varepsilon_2,\lambda_2)}
\]
Consequently, by using (\ref{ert}), we deduce that there exist $M'>0$, such that
\begin{align*}
\frac{d}{dt}\LL_1(\xi,t) &+M'\frac{\xi^4}{(1+\xi^2+\xi^4)^2} \left\lbrace b \xi^2|\hat{\psi}|^2 + k|i\xi \hat{\varphi} - \hat{\psi} -l \hat{\omega }|^2 +\rho_2|\hat{\psi}_t|^2+k_0|i\xi \hat{\omega } - l \hat{\varphi}|^2 + \rho_1|\hat{\varphi}_t|^2+\rho_1|\hat{\omega}_t|^2\right\rbrace \\
&\leq C(\varepsilon_1,\varepsilon_3,\lambda_1,\lambda_2)(1+\xi^2)\xi^2\left(|\hat{\theta}_1|^2+|\hat{\theta}_2|^2 \right)
\end{align*}
\end{proof}

With the Proposition \ref{lem8} in hands and making some appropriate combinations, we build a Lyapunov functional $\LL$, which  plays a crucial role in the proof of our main result. 
\begin{thm}\label{teo1}
For any $t \geq 0$ and $\xi \in \R$, we obtain the following decay rates
\begin{equation}\label{eq27'}
\hat{E}(\xi,t) \leq \left\lbrace\begin{tabular}{l l}
$C \hat{E}(\xi,0) e^{-\beta s_1(\xi)t}$, & if $\frac{\rho_1}{\rho_2} =\frac{k}{b}$ and $k=k_0$, \\
$C' \hat{E}(\xi,0) e^{-\beta' s_2(\xi)t}$, & if $\frac{\rho_1}{\rho_2} \neq\frac{k}{b}$ or $k\neq k_0,$
\end{tabular}\right.
\end{equation}
where $C, \beta, C',\beta'$ are positive constants and 
\begin{equation*}
s_1(\xi)=\dfrac{\xi^4}{\left(1+\xi^2+\xi^4+\xi^6+\xi^8\right)}, \quad s_2(\xi)=\dfrac{\xi^4}{\left(1+\xi^2\right)\left(1+\xi^2+\xi^4\right)^2}.
\end{equation*}
\end{thm}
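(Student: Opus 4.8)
The plan is to construct a single Lyapunov functional $\LL(\xi,t)$ of the form $\LL = N\hat{E} + \LL_1$ (in the equal-speeds case) or $\LL = N\hat{E} + \LL_1$ with the frequency-weighted $\LL_1$ (in the non-equal-speeds case), where $N$ is a large constant to be chosen. The point of adding the large multiple of the energy $\hat{E}$ is twofold: first, Lemma \ref{lem1} shows $\frac{d}{dt}\hat{E} = -2\gamma\xi^2(\frac{k_1}{m_1}|\hat{\theta}_1|^2 + \frac{k_2}{m_2}|\hat{\theta}_2|^2)$, so a large multiple of $\hat{E}$ produces a large negative term in $\xi^2(|\hat{\theta}_1|^2+|\hat{\theta}_2|^2)$ which, after multiplying by the appropriate frequency weight, will absorb the right-hand sides of \eqref{e24} and \eqref{e24'}; second, once the combined functional has a nonpositive derivative controlled by (a weight times) $\hat{E}$, we get the exponential decay. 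Concretely, in the equal-speeds case the right-hand side of \eqref{e24} is bounded by $C(1+\xi^2+\xi^4+\xi^6)\xi^2(|\hat{\theta}_1|^2+|\hat{\theta}_2|^2)$; dividing the whole inequality \eqref{e24} by $(1+\xi^2+\xi^4+\xi^6)$, the $\hat{\theta}$-terms become $\le C\xi^2(|\hat\theta_1|^2+|\hat\theta_2|^2)$, which $\frac{d}{dt}(N\hat E)$ kills for $N$ large.

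The key steps, in order: (i) Define $\tilde{\LL}_1 = \frac{1}{1+\xi^2+\xi^4+\xi^6}\LL_1$ in case one (so its derivative satisfies \eqref{e24} divided through by that polynomial), and $\LL_1$ directly in case two; then set $\LL = N\hat{E} + \tilde{\LL}_1$ resp. $N\hat{E} + \LL_1$. (ii) Add $N$ times \eqref{e7} to the (rescaled) inequality from Proposition \ref{lem8}; choosing $N$ large enough that $N \cdot 2\gamma\min(k_1/m_1,k_2/m_2)$ dominates the constant $C$ in front of the $\xi^2|\hat\theta_j|^2$ terms, we obtain
\begin{align*}
\frac{d}{dt}\LL(\xi,t) + c\, s_j(\xi)\,\hat{E}(\xi,t) \leq 0,
\end{align*}
for a constant $c>0$, where $s_1,s_2$ are as in the statement; here one uses that all the dissipative terms collected in \eqref{e24}/\eqref{e24'} — namely $b\xi^2|\hat\psi|^2$, $k|i\xi\hat\varphi-\hat\psi-l\hat\omega|^2$, $\rho_2|\hat\psi_t|^2$, $k_0|i\xi\hat\omega-l\hat\varphi|^2$, $\rho_1|\hat\varphi_t|^2$, $\rho_1|\hat\omega_t|^2$ — together with the $|\hat\theta_1|^2,|\hat\theta_2|^2$ terms from $\hat E$ reconstitute (a multiple of) $\hat E$ itself, up to the frequency weight $s_j(\xi)$. (iii) Prove the equivalence $\LL(\xi,t) \sim \hat{E}(\xi,t)$ uniformly in $\xi$: each building block $J_1,J_2,K,H$ is bounded by $C(1+\xi^2+\dots)\hat E$ via Young's inequality on its defining bilinear expression, and after the frequency rescaling in the definition of $\tilde\LL_1$ these are all $\le C\hat E$; choosing $N$ even larger ensures $\frac{N}{2}\hat E \le \LL \le 2N\hat E$. (iv) Combine (ii) and (iii): $\frac{d}{dt}\LL \le -c\,s_j(\xi)\hat E \le -\frac{c}{2N}s_j(\xi)\LL$, so Gronwall gives $\LL(\xi,t)\le \LL(\xi,0)e^{-\frac{c}{2N}s_j(\xi)t}$, and the equivalence converts this back to the claimed estimate on $\hat E$ with $\beta = c/(2N)$, $C = 4$, etc.

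The main obstacle is step (iii), the uniform-in-$\xi$ equivalence of $\LL$ with $\hat{E}$ — in particular checking that the frequency weights built into $\LL_1$ in the non-equal-speeds case, namely the factors $\frac{\xi^2}{1+\xi^2+\xi^4}$ and $\frac{\xi^4}{(1+\xi^2+\xi^4)^2}$, are exactly tuned so that $|\LL_1| \le C\hat E$ holds for \emph{all} $\xi$ (small $\xi$, where the weights vanish and everything is trivially fine, versus large $\xi$, where one needs the weights to beat the $\xi$-powers generated by Young's inequality on terms like $Re(i\rho_2\xi\hat\psi_t\overline{\hat\theta_2})$ or $Re(i\rho_1\xi\hat\omega_t\overline{(i\xi\hat\omega-l\hat\varphi)})$, which carry up to $\xi^2$ and $\xi^3$). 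A secondary subtlety is verifying that after dividing \eqref{e24} by $(1+\xi^2+\xi^4+\xi^6)$ the \emph{left-hand} dissipative terms still add up to at least $c\,s_1(\xi)\hat E$ — this forces the denominator in $s_1(\xi)$ to be $1+\xi^2+\xi^4+\xi^6+\xi^8$ rather than degree $6$, because the energy contains the term $b\xi^2|\hat\psi|^2$ while \eqref{e24} only controls $\xi^4|\hat\psi|^2$ on its left side (one factor $\xi^2$ short), and similarly one must track which of the six dissipative quantities is the ``worst'' relative to its counterpart in $\hat E$. Once the bookkeeping of these frequency weights is done, the rest is a routine largeness argument for $N$ followed by Gronwall.
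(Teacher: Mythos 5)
Your overall architecture --- combine $\LL_1$ with a large multiple of $\hat E$, use Lemma \ref{lem1} to absorb the $\hat\theta$-terms on the right of Proposition \ref{lem8}, prove equivalence of the Lyapunov functional with (a weight times) $\hat E$, and close with Gronwall --- is exactly the paper's. But the specific frequency weights you choose break the absorption step, and this is not a bookkeeping detail: it is the reason the paper's functional has the form $\xi^2\LL_1+N(1+\xi^2+\xi^4+\xi^6+\xi^8)\hat E$ rather than yours. Concretely, your claim that the right-hand side of \eqref{e24} is bounded by $C(1+\xi^2+\xi^4+\xi^6)\xi^2(|\hat\theta_1|^2+|\hat\theta_2|^2)$ is false near $\xi=0$: the actual right-hand side contains the term $C(1+\xi^2+\xi^4)|\hat\theta_2|^2$ with \emph{no} factor of $\xi^2$ (it arises from Young's inequality applied to the cross term $k|\xi||\hat\theta_2||i\xi\hat\varphi-\hat\psi-l\hat\omega|$ of Lemma \ref{lem2}, which costs $C|\hat\theta_2|^2$). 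After your division by $1+\xi^2+\xi^4+\xi^6$ this term is still of order $|\hat\theta_2|^2$ as $\xi\to0$, whereas the only negative $\hat\theta_2$-contribution available, namely $-2\gamma N\xi^2\tfrac{k_2}{m_2}|\hat\theta_2|^2$ from \eqref{e7}, vanishes like $\xi^2$ there; no choice of $N$ makes the derivative of your functional nonpositive at low frequency. The paper's remedy is to \emph{multiply} $\LL_1$ by $\xi^2$ (supplying the missing factor in front of $|\hat\theta_2|^2$) and to weight the energy by $1+\cdots+\xi^8$ so that $N$ times \eqref{e7} still dominates at high frequency; that degree-8 weight on $\hat E$ is then precisely what produces the denominator of $s_1(\xi)$ through the equivalence \eqref{e29}.

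The same defect appears at the opposite end of the spectrum in your second case: with $\LL=N\hat E+\LL_1$ and no weight on $\hat E$, the right-hand side of \eqref{e24'} grows like $C\xi^4(|\hat\theta_1|^2+|\hat\theta_2|^2)$ as $|\xi|\to\infty$, while $N$ times \eqref{e7} only supplies $-2\gamma\eta N\xi^2(\cdots)$, so the absorption fails for large $\xi$ no matter how large $N$ is; the paper's weight $N'(1+\xi^2)\hat E$ is what both fixes this and accounts for the extra factor $(1+\xi^2)$ in the denominator of $s_2(\xi)$. (A minor further remark: your ``secondary subtlety'' about $b\xi^2|\hat\psi|^2$ being a factor $\xi^2$ short is not an issue --- the bracket in \eqref{e24} already contains the energy term $b\xi^2|\hat\psi|^2$ verbatim, so the six dissipated quantities are exactly $\hat E$ minus the two $\hat\theta$-terms; the degree-8 denominator of $s_1$ comes from the weight on $\hat E$, not from $\hat\psi$.) To repair the argument you would have to replace your weights by the paper's, at which point your proof coincides with the paper's proof of Theorem \ref{teo1}.
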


\begin{proof}
Consider the following Lyapunov functional:
\begin{equation}\label{f1}
\LL(\xi,t) = \left\lbrace\begin{tabular}{l l}
$\xi ^2\LL_1(\xi,t)+N(1+\xi^2+\xi^4+\xi^6+\xi^8)\hat{E}(\xi,t)$, & if $\frac{\rho_1}{\rho_2} =\frac{k}{b}$ and $k=k_0$, \\
$\LL_1(\xi,t)+N'(1+\xi^2)\hat{E}(\xi,t)$, & if $\frac{\rho_1}{\rho_2} \neq \frac{k}{b}$ or $k\neq k_0$.
\end{tabular}\right.
\end{equation}
where $N$ and $N'$ are positive constants to be fixed later.
First, we suppose that $\frac{\rho_1}{\rho_2} =\frac{k}{b}$ and $k=k_0$.  By Lemma $\ref{lem1}$ and Proposition $\ref{lem8}$,
\begin{align*}
\frac{d}{dt}\LL(\xi,t) &\leq - M\xi^4 \left\lbrace b \xi^2|\hat{\psi}|^2 + k|i\xi \hat{\varphi} - \hat{\psi} -l \hat{\omega }|^2 +\rho_2|\hat{\psi}_t|^2+k_0|i\xi \hat{\omega } - l \hat{\varphi}|^2 + \rho_1|\hat{\varphi}_t|^2+\rho_1|\hat{\omega}_t|^2\right\rbrace \\
&-\left(2\gamma \eta N-C(\varepsilon_1,\varepsilon_2,\varepsilon_3)\right)(1+\xi^2+\xi^4+\xi^6+\xi^8)\xi^2\left(|\hat{\theta}_1|^2 +|\hat{\theta}_2|^2 \right),
\end{align*}
where $\eta = \min\{\frac{k_1}{m_1},\frac{k_2}{m_2}\}$. On the other hand, by definition of $\LL_1$, there exist $M_1 >0$, such that
\[
|\xi^2\LL_1(\xi,t)| \leq M_1 (1+\xi^2+\xi^4+\xi^6) \hat{E}(\xi,t).
\]
It follows that 
\begin{equation}\label{e29}
(N-M_1)(1+\xi^2+\xi^4+\xi^6+\xi^8)\hat{E}(\xi,t) \leq \LL(\xi,t)\leq (N+M_1)(1+\xi^2+\xi^4+\xi^6+\xi^8) \hat{E}(\xi,t).
\end{equation}
Choosing $N > \max \left(M_1, \dfrac{C(\varepsilon_1,\varepsilon_2,\varepsilon_3)}{2\gamma \eta}\right)$ and by using 
\[
(1+\xi^2+\xi^4+\xi^6+\xi^8) \geq \xi^2,
\]
there exist $M_2>0$, such that 
\begin{equation}\label{e28}
\frac{d}{dt}\LL(\xi,t) \leq - M_2\xi^4 \hat{E}(\xi,t).
\end{equation}
Note that, $(\ref{e29})$ implies that
\begin{equation}\label{e30}
\frac{d}{dt}\LL(\xi,t) \leq - \beta\frac{\xi^4}{(1+\xi^2+\xi^4+\xi^6+\xi^8)} \LL(\xi,t),
\end{equation}
where $\beta=\dfrac{M_2}{N+M_1}$. By using Gronwall inequality, it follows that
\begin{equation}\label{e31}
\LL(\xi,t) \leq \LL(\xi,0) e^{-\beta s_1(\xi)t}.
\end{equation}
Now, from $(\ref{e29})$, it yields that 
$$\hat{E}(\xi,t)  \leq C \hat{E}(\xi,0) e^{-\beta s_1(\xi) t},$$
where $C= \frac{N+M_1}{N-M_1} > 0$.\\
\vglue 0.3cm

\noindent At last, we assume that $\frac{\rho_1}{\rho_2} \neq \frac{k}{b}$ or $k\neq k_0$. By Lemma $\ref{lem1}$ and Proposition  $\ref{lem8}$,
\begin{align*}
\frac{d}{dt}\LL(\xi,t) &\leq - M'\frac{\xi^4}{(1+\xi^2+\xi ^4)^2} \left\lbrace b \xi^2|\hat{\psi}|^2 + k|i\xi \hat{\varphi} - \hat{\psi} -l \hat{\omega }|^2 +\rho_2|\hat{\psi}_t|^2+k_0|i\xi \hat{\omega } - l \hat{\varphi}|^2 + \rho_1|\hat{\varphi}_t|^2+\rho_1|\hat{\omega}_t|^2\right\rbrace \\
&-\left(2\gamma \eta N'-C(\varepsilon_1,\varepsilon_3,\lambda_1,\lambda_2)\right)(1+\xi^2)\xi^2\left(|\hat{\theta}_1|^2 +|\hat{\theta}_2|^2 \right)
\end{align*}
On the other hand, by definition of $\LL_2$ and by using Young inequality, there exist $M'_1 >0$ such that
\begin{equation}\label{e29'}
(N'-M'_1)(1+\xi^2)\hat{E}(\xi,t) \leq \LL(\xi,t)\leq (N'+M'_1)(1+\xi^2) \hat{E}(\xi,t).
\end{equation}
Choosing $N' > \max \left(M'_1, \dfrac{C(\varepsilon_1,\varepsilon_3,\lambda_1,\lambda_2)}{2\gamma \eta}\right)$ and by using 
\[\
(1+\xi^2) \geq \frac{\xi^2}{(1+\xi^2+\xi^4)^2}, 
\]
there exist $M'_2>0$, such that 
\begin{equation}\label{e28'}
\frac{d}{dt}\LL(\xi,t) \leq - M'_2\frac{\xi^4}{(1+\xi^2+\xi^4)^2} \hat{E}(\xi,t).
\end{equation}
From $(\ref{e29'})$, we get 
\begin{equation}\label{e30'}
\frac{d}{dt}\LL(\xi,t) \leq - \beta'\frac{\xi^4}{(1+\xi^2)(1+\xi^2+\xi^4)^2}\LL(\xi,t),
\end{equation}
where $\beta'=\dfrac{M'_2}{N'+M'_1}$. By using Gronwall inequality, we conclude that 
\begin{equation}\label{e31'}
\LL(\xi,t) \leq \LL(\xi,0) e^{-\beta' s_2(\xi)t}.
\end{equation}
The last inequality together with $(\ref{e29'})$ leads to the second inequality of theorem, which completes the proof.
\end{proof}


\subsection{Thermoelastic Bresse system of type III}

In this subsection, we establish decay rates for the Fourier image of the solutions of Thermoelastic Bresse system of Type III. Taking Fourier Transform in (\ref{ee2}), we obtain the following ODE system:
\begin{align}
\rho_1\hat{\varphi} _{tt}-ik\xi \left ( i\xi \hat{\varphi} - \hat{\psi} -l \hat{\omega }  \right ) -k_0l\left ( i\xi \hat{\omega } - l \hat{\varphi} \right ) +l\gamma \hat{\theta}_{1t} &= 0 \quad \text{in $\R \times (0,\infty)$}, \label{eq1}\\
\rho_2\hat{\psi}_{tt}+b\xi ^{2}\hat{\psi }- k\left ( i\xi \hat{\varphi} - \hat{\psi} -l \hat{\omega }  \right )+i\gamma \xi  \hat {\theta}_{2t}&=0 \quad \text{in $\R \times (0,\infty)$},\label{eq2}\\
\rho_1\hat{\omega}_{tt}-ik_0\xi \left ( i\xi \hat{\omega } - l \hat{\varphi} \right )- kl\left ( i\xi \hat{\varphi} - \hat{\psi} -l \hat{\omega }  \right )+ i\gamma \xi \hat{\theta}_{1t}&=0 \quad \text{in $\R \times (0,\infty)$},\label{eq3}\\
\hat{\theta}_{1tt} +k_1\xi ^{2}\hat{\theta}_1+\alpha_1\xi ^{2}\hat{\theta}_{1t}+m_1 \left ( i\xi \hat{\omega } - l \hat{\varphi} \right )_t&=0 \quad \text{in $\R \times (0,\infty)$},\label{eq4} \\
\hat{\theta}_{2tt} +k_2\xi ^{2}\hat{\theta}_2+\alpha_2\xi ^{2}\hat{\theta}_{2t}+im_2 \xi \hat{\psi}_{t} &=0 \quad \text{in $\R \times (0,\infty)$}.\label{eq5}
\end{align}
The energy functional associated to the above system is defined as:
\begin{multline}\label{energytypeIII}
\hat{\E}\left ( \xi,t  \right )=\rho_1|\hat{\varphi}_{t} |^{2}+\rho_2|\hat{\psi}_{t}|^{2}+\rho_1 |\hat{\omega}_{t}|^{2}+\frac{\gamma}{m_1}|\hat{\theta}_{1t} |^{2}+\frac{k_1\gamma}{m_1}\xi^2|\hat{\theta}_1 |^{2}+\frac{\gamma}{m_2}|\hat{\theta}_{2t} |^{2}+\frac{k_2\gamma}{m_2}\xi^2|\hat{\theta}_2 |^{2}+b| \xi|^{2}|\hat{\psi}|^{2} \\
+k|i\xi \hat{\varphi} - \hat{\psi} -l \hat{\omega }|^{2} + k_0|i\xi \hat{\omega } - l \hat{\varphi}|^{2}
\end{multline}

\begin{lem}\label{lemq1}
Let $\hat{\E}$ the energy functional associated to the system \eqref{eq1}-\eqref{eq5}. Then, 
\begin{gather}\label{eq7}
\frac{d}{dt}\hat{\E}(\xi,t)=-2\gamma\xi^2\left(\frac{\alpha_1}{m_1}|\hat{\theta}_{1t}|^2+\frac{\alpha_2}{m_2}|\hat{\theta}_{2t}|^2\right).
\end{gather}
\end{lem}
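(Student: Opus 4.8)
\textbf{Proof plan for Lemma \ref{lemq1}.} The plan is to mimic exactly the computation already carried out for Lemma \ref{lem1} in the Type I case: differentiate the energy functional $\hat{\E}(\xi,t)$ defined in \eqref{energytypeIII} term by term, substitute the evolution equations \eqref{eq1}--\eqref{eq5}, and watch everything cancel except for the two parabolic dissipation terms coming from the $\alpha_1\xi^2\hat\theta_{1t}$ and $\alpha_2\xi^2\hat\theta_{2t}$ contributions. Concretely, I would multiply \eqref{eq1} by $\overline{\hat\varphi}_t$, \eqref{eq2} by $\overline{\hat\psi}_t$, \eqref{eq3} by $\overline{\hat\omega}_t$, \eqref{eq4} by $\tfrac{\gamma}{m_1}\overline{\hat\theta_{1t}}$, and \eqref{eq5} by $\tfrac{\gamma}{m_2}\overline{\hat\theta_{2t}}$, then add all five identities and take the real part.

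The bookkeeping proceeds as follows. From the first three (mechanical) equations, $\mathrm{Re}(\rho_1\hat\varphi_{tt}\overline{\hat\varphi}_t)=\tfrac{\rho_1}{2}\tfrac{d}{dt}|\hat\varphi_t|^2$ and similarly for $\hat\psi,\hat\omega$, producing the kinetic part of $\hat\E$; the stiffness couplings $-ik\xi(i\xi\hat\varphi-\hat\psi-l\hat\omega)$, $-k_0l(i\xi\hat\omega-l\hat\varphi)$ etc. combine, using $(i\xi\hat\varphi-\hat\psi-l\hat\omega)_t=i\xi\hat\varphi_t-\hat\psi_t-l\hat\omega_t$ and $(i\xi\hat\omega-l\hat\varphi)_t=i\xi\hat\omega_t-l\hat\varphi_t$, to give the time derivatives of $b|\xi|^2|\hat\psi|^2$, $k|i\xi\hat\varphi-\hat\psi-l\hat\omega|^2$, $k_0|i\xi\hat\omega-l\hat\varphi|^2$ (each with the conventional factor arising from the real part). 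The only new feature relative to Type I is the thermal coupling: the terms $l\gamma\,\mathrm{Re}(\hat\theta_{1t}\overline{\hat\varphi}_t)$, $\gamma\,\mathrm{Re}(i\xi\hat\theta_{2t}\overline{\hat\psi}_t)$, $\gamma\,\mathrm{Re}(i\xi\hat\theta_{1t}\overline{\hat\omega}_t)$ from the mechanical equations must be matched against the coupling terms $\tfrac{\gamma}{m_1}m_1\,\mathrm{Re}\big((i\xi\hat\omega-l\hat\varphi)_t\overline{\hat\theta_{1t}}\big)$ and $\tfrac{\gamma}{m_2}m_2\,\mathrm{Re}(i\xi\hat\psi_t\overline{\hat\theta_{2t}})$ from \eqref{eq4}--\eqref{eq5}; expanding $(i\xi\hat\omega-l\hat\varphi)_t=i\xi\hat\omega_t-l\hat\varphi_t$ shows these pair up and cancel exactly. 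From \eqref{eq4}, the remaining terms $\tfrac{\gamma}{m_1}\mathrm{Re}(\hat\theta_{1tt}\overline{\hat\theta_{1t}})=\tfrac{\gamma}{2m_1}\tfrac{d}{dt}|\hat\theta_{1t}|^2$ and $\tfrac{\gamma}{m_1}k_1\xi^2\,\mathrm{Re}(\hat\theta_1\overline{\hat\theta_{1t}})=\tfrac{k_1\gamma}{2m_1}\xi^2\tfrac{d}{dt}|\hat\theta_1|^2$ assemble the thermal energy in $\hat\E$, while $\tfrac{\gamma}{m_1}\alpha_1\xi^2|\hat\theta_{1t}|^2$ survives as dissipation; analogously for \eqref{eq5}. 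Collecting everything and multiplying through by the appropriate constant yields \eqref{eq7}.

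The argument is essentially routine — every step is a Leibniz differentiation or a cancellation forced by the structure of the coupling — so the only mild obstacle is organizational: keeping track of signs and the factors of $2$ coming from $\mathrm{Re}(z\bar z_t)=\tfrac12\tfrac{d}{dt}|z|^2$ versus $2\,\mathrm{Re}(z\bar w)$ cross terms, and verifying that all thermomechanical cross terms indeed cancel rather than leaving a residual. In particular one should double-check that the Type III couplings $l\gamma\hat\theta_{1t}$, $i\gamma\xi\hat\theta_{2t}$, $i\gamma\xi\hat\theta_{1t}$ in \eqref{eq1}--\eqref{eq3} are perfectly antisymmetric to the $m_j$-couplings in \eqref{eq4}--\eqref{eq5} after the normalizing weights $\gamma/m_j$ are applied; this is the one place where a sign error would corrupt the identity. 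Since no inequality or estimate is required here — only an exact identity — there is no genuine analytic difficulty, and the proof can be stated in a couple of sentences exactly as Lemma \ref{lem1} was.
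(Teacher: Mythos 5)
Your proposal is correct and is exactly the paper's proof: multiply \eqref{eq1}--\eqref{eq5} by $\overline{\hat\varphi}_t$, $\overline{\hat\psi}_t$, $\overline{\hat\omega}_t$, $\tfrac{\gamma}{m_1}\overline{\hat\theta_{1t}}$, $\tfrac{\gamma}{m_2}\overline{\hat\theta_{2t}}$ respectively, add, and take real parts. The cancellations and the overall factor of $2$ work out exactly as you describe.
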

\begin{proof}
Multiplying $(\ref{eq1})$ by $\overline{\hat{\varphi}}_t$, $(\ref{eq2})$ by $\overline{\hat{\psi}}_t$, $(\ref{eq3})$ by $\overline{\hat{\omega}}_t$, $(\ref{eq4})$ by $\frac{\gamma}{m_1}\overline{\hat{\theta}_{1t}}$, $(\ref{eq5})$ by $\frac{\gamma}{m_2}\overline{\hat{\theta}_{2t}}$,
adding these iqualities and taking the real part, (\ref{eq7}) follows.
\end{proof}
In order to establish the main result of this subsection and based in the approach done in the previous subsection, we establish the following  lemmas:
\begin{lem}\label{lem9}
The functional
\begin{equation*}
\J_1(\xi,t)=Re\left( i\rho_2\xi \hat{\psi}_t\overline{\hat{\theta}_{2t}}\right)+Re\left( ik_2\rho_2\xi^3 \hat{\psi}\overline{\hat{\theta}_{2}}\right),
\end{equation*}
satisfies
\begin{align}\label{eq9}
\frac{d}{dt}\J_1(\xi,t) + \frac{m_2\rho_2}{2}\xi^2|\hat{\psi}_t|^2 \leq k_2\rho_2|\xi|^3|\hat{\psi}||\hat{\theta}_{2t}| +b|\xi|^3|\hat{\psi}||\hat{\theta}_{2t}| + k|\xi||\hat{\theta}_{2t}| | i\xi \hat{\varphi} - \hat{\psi} -l \hat{\omega }|+  C_1(1+\xi^2)\xi^2|\hat{\theta}_{2t}|^2,
\end{align}
where $C_1$ is a positive constant.
\end{lem}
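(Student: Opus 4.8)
The plan is to follow the argument of Lemma~\ref{lem2}, modified to account for the fact that \eqref{eq5} is of second order in time. First I would multiply \eqref{eq5} by $-i\rho_2\xi\,\overline{\hat{\psi}_t}$ and take the real part. Three of the resulting contributions are routine: the term produced by $im_2\xi\hat{\psi}_t$ is exactly $m_2\rho_2\xi^2|\hat{\psi}_t|^2$, and the terms produced by $\hat{\theta}_{2tt}$ and by $k_2\xi^2\hat{\theta}_2$ are rewritten, via the product rule, as $\frac{d}{dt}Re\bigl(-i\rho_2\xi\,\overline{\hat{\psi}_t}\,\hat{\theta}_{2t}\bigr)+Re\bigl(i\rho_2\xi\,\overline{\hat{\psi}_{tt}}\,\hat{\theta}_{2t}\bigr)$ and $\frac{d}{dt}Re\bigl(-ik_2\rho_2\xi^3\,\overline{\hat{\psi}}\,\hat{\theta}_2\bigr)+Re\bigl(ik_2\rho_2\xi^3\,\overline{\hat{\psi}}\,\hat{\theta}_{2t}\bigr)$ respectively. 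It is precisely this last manipulation that forces the auxiliary summand $Re(ik_2\rho_2\xi^3\hat{\psi}\,\overline{\hat{\theta}_2})$ into the definition of $\J_1$, upon noting that $Re(-ik_2\rho_2\xi^3\overline{\hat{\psi}}\,\hat{\theta}_2)=Re(ik_2\rho_2\xi^3\hat{\psi}\,\overline{\hat{\theta}_2})$ since $\xi$ is real; the extra dissipation $\alpha_2\xi^2\hat{\theta}_{2t}$ leaves a remainder of the type $Re\bigl(i\alpha_2\rho_2\xi^3\,\overline{\hat{\psi}_t}\,\hat{\theta}_{2t}\bigr)$ (up to sign).

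Next I would substitute $\rho_2\hat{\psi}_{tt}=-b\xi^2\hat{\psi}+k\bigl(i\xi\hat{\varphi}-\hat{\psi}-l\hat{\omega}\bigr)-i\gamma\xi\hat{\theta}_{2t}$ from \eqref{eq2} into the remainder $Re\bigl(i\rho_2\xi\,\overline{\hat{\psi}_{tt}}\,\hat{\theta}_{2t}\bigr)$, which splits it into $-b\xi^3 Re\bigl(i\overline{\hat{\psi}}\,\hat{\theta}_{2t}\bigr)$, $k\xi\,Re\bigl(i\,\overline{(i\xi\hat{\varphi}-\hat{\psi}-l\hat{\omega})}\,\hat{\theta}_{2t}\bigr)$ and $-\gamma\xi^2|\hat{\theta}_{2t}|^2$. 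Collecting these identities and estimating by absolute values, I would obtain that $\frac{d}{dt}\J_1+m_2\rho_2\xi^2|\hat{\psi}_t|^2$ is bounded above by $b|\xi|^3|\hat{\psi}||\hat{\theta}_{2t}|+k|\xi||i\xi\hat{\varphi}-\hat{\psi}-l\hat{\omega}||\hat{\theta}_{2t}|+k_2\rho_2|\xi|^3|\hat{\psi}||\hat{\theta}_{2t}|+\alpha_2\rho_2|\xi|^3|\hat{\psi}_t||\hat{\theta}_{2t}|+\gamma\xi^2|\hat{\theta}_{2t}|^2$.

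Finally I would apply Young's inequality to the single term $\alpha_2\rho_2|\xi|^3|\hat{\psi}_t||\hat{\theta}_{2t}|$, absorbing $\tfrac{m_2\rho_2}{2}\xi^2|\hat{\psi}_t|^2$ into the left-hand side and leaving a residue of order $\xi^4|\hat{\theta}_{2t}|^2$; combined with $\gamma\xi^2|\hat{\theta}_{2t}|^2$ this is dominated by $C_1(1+\xi^2)\xi^2|\hat{\theta}_{2t}|^2$ for a suitable constant $C_1$, which is exactly \eqref{eq9}. Compared with Lemma~\ref{lem2} the only genuinely new features — hence the only places needing a little care — are the presence of $\hat{\theta}_{2tt}$ in \eqref{eq5}, which dictates the extra summand of $\J_1$, and of the dissipative term $\alpha_2\xi^2\hat{\theta}_{2t}$, which is the source of the $(1+\xi^2)$ weight on the last term of the right-hand side; no other obstacle is expected.
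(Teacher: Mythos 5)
Your proposal is correct and follows essentially the same route as the paper's own proof: multiply \eqref{eq5} by $-i\rho_2\xi\overline{\hat{\psi}_t}$, integrate the $\hat{\theta}_{2tt}$ and $k_2\xi^2\hat{\theta}_2$ contributions by parts in $t$ (which produces exactly the two summands of $\J_1$), substitute \eqref{eq2} for $\rho_2\hat{\psi}_{tt}$, and absorb the $\alpha_2\rho_2|\xi|^3|\hat{\psi}_t||\hat{\theta}_{2t}|$ term by Young's inequality. Your accounting of the resulting terms, including the source of the $(1+\xi^2)$ weight, matches the paper's estimate exactly.
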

\begin{proof}
Multiplying $(\ref{eq5})$ by $-i\rho_2\xi \overline{\hat{\psi}_t}$ and taking real part, we obtain
\begin{align*}
\frac{d}{dt}Re\left(-i\rho_2\xi \overline{\hat{\psi}_t}\hat{\theta}_{2t}\right)+Re\left(i\rho_2\xi \overline{\hat{\psi}_{tt}}\hat{\theta}_{2t}\right) -\frac{d}{dt}Re\left(ik_2\rho_2\xi^3 \overline{\hat{\psi}}\hat{\theta}_2\right)+Re\left(ik_2\rho_2\xi^3 \overline{\hat{\psi}}\hat{\theta}_{2t}\right)& \\
-Re\left(i\alpha_2\rho_2\xi^3 \overline{\hat{\psi}_t}\hat{\theta}_{2t}\right)+m_2\rho_2 \xi^2 |\hat{\psi}_{t}|^2 &=0.
\end{align*}
By $(\ref{eq2})$, it follows that
\begin{align*}
\frac{d}{dt}\J_1(\xi,t)+m_2\rho_2 \xi^2 |\hat{\psi}_{t}|^2&
\leq k_2\rho_2|\xi|^3 |\hat{\psi}||\hat{\theta}_{2t}|+\alpha_2\rho_2|\xi|^3 |\hat{\psi}_t||\hat{\theta}_{2t}|+b|\xi|^3 |\hat{\psi}| |\hat{\theta}_{2t}| + k|\xi||\hat{\theta}_{2t}|| i\xi \hat{\varphi} - \hat{\psi} -l \hat{\omega } | \\
&+\gamma \xi^2 |\hat {\theta}_{2t}|^2.
\end{align*}
Applying Young inequality, $(\ref{eq9})$ follows.
\end{proof}

\begin{lem}\label{lem10'}
The functional
\begin{equation*}
\T_1(\xi,t)=Re\left(-\rho_1\hat{\varphi}_t \overline{\left(i\xi \hat{\omega } - l \hat{\varphi}\right)} - \frac{\rho_1}{m_1}\hat{\varphi}_t\overline{\hat{\theta}_{1t}}\right),
\end{equation*}
satisfies
\begin{align}\label{eq10}
\frac{d}{dt}\T_1(\xi,t) +\frac{k_0l}{2}|i\xi \hat{\omega } - l \hat{\varphi}|^2 \leq &\frac{\alpha_1\rho_1}{m_1}|\xi|^2|\hat{\varphi}_t||\hat{\theta}_{1t}|-Re(ik\xi(i\xi\hat{\varphi}-\hat{\psi}-l\hat{\omega})\overline{\left(i\xi \hat{\omega } - l \hat{\varphi}\right)}) \notag \\
&-\frac{k}{m_1}Re(i\xi\overline{\hat{\theta}_{1t}}(i\xi\hat{\varphi}-\hat{\psi}-l\hat{\omega})) +\frac{\rho_1k_1}{m_1}Re\left(\xi^2\hat{\varphi}_t\overline{\hat{\theta}}_{1}\right) +C_2 |\hat{\theta}_{1t}|^2,
\end{align}
where $C_2$ is a positive constant.
\end{lem}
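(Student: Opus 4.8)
The plan is to follow the multiplier scheme of Lemma~\ref{lem3'}, adjusted for the hyperbolic heat law \eqref{eq4}. First I would multiply \eqref{eq1} by $-\overline{(i\xi\hat{\omega}-l\hat{\varphi})}$, take the real part, and write $-\rho_1 Re(\hat{\varphi}_{tt}\overline{(i\xi\hat{\omega}-l\hat{\varphi})})=\frac{d}{dt}\big[-\rho_1 Re(\hat{\varphi}_t\overline{(i\xi\hat{\omega}-l\hat{\varphi})})\big]+\rho_1 Re(\hat{\varphi}_t\overline{(i\xi\hat{\omega}-l\hat{\varphi})_t})$. The point where type~III differs from type~I is that \eqref{eq4} gives $m_1(i\xi\hat{\omega}-l\hat{\varphi})_t=-\hat{\theta}_{1tt}-k_1\xi^2\hat{\theta}_1-\alpha_1\xi^2\hat{\theta}_{1t}$, so the term $\rho_1 Re(\hat{\varphi}_t\overline{(i\xi\hat{\omega}-l\hat{\varphi})_t})$ carries the contribution $-\frac{\rho_1}{m_1}Re(\hat{\varphi}_t\overline{\hat{\theta}_{1tt}})$, which is not controlled by $\hat{\E}$; this is exactly what the second term $-\frac{\rho_1}{m_1}\hat{\varphi}_t\overline{\hat{\theta}_{1t}}$ in the definition of $\T_1$ is designed to absorb (the analogue of $-\frac{\rho_1}{m_1}\hat{\varphi}_t\overline{\hat{\theta}_1}$ in the type~I functional, with $\hat{\theta}_1$ replaced by $\hat{\theta}_{1t}$).

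Second I would multiply \eqref{eq1} by $-\frac{1}{m_1}\overline{\hat{\theta}_{1t}}$, take the real part, and split $-\frac{\rho_1}{m_1}Re(\hat{\varphi}_{tt}\overline{\hat{\theta}_{1t}})=\frac{d}{dt}\big[-\frac{\rho_1}{m_1}Re(\hat{\varphi}_t\overline{\hat{\theta}_{1t}})\big]+\frac{\rho_1}{m_1}Re(\hat{\varphi}_t\overline{\hat{\theta}_{1tt}})$. Adding the two resulting identities, the two occurrences of $\frac{\rho_1}{m_1}Re(\hat{\varphi}_t\overline{\hat{\theta}_{1tt}})$ cancel, the total-derivative terms combine into $\frac{d}{dt}\T_1(\xi,t)$, and one obtains an identity of the form $\frac{d}{dt}\T_1+k_0l|i\xi\hat{\omega}-l\hat{\varphi}|^2=R$, where $R$ collects exactly the four quantities that are retained on the right of \eqref{eq10}, namely $\frac{\rho_1\alpha_1}{m_1}\xi^2 Re(\hat{\varphi}_t\overline{\hat{\theta}_{1t}})$, $\frac{\rho_1 k_1}{m_1}Re(\xi^2\hat{\varphi}_t\overline{\hat{\theta}}_1)$, $-Re(ik\xi(i\xi\hat{\varphi}-\hat{\psi}-l\hat{\omega})\overline{(i\xi\hat{\omega}-l\hat{\varphi})})$ and $-\frac{k}{m_1}Re(i\xi\overline{\hat{\theta}_{1t}}(i\xi\hat{\varphi}-\hat{\psi}-l\hat{\omega}))$, together with the two cross terms $l\gamma\, Re(\hat{\theta}_{1t}\overline{(i\xi\hat{\omega}-l\hat{\varphi})})$ and $-\frac{k_0l}{m_1}Re((i\xi\hat{\omega}-l\hat{\varphi})\overline{\hat{\theta}_{1t}})$ and the term $\frac{l\gamma}{m_1}|\hat{\theta}_{1t}|^2$.

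Finally I would dispose of the two cross terms by Young's inequality: each is bounded by $\frac{k_0l}{4}|i\xi\hat{\omega}-l\hat{\varphi}|^2+C|\hat{\theta}_{1t}|^2$, so half of the good term $k_0l|i\xi\hat{\omega}-l\hat{\varphi}|^2$ survives on the left, giving $\frac{k_0l}{2}|i\xi\hat{\omega}-l\hat{\varphi}|^2$; absorbing all remaining $|\hat{\theta}_{1t}|^2$ contributions (the term $\frac{l\gamma}{m_1}|\hat{\theta}_{1t}|^2$ and the Young residues) into a single constant $C_2$ then yields \eqref{eq10}. I do not expect a genuine obstacle: the argument is the type~III counterpart of Lemma~\ref{lem3'}, and the only point that needs care is the sign bookkeeping that makes the two $\hat{\theta}_{1tt}$ terms cancel and leaves the $\xi^2\hat{\theta}_1$ and $\alpha_1\xi^2\hat{\theta}_{1t}$ terms coming from \eqref{eq4} in the precise form displayed on the right-hand side of \eqref{eq10}.
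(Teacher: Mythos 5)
Your proposal is correct and follows essentially the same route as the paper: the paper likewise multiplies \eqref{eq1} by $-\overline{(i\xi\hat{\omega}-l\hat{\varphi})}$, substitutes \eqref{eq4} for $(i\xi\hat{\omega}-l\hat{\varphi})_t$, splits $-\frac{\rho_1}{m_1}Re(\hat{\varphi}_t\overline{\hat{\theta}_{1tt}})$ by the product rule, and then eliminates the resulting $\frac{\rho_1}{m_1}Re(\hat{\varphi}_{tt}\overline{\hat{\theta}_{1t}})$ by substituting \eqref{eq1} back in, which is exactly your ``second multiplier'' identity in disguise. The bookkeeping of the retained terms and the Young-inequality treatment of the two cross terms match the paper's computation.
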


\begin{proof}
Multiplying (\ref{eq1}) by $-\overline{\left(i\xi \hat{\omega } - l \hat{\varphi}\right)}$ and taking real part, we have
\begin{align*}
\frac{d}{dt} Re(-\rho_1\hat{\varphi} _{t}\overline{\left(i\xi \hat{\omega } - l \hat{\varphi}\right)})+Re(\rho_1\hat{\varphi} _{t}\overline{\left(i\xi \hat{\omega } - l \hat{\varphi}\right)}_t)&+Re(ik\xi \left ( i\xi \hat{\varphi} - \hat{\psi} -l \hat{\omega }  \right)\overline{\left(i\xi \hat{\omega } - l \hat{\varphi}\right)}) \notag\\
&+k_0l|i\xi \hat{\omega } - l \hat{\varphi}|^2 -Re(l\gamma \hat{\theta}_{1t}\overline{\left(i\xi \hat{\omega } - l \hat{\varphi}\right)})= 0,
\end{align*}
by using (\ref{eq4}), we have
\begin{align*}
\frac{d Re}{dt}(-\rho_1\hat{\varphi} _{t}\overline{\left(i\xi \hat{\omega } - l \hat{\varphi}\right)})&-\frac{\rho_1}{m_1}\frac{d}{dt}Re\left(\hat{\varphi}_t\overline{\hat{\theta}}_{1t}\right)+\frac{\rho_1}{m_1}Re\left(\hat{\varphi}_{tt}\overline{\hat{\theta}}_{1t}\right) -\frac{\rho_1k_1}{m_1}Re\left(\xi^2\hat{\varphi}_t\overline{\hat{\theta}}_{1}\right)-\frac{\alpha_1\rho_1}{m_1}Re\left(\xi^2\hat{\varphi}_t\overline{\hat{\theta}}_{1t}\right) \notag \\
&+Re \left( ik\xi \left ( i\xi \hat{\varphi} - \hat{\psi} -l \hat{\omega }  \right)\overline{\left(i\xi \hat{\omega } - l \hat{\varphi}\right)}\right) +k_0l|i\xi \hat{\omega } - l \hat{\varphi}|^2 -Re(l\gamma \hat{\theta}_{1t}\overline{\left(i\xi \hat{\omega } - l \hat{\varphi}\right)})= 0.
\end{align*}
Note that, (\ref{eq1}) implies that
\begin{align*}
\frac{d}{dt}\T_1(\xi,t) &+ \frac{k}{m_1}Re\left( i\xi\overline{\hat{\theta}}_{1t}\left ( i\xi \hat{\varphi} - \hat{\psi} -l \hat{\omega }  \right )\right)  +Re \left( ik\xi \left ( i\xi \hat{\varphi} - \hat{\psi} -l \hat{\omega }  \right)\overline{\left(i\xi \hat{\omega } - l \hat{\varphi}\right)}\right)+k_0l|i\xi \hat{\omega } - l \hat{\varphi}|^2  \\
&-\frac{\rho_1k_1}{m_1}Re\left(\xi^2\hat{\varphi}_t\overline{\hat{\theta}}_{1}\right)-\frac{\alpha_1\rho_1}{m_1}Re\left(\xi^2\hat{\varphi}_t\overline{\hat{\theta}}_{1t}\right) \leq  \frac{l\gamma}{m_1} |\hat{\theta}_{1t} |^2+\left(l\gamma+\frac{k_0 l}{m_1}\right)|\hat{\theta}_{1t}|| i\xi \hat{\omega } - l \hat{\varphi}|.
\end{align*}
Applying Young inequality, we obtain (\ref{eq10}).
\end{proof}

\begin{lem}\label{lem10''}
The functional
\begin{equation*}
\T_2(\xi,t)=Re\left(i\rho_1\xi\hat{\omega}_t \overline{\left(i\xi \hat{\omega } - l \hat{\varphi}\right)} + i\frac{\rho_1}{m_1}\xi\hat{\omega}_t\overline{\hat{\theta}_{1t}}\right),
\end{equation*}
satisfies
\begin{align}\label{eq36}
\frac{d}{dt}\T_2(\xi,t) +\frac{k_0}{2}|\xi|^2|i\xi \hat{\omega } - l \hat{\varphi}|^2 \leq &\frac{\alpha_1\rho_1}{m_1}|\xi|^3|\hat{\omega}_t||\hat{\theta}_{1t}|+Re\left( ikl\xi(i\xi\hat{\varphi}-\hat{\psi}-l\hat{\omega})\overline{\left(i\xi \hat{\omega } - l \hat{\varphi}\right)}\right) \notag \\
&+\frac{kl}{m_1}Re\left( i\xi\overline{\hat{\theta}}_{1t}\left ( i\xi \hat{\varphi} - \hat{\psi} -l \hat{\omega }  \right )\right)-\frac{k_1\rho_1}{m_1}Re\left(i\xi^3\hat{\omega}_t\overline{\hat{\theta}}_{1}\right)+C_3 |\xi|^2|\hat{\theta}_{1t}|^2,
\end{align}
where $C_3$ is a positive constant.
\end{lem}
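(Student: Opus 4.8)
The plan is to reproduce, in the type~III setting, the argument that gave Lemma~\ref{lem3''}, the only change being that the single-speed heat equation \eqref{e4} is replaced by the hyperbolic heat equation \eqref{eq4}. The functional $\T_2$ is the sum of the two multipliers $i\rho_1\xi\hat{\omega}_t\,\overline{(i\xi\hat{\omega}-l\hat{\varphi})}$ and $\frac{i\rho_1}{m_1}\xi\hat{\omega}_t\,\overline{\hat{\theta}_{1t}}$ applied to \eqref{eq3}; I would differentiate each of them, that is, multiply \eqref{eq3} by $i\xi\,\overline{(i\xi\hat{\omega}-l\hat{\varphi})}$ and by $\frac{i\xi}{m_1}\overline{\hat{\theta}_{1t}}$ and take real parts. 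The first multiplication produces $\frac{d}{dt}Re(i\rho_1\xi\hat{\omega}_t\overline{(i\xi\hat{\omega}-l\hat{\varphi})})$, the good term $k_0\xi^2|i\xi\hat{\omega}-l\hat{\varphi}|^2$, the signed coupling term $-Re(ikl\xi(i\xi\hat{\varphi}-\hat{\psi}-l\hat{\omega})\overline{(i\xi\hat{\omega}-l\hat{\varphi})})$, a thermal term $-Re(\gamma\xi^2\hat{\theta}_{1t}\overline{(i\xi\hat{\omega}-l\hat{\varphi})})$ coming from the entry $i\gamma\xi\hat{\theta}_{1t}$ of \eqref{eq3}, and a ``bad'' term $-Re(i\rho_1\xi\hat{\omega}_t\,\overline{(i\xi\hat{\omega}-l\hat{\varphi})_t})$ that must be removed.

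The second multiplication produces $\frac{d}{dt}Re(\frac{i\rho_1}{m_1}\xi\hat{\omega}_t\overline{\hat{\theta}_{1t}})$, a term $-\frac{\rho_1}{m_1}Re(i\xi\hat{\omega}_t\overline{\hat{\theta}_{1tt}})$, the coupling term $-\frac{kl}{m_1}Re(i\xi(i\xi\hat{\varphi}-\hat{\psi}-l\hat{\omega})\overline{\hat{\theta}_{1t}})$, the term $\frac{k_0\xi^2}{m_1}Re((i\xi\hat{\omega}-l\hat{\varphi})\overline{\hat{\theta}_{1t}})$ and $-\frac{\gamma\xi^2}{m_1}|\hat{\theta}_{1t}|^2$. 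Here the key point is to eliminate $\hat{\theta}_{1tt}$ \emph{through} \eqref{eq4} rather than by differentiating $\overline{\hat{\theta}_{1t}}$: substituting $\hat{\theta}_{1tt}=-k_1\xi^2\hat{\theta}_1-\alpha_1\xi^2\hat{\theta}_{1t}-m_1(i\xi\hat{\omega}-l\hat{\varphi})_t$ rewrites $-\frac{\rho_1}{m_1}Re(i\xi\hat{\omega}_t\overline{\hat{\theta}_{1tt}})$ as $\frac{\rho_1k_1}{m_1}Re(i\xi^3\hat{\omega}_t\overline{\hat{\theta}_1})+\frac{\rho_1\alpha_1}{m_1}Re(i\xi^3\hat{\omega}_t\overline{\hat{\theta}_{1t}})+\rho_1 Re(i\xi\hat{\omega}_t\,\overline{(i\xi\hat{\omega}-l\hat{\varphi})_t})$, and the last summand cancels exactly the bad term of the first step.

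Adding the two identities, the left-hand sides merge into $\frac{d}{dt}\T_2(\xi,t)+k_0\xi^2|i\xi\hat{\omega}-l\hat{\varphi}|^2$. I would then keep unchanged the signed terms $Re(ikl\xi(i\xi\hat{\varphi}-\hat{\psi}-l\hat{\omega})\overline{(i\xi\hat{\omega}-l\hat{\varphi})})$, $\frac{kl}{m_1}Re(i\xi\overline{\hat{\theta}}_{1t}(i\xi\hat{\varphi}-\hat{\psi}-l\hat{\omega}))$ and $-\frac{k_1\rho_1}{m_1}Re(i\xi^3\hat{\omega}_t\overline{\hat{\theta}_1})$, since these are precisely the ones that telescope or cancel later when the combination $l\,\T_1+\T_2$ is formed, exactly as for $J_2=lT_1+T_2$ in Lemma~\ref{lem3}; estimate $\frac{\rho_1\alpha_1}{m_1}|Re(i\xi^3\hat{\omega}_t\overline{\hat{\theta}_{1t}})|\le\frac{\alpha_1\rho_1}{m_1}|\xi|^3|\hat{\omega}_t||\hat{\theta}_{1t}|$; and apply Young's inequality to the two remaining terms $\gamma\xi^2|\hat{\theta}_{1t}||i\xi\hat{\omega}-l\hat{\varphi}|$ and $\frac{k_0\xi^2}{m_1}|i\xi\hat{\omega}-l\hat{\varphi}||\hat{\theta}_{1t}|$ with weight $\tfrac{k_0}{4}$ each, so that $\tfrac{k_0}{2}\xi^2|i\xi\hat{\omega}-l\hat{\varphi}|^2$ remains on the left and all leftover quadratic thermal terms, including $\frac{\gamma}{m_1}\xi^2|\hat{\theta}_{1t}|^2$, collapse into $C_3\xi^2|\hat{\theta}_{1t}|^2$. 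This yields \eqref{eq36}.

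The only genuine subtlety, and the step I would handle most carefully, is the elimination of $\hat{\theta}_{1tt}$: it must be replaced via \eqref{eq4} so that the term $\rho_1 Re(i\xi\hat{\omega}_t\,\overline{(i\xi\hat{\omega}-l\hat{\varphi})_t})$ is generated with the sign needed to annihilate its counterpart from the first multiplier; differentiating $\overline{\hat{\theta}_{1t}}$ in place, or taking the two multipliers in the other order, would leave behind an uncontrolled $(i\xi\hat{\omega}-l\hat{\varphi})_t$ term. Everything else is routine, with the single constraint that one must not overspend the $k_0\xi^2|i\xi\hat{\omega}-l\hat{\varphi}|^2$ budget when applying Young's inequality.
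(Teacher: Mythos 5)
Your proposal is correct and follows essentially the same route as the paper: the paper likewise multiplies \eqref{eq3} by $i\xi\overline{\left(i\xi \hat{\omega } - l \hat{\varphi}\right)}$, uses \eqref{eq4} to trade $\left(i\xi \hat{\omega } - l \hat{\varphi}\right)_t$ for $\hat{\theta}_{1tt}$ plus lower-order thermal terms, rewrites the resulting $\hat{\theta}_{1tt}$ contribution as a time derivative minus an $\hat{\omega}_{tt}$ term, and then eliminates $\hat{\omega}_{tt}$ through \eqref{eq3} again — which is exactly your second multiplier identity read in the opposite order, producing the same cancellation of the $\left(i\xi \hat{\omega } - l \hat{\varphi}\right)_t$ term. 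Your accounting of the retained signed terms, the $\frac{\alpha_1\rho_1}{m_1}|\xi|^3|\hat{\omega}_t||\hat{\theta}_{1t}|$ bound, and the final Young step preserving $\frac{k_0}{2}\xi^2|i\xi \hat{\omega } - l \hat{\varphi}|^2$ all match the paper's derivation of \eqref{eq36}.
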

\begin{proof}
Multiplying (\ref{eq3}) by $i\xi\overline{\left(i\xi \hat{\omega } - l \hat{\varphi}\right)}$ and taking real part, 
\begin{align*}
\frac{d}{dt} Re\left(i\rho_1\xi\hat{\omega}_{t}\overline{\left(i\xi \hat{\omega } - l \hat{\varphi}\right)}\right)&-Re\left(i\rho_1\xi\hat{\omega}_{t}\overline{\left(i\xi \hat{\omega } - l \hat{\varphi}\right)}_t\right)+k_0|\xi|^2 | i\xi \hat{\omega } - l \hat{\varphi}|^2 \notag \\
&- Re\left(ikl\xi\left ( i\xi \hat{\varphi} - \hat{\psi} -l \hat{\omega }  \right )\overline{\left(i\xi \hat{\omega } - l \hat{\varphi}\right)}\right)- Re\left( \gamma \xi^2 \hat{\theta}_{1t}\overline{\left(i\xi \hat{\omega } - l \hat{\varphi}\right)}\right)=0 ,
\end{align*}
by using (\ref{eq4}), we have
\begin{align*}
\frac{d}{dt} Re\left(i\rho_1\xi\hat{\omega}_{t}\overline{\left(i\xi \hat{\omega } - l \hat{\varphi}\right)}\right)+\frac{\rho_1}{m_1}\frac{d}{dt}Re\left(i\xi\hat{\omega} _{t}\overline{\hat{\theta}}_{1t}\right)-\frac{\rho_1}{m_1}Re\left(i\xi\hat{\omega} _{tt}\overline{\hat{\theta}}_{1t}\right) +\frac{\rho_1k_1}{m_1}Re\left(i\xi ^3\hat{\omega} _{t}\overline{\hat{\theta}}_{1}\right)& \notag \\
+\frac{\alpha_1\rho_1}{m_1}Re\left(i\xi^3\hat{\omega} _{t}\overline{\hat{\theta}}_{1t}\right)+k_0|\xi|^2 | i\xi \hat{\omega } - l \hat{\varphi}|^2 - Re\left(ikl\xi\left ( i\xi \hat{\varphi} - \hat{\psi} -l \hat{\omega }  \right )\overline{\left(i\xi \hat{\omega } - l \hat{\varphi}\right)}\right)& \notag \\
 - Re\left( \gamma \xi^2 \hat{\theta}_{1t}\overline{\left(i\xi \hat{\omega } - l \hat{\varphi}\right)}\right)&=0. 
\end{align*}
Note that, (\ref{eq3}) implies that
\begin{align*}
\frac{d}{dt}\T_2(\xi,t) &-\frac{kl}{m_1}Re\left( i\xi\overline{\hat{\theta}}_{1t}\left ( i\xi \hat{\varphi} - \hat{\psi} -l \hat{\omega }  \right )\right)  -Re \left( ikl\xi \left ( i\xi \hat{\varphi} - \hat{\psi} -l \hat{\omega }  \right)\overline{\left(i\xi \hat{\omega } - l \hat{\varphi}\right)}\right)+k_0\xi^2|i\xi \hat{\omega } - l \hat{\varphi}|^2  \\
&+\frac{k_1\rho_1}{m_1}Re\left(i\xi^3\hat{\omega}_t\overline{\hat{\theta}}_{1}\right)+\frac{\alpha_1\rho_1}{m_1}Re\left(i\xi^3\hat{\omega}_t\overline{\hat{\theta}}_{1t}\right) \leq  \frac{\gamma\xi^2}{m_1} |\hat{\theta}_{1t} |^2+\left(\gamma+\frac{k_0 }{m_1}\right)\xi^2|\hat{\theta}_{1t}|| i\xi \hat{\omega } - l \hat{\varphi}|.
\end{align*}
Applying Young inequality, we obtain (\ref{eq36}).
\end{proof}

\begin{lem}\label{lem10}
Consider the functional 
\begin{equation*}
\J_2(\xi,t):=l\T_1(\xi,t)+\T_2(\xi,t)+\frac{\rho_1 k_1}{m_1}Re\left( \xi^2\overline{\hat{\theta}}_{1}\left(i\xi\hat{\omega}-l\hat{\varphi}\right)\right).
\end{equation*}
Then, there exist $\delta > 0$ such that 
\begin{align}\label{eq41}
\frac{d}{dt}\J_2(\xi,t)+k_0\delta| i\xi \hat{\omega } - l \hat{\varphi}|^2 \leq &\frac{\alpha_1\rho_1l}{m_1} |\xi|^2|\hat{\varphi}_t||\hat{\theta}_{1t}|+\frac{\alpha_1\rho_1}{m_1} |\xi|^3|\hat{\omega}_t||\hat{\theta}_{1t}| +C_4(1+\xi^2)|\hat{\theta}_{1t}|^2
\end{align}
where $C_3$ is a positive constant.
\end{lem}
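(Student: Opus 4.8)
The plan is to follow the same three-step pattern already used for Lemma~\ref{lem3} in the Type~I case: combine the two component functionals $\T_1$ and $\T_2$ (which together produce $k_0l|i\xi\hat\omega-l\hat\varphi|^2$ and $k_0\xi^2|i\xi\hat\omega-l\hat\varphi|^2$ on the left), introduce a corrector term to eliminate the cross terms still involving $\overline{\hat\theta}_1$ (not $\overline{\hat\theta}_{1t}$), and then absorb the two ``coupled'' terms $Re(\pm ik\xi(i\xi\hat\varphi-\hat\psi-l\hat\omega)\overline{(i\xi\hat\omega-l\hat\varphi)})$ and the $\overline{\hat\theta}_{1t}$-weighted terms against one another.

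First I would take $l\times\eqref{eq10}$ from Lemma~\ref{lem10'} and add $\eqref{eq36}$ from Lemma~\ref{lem10''}. On the left this gives $\frac{d}{dt}(l\T_1+\T_2)+\frac{k_0l^2}{2}|i\xi\hat\omega-l\hat\varphi|^2+\frac{k_0}{2}\xi^2|i\xi\hat\omega-l\hat\varphi|^2$, i.e.\ $\frac{k_0}{2}(l^2+\xi^2)|i\xi\hat\omega-l\hat\varphi|^2$. On the right, the terms $-lRe(ik\xi(\cdots)\overline{(\cdots)})$ from $\eqref{eq10}$ and $+Re(ikl\xi(\cdots)\overline{(\cdots)})$ from $\eqref{eq36}$ cancel exactly. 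The two terms $\frac{kl}{m_1}Re(i\xi\overline{\hat\theta}_{1t}(i\xi\hat\varphi-\hat\psi-l\hat\omega))$ appearing with opposite signs ($-\frac{kl}{m_1}$ in $l\cdot\eqref{eq10}$, $+\frac{kl}{m_1}$ in $\eqref{eq36}$) also cancel. What remains problematic are the terms $\frac{\rho_1k_1}{m_1}Re(\xi^2\hat\varphi_t\overline{\hat\theta}_1)\cdot l$ and $-\frac{k_1\rho_1}{m_1}Re(i\xi^3\hat\omega_t\overline{\hat\theta}_1)$, which carry $\overline{\hat\theta}_1$ rather than $\overline{\hat\theta}_{1t}$ and therefore cannot simply be absorbed into the $|\hat\theta_{1t}|^2$-type dissipation coming from $\eqref{eq7}$.

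This is exactly why the corrector $\frac{\rho_1k_1}{m_1}Re(\xi^2\overline{\hat\theta}_1(i\xi\hat\omega-l\hat\varphi))$ is added into $\J_2$. Differentiating it and using $\eqref{eq17}$ (the identity $(i\xi\hat\omega-l\hat\varphi)_t=i\xi\hat\omega_t-l\hat\varphi_t$) together with $\eqref{eq4}$ to rewrite $\overline{\hat\theta}_{1t}$, one sees that $\frac{d}{dt}$ of this corrector reproduces precisely $-\frac{\rho_1k_1l}{m_1}Re(\xi^2\hat\varphi_t\overline{\hat\theta}_1)+\frac{\rho_1k_1}{m_1}Re(i\xi^3\hat\omega_t\overline{\hat\theta}_1)$ up to terms of the form $\xi^4|\hat\theta_1|^2$, $\xi^2|i\xi\hat\omega-l\hat\varphi|^2$, and terms involving $\hat\theta_{1tt}$ which via $\eqref{eq4}$ become $\xi^2|\hat\theta_{1t}|^2$, $\xi^4|\hat\theta_1|^2$, and $\xi^2|i\xi\hat\omega-l\hat\varphi|^2\cdot\|\cdot\|$ contributions that can be absorbed. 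Thus the $\overline{\hat\theta}_1$ cross terms cancel, leaving only: the desired dissipation $\frac{k_0}{2}(l^2+\xi^2)|i\xi\hat\omega-l\hat\varphi|^2$; the two benign terms $\frac{\alpha_1\rho_1l}{m_1}|\xi|^2|\hat\varphi_t||\hat\theta_{1t}|+\frac{\alpha_1\rho_1}{m_1}|\xi|^3|\hat\omega_t||\hat\theta_{1t}|$; and a collection of $|\hat\theta_{1t}|^2$- and $|i\xi\hat\omega-l\hat\varphi|^2$-type remainders. Finally, as in Lemma~\ref{lem3}, I would note that $2\delta\le\frac{l^2+\xi^2}{1+\xi^2}$ for some $\delta>0$ is not what is needed here — rather, since $l^2+\xi^2\ge l^2$, one gets $\frac{k_0}{2}(l^2+\xi^2)|i\xi\hat\omega-l\hat\varphi|^2\ge k_0\delta|i\xi\hat\omega-l\hat\varphi|^2$ directly with $\delta=l^2/2$, and absorb the small $\xi^2|i\xi\hat\omega-l\hat\varphi|^2$ remainders from the corrector by taking the corrector coefficient small (or noting those remainders are lower order in $\xi$ near $\xi=0$ and controlled by the $\xi^2$ part of the dissipation for large $\xi$); bounding every product by Young's inequality then yields $\eqref{eq41}$ with the stated $C_4(1+\xi^2)|\hat\theta_{1t}|^2$ on the right.

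The main obstacle is the bookkeeping around the corrector term: one must track carefully that differentiating $\frac{\rho_1k_1}{m_1}Re(\xi^2\overline{\hat\theta}_1(i\xi\hat\omega-l\hat\varphi))$ and substituting $\eqref{eq4}$ for $\hat\theta_{1tt}$ produces only terms of the three admissible types — $|\hat\theta_{1t}|^2$ (with a $1+\xi^2$ weight), $|i\xi\hat\omega-l\hat\varphi|^2$ (absorbable into the left side), and products $|\xi|^2|\hat\varphi_t||\hat\theta_{1t}|$, $|\xi|^3|\hat\omega_t||\hat\theta_{1t}|$ matching the right side — and in particular that no uncontrolled $|\hat\theta_1|^2$ term with a weight worse than what the energy dissipation or the $(1+\xi^2)$ factor can handle survives. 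Everything else is a routine application of Young's inequality exactly parallel to the Type~I argument.
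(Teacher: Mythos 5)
Your strategy coincides with the paper's: add $l\times\eqref{eq10}$ to \eqref{eq36}, use the corrector $\frac{\rho_1 k_1}{m_1}Re\left(\xi^2\overline{\hat{\theta}}_{1}\left(i\xi\hat{\omega}-l\hat{\varphi}\right)\right)$ to cancel the two cross terms carrying $\overline{\hat{\theta}}_1$, and finish with Young's inequality, so the proposal is correct in outline. One detail in your middle step is off, however: since the corrector contains $\hat{\theta}_1$ (not $\hat{\theta}_{1t}$), its time derivative is exactly $\frac{\rho_1k_1}{m_1}Re\left(\xi^2\overline{\hat{\theta}}_{1t}\left(i\xi\hat{\omega}-l\hat{\varphi}\right)\right)+\frac{\rho_1k_1}{m_1}Re\left(i\xi^3\overline{\hat{\theta}}_{1}\hat{\omega}_t\right)-\frac{l\rho_1k_1}{m_1}Re\left(\xi^2\overline{\hat{\theta}}_{1}\hat{\varphi}_t\right)$; the last two pieces cancel the offending terms of \eqref{eq10} and \eqref{eq36} identically, and no substitution of \eqref{eq4}, no $\hat{\theta}_{1tt}$, and no $\xi^4|\hat{\theta}_1|^2$ remainder ever arises. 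This matters because your hedge that a $\xi^4|\hat{\theta}_1|^2$ term ``can be absorbed'' would be false at this stage: the right-hand side of \eqref{eq41} has no $|\hat{\theta}_1|^2$ term and the dissipation \eqref{eq7} only controls $\xi^2|\hat{\theta}_{1t}|^2$ (the $\xi^2|\hat{\theta}_1|^2$ components of the energy are recovered only later, via the functional $\s$ of Lemma \ref{lem15}); the argument survives only because no such term actually appears. Finally, note that the paper deliberately retains the dissipation in the $(1+\xi^2)$-weighted form $2k_0\delta(1+\xi^2)|i\xi\hat{\omega}-l\hat{\varphi}|^2$ (estimate \eqref{eq43}) before applying Young, because that stronger intermediate form is reused in Proposition \ref{lem16}; your constant-$\delta$ shortcut still yields \eqref{eq41} as stated, since the single surviving cross term $\frac{\rho_1k_1}{m_1}\xi^2|\hat{\theta}_{1t}||i\xi\hat{\omega}-l\hat{\varphi}|$ can be split so that its $\xi^2|i\xi\hat{\omega}-l\hat{\varphi}|^2$ part is absorbed by the $\frac{k_0}{2}\xi^2$ portion of the dissipation and its $\xi^2|\hat{\theta}_{1t}|^2$ part is bounded by $C_4(1+\xi^2)|\hat{\theta}_{1t}|^2$.
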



\begin{proof}
By Lemma (\ref{lem10'}) and Lemma (\ref{lem10''}),
\begin{align*}
\frac{d}{dt} \J_2(\xi,t)+\frac{k_0}{2}(l^2+\xi^2)|i\xi \hat{\omega } - l \hat{\varphi}|^2 &\leq \frac{\rho_1l\alpha_1}{m_1}|\xi|^2|\hat{\varphi}_t||\hat{\theta}_{1t}|+\frac{\rho_1k_1}{m_1}\xi^2|\hat{\theta}_{1t}||i\xi\hat{\omega}-l\hat{\varphi}| +\frac{\alpha_1\rho_1}{m_1}|\xi|^3|\hat{\omega}_t||\hat{\theta}_{1t}| \\
&+C_4(1+ \xi^2)|\hat{\theta}_{1t}|^2 
\end{align*}
Note that there exist $\delta >0$ such that $4\delta \leq \frac{l^2+\xi^2}{1+\xi^2}$. Thus,
\begin{align}\label{eq43}
\frac{d}{dt} \J_2(\xi,t)+2k_0\delta(1+\xi^2)|i\xi \hat{\omega } - l \hat{\varphi}|^2 &\leq \frac{\rho_1l\alpha_1}{m_1}|\xi|^2|\hat{\varphi}_t||\hat{\theta}_{1t}|+\frac{\rho_1k_1}{m_1}(1+\xi^2)|\hat{\theta}_{1t}||i\xi\hat{\omega}-l\hat{\varphi}| +\frac{\alpha_1\rho_1}{m_1}|\xi|^3|\hat{\omega}_t||\hat{\theta}_{1t}| \notag \\
&+C_4(1+ \xi^2)|\hat{\theta}_{1t}|^2. 
\end{align}
Applying Young inequality, (\ref{eq41}) follows.
\end{proof}

\begin{lem}\label{lem11}
Consider the functional 
\begin{equation*}
\J_3(\xi,t)=Re\left(-\rho_2 \hat{\psi}_t\overline{\left ( i\xi \hat{\varphi} - \hat{\psi} -l \hat{\omega }  \right )}-i\frac{\rho_1b}{k}\xi\hat{\psi}\overline{\hat{\varphi}_t}\right).
\end{equation*}
If $\frac{\rho_1}{\rho_2} =\frac{k}{b}$ and $k=k_0$, then
\begin{align}\label{eq12}
\frac{d}{dt}\J_3(\xi,t) +\frac{k}{2}|i\xi \hat{\varphi} -\hat{\psi} -l \hat{\omega }|^2 \leq \rho_2 |\hat{\psi}_t|^2 +\rho_2 l Re\left( \hat{\psi}_t\overline{\hat{\omega}_t}\right) -blRe \left( i\xi\hat{\psi}\overline{\left ( i\xi \hat{\omega } - l \hat{\varphi} \right )}\right) + \frac{bl\gamma}{k} |\xi| |\hat{\psi}||\hat{\theta}_{1t}|+C_5|\xi|^2|\hat{\theta}_{2t}|^2
\end{align}
Moreover, if $\frac{\rho_1}{\rho_2} \neq \frac{k}{b}$ or $k\neq k_0$, then
\begin{align}\label{eq12'}
\frac{d}{dt}\J_3(\xi,t) +\frac{k}{2}|i\xi \hat{\varphi} -\hat{\psi} -l \hat{\omega }|^2 \leq &\rho_2 |\hat{\psi}_t|^2 +\rho_2 l Re\left( \hat{\psi}_t\overline{\hat{\omega}_t}\right) +\left(\rho_2-\frac{b\rho_1}{k}\right) Re\left( i\xi\hat{\psi}_t\overline{\hat{\varphi}_t}\right) \notag\\
&-\frac{k_0bl}{k}Re \left( i\xi\hat{\psi}\overline{\left ( i\xi \hat{\omega } - l \hat{\varphi} \right )}\right)  + \frac{bl\gamma}{k} |\xi| |\hat{\psi}||\hat{\theta}_{1t}|+C_5|\xi|^2|\hat{\theta}_{2t}|^2,
\end{align}
where $C_5$ is a positive constant.
\end{lem}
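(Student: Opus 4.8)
The plan is to treat Lemma \ref{lem11} as the Type III counterpart of Lemma \ref{lem4}: the functional $\J_3$ has exactly the same form as the functional $J_3$ there, and the systems \eqref{eq1}--\eqref{eq5} and \eqref{e1}--\eqref{e5} differ, for the purposes of this lemma, only in that the thermal couplings in \eqref{eq1} and \eqref{eq2} carry a time derivative --- $l\gamma\hat\theta_{1t}$ and $i\gamma\xi\hat\theta_{2t}$ instead of $l\gamma\hat\theta_1$ and $i\gamma\xi\hat\theta_2$. This is exactly why $|\hat\theta_{1t}|$ and $|\hat\theta_{2t}|$, rather than $|\hat\theta_1|$ and $|\hat\theta_2|$, will appear on the right of \eqref{eq12}--\eqref{eq12'}, and it is the only place where the Type I argument needs to be modified.

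First I would multiply \eqref{eq2} by $-\overline{(i\xi\hat\varphi - \hat\psi - l\hat\omega)}$ and take the real part. Moving the time derivative off $\hat\psi_{tt}$ via $-Re(\rho_2\hat\psi_{tt}\overline{(i\xi\hat\varphi - \hat\psi - l\hat\omega)}) = \frac{d}{dt}Re(-\rho_2\hat\psi_t\overline{(i\xi\hat\varphi - \hat\psi - l\hat\omega)}) + Re(\rho_2\hat\psi_t\overline{(i\xi\hat\varphi_t - \hat\psi_t - l\hat\omega_t)})$ produces, besides $\frac{d}{dt}Re(-\rho_2\hat\psi_t\overline{(i\xi\hat\varphi-\hat\psi-l\hat\omega)})$, the terms $-Re(i\rho_2\xi\hat\psi_t\overline{\hat\varphi_t})$, $-\rho_2|\hat\psi_t|^2$, $-\rho_2 l\,Re(\hat\psi_t\overline{\hat\omega_t})$, the good term $k|i\xi\hat\varphi - \hat\psi - l\hat\omega|^2$, the thermal term $Re(i\gamma\xi\hat\theta_{2t}\overline{(i\xi\hat\varphi - \hat\psi - l\hat\omega)})$, and the term $-Re(b\xi^2\hat\psi\overline{(i\xi\hat\varphi - \hat\psi - l\hat\omega)})$, which still has to be massaged. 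To handle the latter I would use $\xi^2\hat\psi\overline{(i\xi\hat\varphi - \hat\psi - l\hat\omega)} = (i\xi\hat\psi)\,\overline{i\xi(i\xi\hat\varphi - \hat\psi - l\hat\omega)}$ (valid for real $\xi$) and solve \eqref{eq1} for $ik\xi(i\xi\hat\varphi - \hat\psi - l\hat\omega) = \rho_1\hat\varphi_{tt} - k_0l(i\xi\hat\omega - l\hat\varphi) + l\gamma\hat\theta_{1t}$; substituting, and then writing $Re(i\xi\hat\psi\overline{\hat\varphi_{tt}}) = \frac{d}{dt}Re(i\xi\hat\psi\overline{\hat\varphi_t}) - Re(i\xi\hat\psi_t\overline{\hat\varphi_t})$, turns that term into
\[
\frac{\rho_1 b}{k}\frac{d}{dt}Re(i\xi\hat\psi\overline{\hat\varphi_t}) - \frac{\rho_1 b}{k}Re(i\xi\hat\psi_t\overline{\hat\varphi_t}) - \frac{k_0 bl}{k}Re(i\xi\hat\psi\overline{(i\xi\hat\omega - l\hat\varphi)}) + \frac{bl\gamma}{k}Re(i\xi\hat\psi\overline{\hat\theta_{1t}}).
\]
The new derivative term combines with $\frac{d}{dt}Re(-\rho_2\hat\psi_t\overline{(i\xi\hat\varphi-\hat\psi-l\hat\omega)})$ to reconstruct $\frac{d}{dt}\J_3$, precisely because $\J_3 = Re(-\rho_2\hat\psi_t\overline{(i\xi\hat\varphi-\hat\psi-l\hat\omega)}) - Re(i\tfrac{\rho_1 b}{k}\xi\hat\psi\overline{\hat\varphi_t})$.

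At this point the two copies of $Re(i\xi\hat\psi_t\overline{\hat\varphi_t})$ --- one from integrating $\hat\psi_{tt}$ by parts in time, one from the substitution via \eqref{eq1} --- combine with coefficient $\rho_2 - \frac{b\rho_1}{k}$, and the cross term $Re(i\xi\hat\psi\overline{(i\xi\hat\omega - l\hat\varphi)})$ appears with coefficient $-\frac{k_0 bl}{k}$. In the balanced case $\frac{\rho_1}{\rho_2} = \frac{k}{b}$, $k = k_0$, the first coefficient vanishes and $\frac{k_0 bl}{k}$ collapses to $bl$, which is exactly the shape of \eqref{eq12}; in the unbalanced case these two terms persist and give \eqref{eq12'}. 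Finally I would bound the thermal contributions: keep $\frac{bl\gamma}{k}Re(i\xi\hat\psi\overline{\hat\theta_{1t}}) \le \frac{bl\gamma}{k}|\xi||\hat\psi||\hat\theta_{1t}|$ as a product, and apply Cauchy--Schwarz and Young to $Re(i\gamma\xi\hat\theta_{2t}\overline{(i\xi\hat\varphi - \hat\psi - l\hat\omega)}) \le \gamma|\xi||\hat\theta_{2t}||i\xi\hat\varphi - \hat\psi - l\hat\omega| \le \frac{k}{2}|i\xi\hat\varphi - \hat\psi - l\hat\omega|^2 + C_5\xi^2|\hat\theta_{2t}|^2$, absorbing the $\frac{k}{2}|i\xi\hat\varphi - \hat\psi - l\hat\omega|^2$ into the good term to leave the factor $\frac{k}{2}$ on the left of \eqref{eq12}--\eqref{eq12'}.

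I do not expect a genuine obstacle here --- it is an identity-plus-Young computation parallel to Lemma \ref{lem4}. The one spot that demands care is the bookkeeping around the cancellation: the hypothesis $\frac{\rho_1}{\rho_2} = \frac{k}{b}$ is used exactly once, to annihilate the two $Re(i\xi\hat\psi_t\overline{\hat\varphi_t})$ terms against each other, and $k = k_0$ is used only to simplify the constant multiplying $Re(i\xi\hat\psi\overline{(i\xi\hat\omega - l\hat\varphi)})$; tracking the signs correctly through these cancellations and through the two time-integrations by parts is where an error would most easily creep in.
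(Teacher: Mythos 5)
Your proposal is correct and is essentially the paper's own argument: the paper proves Lemma \ref{lem11} by the single remark ``proceeding as in the proof of Lemma \ref{lem4},'' and your computation is exactly that Type~I argument transplanted to \eqref{eq1}--\eqref{eq5}, with the only change being the replacement of $\hat\theta_1,\hat\theta_2$ by $\hat\theta_{1t},\hat\theta_{2t}$ in the coupling terms, which is precisely why those quantities appear on the right of \eqref{eq12}--\eqref{eq12'}. The bookkeeping you describe (the two $Re(i\xi\hat\psi_t\overline{\hat\varphi_t})$ contributions combining with coefficient $\rho_2-\tfrac{b\rho_1}{k}$, and $k=k_0$ entering only through the constant $\tfrac{k_0bl}{k}$) matches the paper's treatment in Lemma \ref{lem4}.
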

\begin{proof}
Proceeding as proof of Lemma \ref{lem4}, we obtain (\ref{eq12}) and (\ref{eq12'}). 
\end{proof}

\begin{lem}\label{lem12}
Let $0<\varepsilon_1 < \frac{\rho_2l^2}{2\rho_1}$ and consider the functional
\begin{equation*}
\J_4(\xi,t)=Re\left(\frac{\rho_2^2l^2}{\rho_1}\hat{\psi}_t\overline{\hat{\psi}}-\rho_2l\hat{\omega}_t\overline{\hat{\psi}}\right).
\end{equation*}
If $\frac{\rho_1}{\rho_2} =\frac{k}{b}$ and $k=k_0$, then
\begin{align}\label{eq13}
\frac{d\J_4(\xi,t)}{dt}+b\left(\frac{\rho_2l^2}{\rho_1}-\frac{\varepsilon_1}{2}\right) \xi^2|\hat{\psi}|^2 \leq \frac{\rho_2^2l^2}{\rho_1}|\hat{\psi}_t|^2 - \rho_2lRe\left(\overline{\hat{\psi}}_t\hat{\omega}_t\right) + blRe\left( i\xi \hat{\psi}\overline{\left ( i\xi \hat{\omega } - l \hat{\varphi} \right )}\right) + C(\varepsilon_1)\left(|\hat{\theta}_{1t}|^2+|\hat{\theta}_{2t}|^2\right)
\end{align}
Moreover, If $\frac{\rho_1}{\rho_2} \neq \frac{k}{b}$ or $k\neq k_0$, then
\begin{align}\label{eq13'}
\frac{d}{dt}\J_4(\xi,t)+b\left(\frac{\rho_2l^2}{\rho_1}-\frac{\varepsilon_1}{2}\right) \xi^2|\hat{\psi}|^2 \leq & \frac{\rho_2^2l^2}{\rho_1}|\hat{\psi}_t|^2 - \rho_2lRe\left(\overline{\hat{\psi}}_t\hat{\omega}_t\right) + \frac{\rho_2k_0l}{\rho_1}Re\left( i\xi \hat{\psi}\overline{\left ( i\xi \hat{\omega } - l \hat{\varphi} \right )}\right) \notag \\
&+ C(\varepsilon_1)\left(|\hat{\theta}_{1t}|^2+|\hat{\theta}_{2t}|^2\right),
\end{align}
where $ C(\varepsilon_1)$ is a positive constant.
\end{lem}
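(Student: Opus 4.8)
The plan is to mimic directly the proof of Lemma~\ref{lem5}, since $\J_4$ is the literal analogue of $J_4$ and the only structural change in passing from Type~I to Type~III is that the temperature $\hat\theta_j$ is replaced by $\hat\theta_{jt}$ in the coupling terms of $(\ref{eq2})$ and $(\ref{eq3})$. First I would multiply $(\ref{eq2})$ by $\overline{\hat\psi}$ and take the real part, obtaining
\begin{equation*}
\frac{d}{dt}Re\left(\rho_2\hat{\psi}_{t}\overline{\hat{\psi}}\right)-\rho_2|\hat{\psi}_t|^2+b\xi^{2}|\hat{\psi}|^2-Re\left(k\overline{\hat{\psi}}\left(i\xi\hat{\varphi}-\hat{\psi}-l\hat{\omega}\right)\right)+Re\left(i\gamma\xi\overline{\hat{\psi}}\hat{\theta}_{2t}\right)=0,
\end{equation*}
which is exactly $(\ref{ee12})$ with $\hat\theta_2$ replaced by $\hat\theta_{2t}$.

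Next I would eliminate the shear term $k(i\xi\hat\varphi-\hat\psi-l\hat\omega)$ using $(\ref{eq3})$: solving $(\ref{eq3})$ for $kl(i\xi\hat\varphi-\hat\psi-l\hat\omega)$ and substituting, the term $k\overline{\hat\psi}(i\xi\hat\varphi-\hat\psi-l\hat\omega)$ becomes, after dividing by $l$, a combination of $\rho_1\overline{\hat\psi}\hat\omega_{tt}$, the axial term $k_0(i\xi\hat\omega-l\hat\varphi)$ and the new coupling $i\gamma\xi\hat\theta_{1t}$. The term $\rho_1\overline{\hat\psi}\hat\omega_{tt}$ is then rewritten as a total derivative $\frac{d}{dt}Re(\rho_1\overline{\hat\psi}\hat\omega_t)$ minus $\rho_1 Re(\overline{\hat\psi}_t\hat\omega_t)$, which is precisely what generates the second term $-\rho_2 l\hat\omega_t\overline{\hat\psi}$ in the definition of $\J_4$ after multiplying the whole identity by the weight $\rho_2 l^2/\rho_1$. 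This produces
\begin{equation*}
\frac{d}{dt}\J_4(\xi,t)+\frac{b\rho_2l^2}{\rho_1}\xi^{2}|\hat{\psi}|^2=\frac{\rho_2^2l^2}{\rho_1}|\hat{\psi}_t|^2-\rho_2lRe\left(\overline{\hat{\psi}_t}\hat{\omega}_t\right)-\frac{\rho_2k_0l}{\rho_1}Re\left(i\xi\overline{\hat{\psi}}\left(i\xi\hat{\omega}-l\hat{\varphi}\right)\right)+\frac{\gamma\rho_2l}{\rho_1}Re\left(i\xi\overline{\hat{\psi}}\hat{\theta}_{1t}\right)-\frac{\gamma\rho_2l^2}{\rho_1}Re\left(i\xi\overline{\hat{\psi}}\hat{\theta}_{2t}\right).
\end{equation*}
At this stage one applies Young's inequality to absorb the $\gamma$-terms: $\frac{\gamma\rho_2l}{\rho_1}|\xi||\hat\psi||\hat\theta_{1t}|+\frac{\gamma\rho_2l^2}{\rho_1}|\xi||\hat\psi||\hat\theta_{2t}|\le\frac{b\varepsilon_1}{2}\xi^2|\hat\psi|^2+C(\varepsilon_1)(|\hat\theta_{1t}|^2+|\hat\theta_{2t}|^2)$, which moves the small $\frac{b\varepsilon_1}{2}\xi^2|\hat\psi|^2$ to the left-hand side and leaves the coefficient $b(\rho_2l^2/\rho_1-\varepsilon_1/2)$ on $\xi^2|\hat\psi|^2$ as claimed.

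For the two cases I would branch exactly as in Lemma~\ref{lem4}/Lemma~\ref{lem5}: when $\rho_1/\rho_2=k/b$ and $k=k_0$ the axial term already appears with the clean coefficient $\rho_2k_0l/\rho_1=bl$ (using $k_0=k$ and $\rho_1/k=\rho_2/b$), giving $(\ref{eq13})$ with the sign as written; when the wave speeds differ, I keep the coefficient in the mixed form $\rho_2k_0l/\rho_1$ and obtain $(\ref{eq13'})$. The only thing to watch is bookkeeping of signs on the axial coupling term $Re(i\xi\hat\psi\overline{(i\xi\hat\omega-l\hat\varphi)})$, since it is left uncontrolled (to be cancelled later against the corresponding term in $\J_3$ when forming $K=\J_3+\J_4$); I expect this sign tracking, together with verifying that the substitution of $(\ref{eq3})$ reproduces exactly the weight $\rho_2l^2/\rho_1$ built into $\J_4$, to be the only mildly delicate point, and it is routine. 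No new ideas beyond the Type~I argument are required, so the proof reduces to ``proceed as in the proof of Lemma~\ref{lem5}, replacing $\hat\theta_j$ by $\hat\theta_{jt}$ and applying Young's inequality.''
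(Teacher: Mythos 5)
Your proposal is correct and is exactly the paper's approach: the paper's own proof of this lemma consists of the single line ``Proceeding as proof of Lemma~\ref{lem5}, we obtain (\ref{eq13}) and (\ref{eq13'})'', and you have simply written out that transcription in full, correctly replacing $\hat{\theta}_j$ by $\hat{\theta}_{jt}$ in the coupling terms, recovering the weight $\rho_2 l^2/\rho_1$, and tracking the sign of the axial term $Re\left(i\xi\hat{\psi}\overline{\left(i\xi\hat{\omega}-l\hat{\varphi}\right)}\right)$. The intermediate identity you display and the Young-inequality absorption into $\frac{b\varepsilon_1}{2}\xi^2|\hat{\psi}|^2$ match the computation in Lemma~\ref{lem5} verbatim.
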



\begin{proof}
Proceeding as proof of Lemma \ref{lem5}, we obtain  (\ref{eq13}) and (\ref{eq13'}). 
\end{proof}

\begin{lem}\label{lem13}
Let $0<\varepsilon_1 < \dfrac{\rho_2l^2}{2\rho_1}$ and consider $\K(\xi,t)=\J_3(\xi,t)+\J_4(\xi,t)$, If $\frac{\rho_1}{\rho_2} =\frac{k}{b}$ and $k=k_0$, then 
\begin{align*}
\frac{d}{dt}\K(\xi,t)+b\left(\frac{\rho_2l^2}{\rho_1}-\varepsilon_1\right) \xi^2|\hat{\psi}|^2 + \frac{k}{2}|i\xi \hat{\varphi} - \hat{\psi} -l \hat{\omega }|^2 \leq \rho_2s_1|\hat{\psi}_t|^2 + C(\varepsilon_1)|\hat{\theta}_{1t}
|^2+  C(\varepsilon_1)(1+\xi^2)|\hat{\theta}_{2t}|^2 
\end{align*}
Moreover, if $\frac{\rho_1}{\rho_2} \neq \frac{k}{b}$  or $k\neq k_0$, then
\begin{align*}
\frac{d}{dt}\K(\xi,t)+b\left(\frac{\rho_2l^2}{\rho_1}-\varepsilon_1\right) \xi^2|\hat{\psi}|^2 + \frac{k}{2}|i\xi \hat{\varphi} - \hat{\psi} -l \hat{\omega }|^2 \leq & \rho_2s_1|\hat{\psi}_t|^2 + k_0\left(\frac{\rho_2}{\rho_1}-\frac{b}{k}\right)l Re\left(i\xi \hat{\psi}(\overline{i\xi\hat{\omega}-l\hat{\varphi}})\right) \notag\\
&+ \left(\rho_2-\frac{b\rho_1}{k}\right) Re\left(i\xi \hat{\psi}_t\overline{\hat{\varphi}}_t\right)+
C(\varepsilon_1)|\hat{\theta}_{1t}|^2 \notag \\
&+  C(\varepsilon_1)(1+\xi^2)|\hat{\theta}_{2t}|^2,
\end{align*}
where $s_1= \frac{\rho_2 l^2}{\rho_1}+1$.
\end{lem}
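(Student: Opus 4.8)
The plan is to follow exactly the pattern of Lemma~\ref{lem6} from the Type~I part: since $\K=\J_3+\J_4$, it suffices to add the differential inequalities of Lemmas~\ref{lem11} and~\ref{lem12} and then tidy up the right-hand side with Young's inequality. All of the work is bookkeeping of cancellations and absorptions.

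In the equal-wave-speed case ($\frac{\rho_1}{\rho_2}=\frac{k}{b}$ and $k=k_0$), add \eqref{eq12} and \eqref{eq13}. Two pairs of terms cancel identically: $\rho_2 l\,Re(\hat\psi_t\overline{\hat\omega_t})$ from \eqref{eq12} and $-\rho_2 l\,Re(\overline{\hat\psi}_t\hat\omega_t)$ from \eqref{eq13} (equal since $Re(z)=Re(\bar z)$), and $-bl\,Re(i\xi\hat\psi\,\overline{(i\xi\hat\omega-l\hat\varphi)})$ from \eqref{eq12} against $+bl\,Re(i\xi\hat\psi\,\overline{(i\xi\hat\omega-l\hat\varphi)})$ from \eqref{eq13}. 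What survives on the left is $b\big(\frac{\rho_2 l^2}{\rho_1}-\frac{\varepsilon_1}{2}\big)\xi^2|\hat\psi|^2+\frac{k}{2}|i\xi\hat\varphi-\hat\psi-l\hat\omega|^2$; on the right, the $|\hat\psi_t|^2$ coefficients add to $\rho_2+\frac{\rho_2^2 l^2}{\rho_1}=\rho_2 s_1$, and the heat terms are $\frac{bl\gamma}{k}|\xi||\hat\psi||\hat\theta_{1t}|+C_5\xi^2|\hat\theta_{2t}|^2+C(\varepsilon_1)(|\hat\theta_{1t}|^2+|\hat\theta_{2t}|^2)$. Young's inequality then gives $\frac{bl\gamma}{k}|\xi||\hat\psi||\hat\theta_{1t}|\le\frac{b\varepsilon_1}{2}\xi^2|\hat\psi|^2+C(\varepsilon_1)|\hat\theta_{1t}|^2$, and absorbing the first term into the left side — legitimate precisely because $0<\varepsilon_1<\frac{\rho_2 l^2}{2\rho_1}$ — downgrades the coefficient of $\xi^2|\hat\psi|^2$ from $\frac{\varepsilon_1}{2}$ to $\varepsilon_1$ and, after renaming constants, yields the asserted inequality with right-hand side $\rho_2 s_1|\hat\psi_t|^2+C(\varepsilon_1)|\hat\theta_{1t}|^2+C(\varepsilon_1)(1+\xi^2)|\hat\theta_{2t}|^2$.

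For the case $\frac{\rho_1}{\rho_2}\neq\frac{k}{b}$ or $k\neq k_0$ the argument is identical, the only difference being that the mismatch terms no longer cancel. Adding \eqref{eq12'} and \eqref{eq13'}, the two terms containing $Re(i\xi\hat\psi\,\overline{(i\xi\hat\omega-l\hat\varphi)})$ combine to $k_0 l\big(\frac{\rho_2}{\rho_1}-\frac{b}{k}\big)Re(i\xi\hat\psi\,\overline{(i\xi\hat\omega-l\hat\varphi)})$, and an extra $\big(\rho_2-\frac{b\rho_1}{k}\big)Re(i\xi\hat\psi_t\overline{\hat\varphi_t})$ survives; both are simply carried into the statement, exactly as in Lemma~\ref{lem6}, to be dealt with later when $\K$ is combined with the other functionals, while the pair $\rho_2 l\,Re(\hat\psi_t\overline{\hat\omega_t})$ and $-\rho_2 l\,Re(\overline{\hat\psi}_t\hat\omega_t)$ again cancels. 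The remaining Young-inequality absorption of $\frac{bl\gamma}{k}|\xi||\hat\psi||\hat\theta_{1t}|$ is the same as above. There is no substantial obstacle; the only point requiring attention is that the coefficient of $\xi^2|\hat\psi|^2$ stay strictly positive after the absorption, which is exactly what the hypothesis $\varepsilon_1<\frac{\rho_2 l^2}{2\rho_1}$ guarantees.
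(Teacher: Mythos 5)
Your proposal is correct and follows exactly the same route as the paper, whose proof of this lemma is simply ``It follows from Lemmas \ref{lem11} and \ref{lem12}, applying Young inequality''; you have merely written out the cancellations and the absorption step explicitly, and all of them check out (in particular the cancellation $Re(\hat\psi_t\overline{\hat\omega_t})=Re(\overline{\hat\psi}_t\hat\omega_t)$, the sum $\rho_2+\tfrac{\rho_2^2l^2}{\rho_1}=\rho_2 s_1$, and the Young absorption costing $\tfrac{\varepsilon_1}{2}$ of the $\xi^2|\hat\psi|^2$ coefficient).
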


\begin{proof}
It follows from Lemmas $\ref{lem11}$ and $\ref{lem12}$, applying Young inequality.
\end{proof}


\begin{lem}\label{lem14}
Consider the functional 
\begin{equation*}
\h(\xi,t)=\rho_1 Re\left( \left(i\xi \hat{\varphi}- \hat{\psi} -l \hat{\omega }\right)\overline{\hat{\omega}_t}\right)+\rho_1 Re\left(\left(i\xi \hat{\omega } - l \hat{\varphi}\right)\overline{\hat{\varphi}_t}\right).
\end{equation*}
If $\frac{\rho_1}{\rho_2} =\frac{k}{b}$ and $k=k_0$, then
\begin{align}\label{eq15}
\frac{d}{dt}\h(\xi,t)+\rho_1 l |\hat{\varphi}_t|^2+\frac{\rho_1 l}{2}|\hat{\omega}_t|^2 \leq \frac{\rho_2 k}{2 b l}|\hat{\psi}_t|^2 +\frac{3kl}{2}|i\xi \hat{\varphi} -\hat{\psi} -l \hat{\omega }|^2+\frac{3k_0l}{2}|i\xi \hat{\omega } - l \hat{\varphi}|^2 +C_6(1+\xi^2)|\hat{\theta}_{1t}|^2
\end{align}
Moreover, if $\frac{\rho_1}{\rho_2} \neq\frac{k}{b}$ or $k\neq k_0$,
\begin{align}\label{eq15'}
\frac{d}{dt}\h(\xi,t)+\rho_1 l |\hat{\varphi}_t|^2+\frac{\rho_1 l}{2}|\hat{\omega}_t|^2 \leq &\frac{\rho_1}{2l}|\hat{\psi}_t|^2 +C_1(k,k_0)|i\xi \hat{\varphi} -\hat{\psi} -l \hat{\omega }|^2+C_2(k,k_0)(1+\xi^2)|i\xi \hat{\omega } - l \hat{\varphi}|^2 \notag\\
&+C_6(1+\xi^2)|\hat{\theta}_{1t}|^2,
\end{align}
where $C_1(k,k_0), C_6$ are positive constants.
\end{lem}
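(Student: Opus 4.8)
The plan is to repeat, \emph{mutatis mutandis}, the argument used for Lemma~\ref{lem7}, now working with the Fourier system \eqref{eq1}--\eqref{eq5} in place of \eqref{e1}--\eqref{e5}. The only structural difference is that the thermal couplings $l\gamma\hat{\theta}_1$ in \eqref{e1} and $i\gamma\xi\hat{\theta}_1$ in \eqref{e3} are now $l\gamma\hat{\theta}_{1t}$ in \eqref{eq1} and $i\gamma\xi\hat{\theta}_{1t}$ in \eqref{eq3}; consequently every $\hat{\theta}_1$ occurring on the right-hand side of the estimate of Lemma~\ref{lem7} is to be replaced by $\hat{\theta}_{1t}$. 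Note that the algebraic identities \eqref{e16}--\eqref{e17} do not involve the heat equations, so they remain valid here and are used in exactly the same way as before.

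First I would multiply \eqref{eq1} by $\overline{\left(i\xi\hat{\omega}-l\hat{\varphi}\right)}$, multiply \eqref{e17} by $\rho_1\overline{\hat{\varphi}_t}$, add the two and take real parts. This produces $\frac{d}{dt}Re\!\left(\rho_1\left(i\xi\hat{\omega}-l\hat{\varphi}\right)\overline{\hat{\varphi}_t}\right)+\rho_1 l|\hat{\varphi}_t|^2$ on the left, a mixed velocity term $-\rho_1 Re\!\left(i\xi\hat{\omega}_t\overline{\hat{\varphi}_t}\right)$, the shear--force coupling $-kRe\!\left(i\xi\left(i\xi\hat{\varphi}-\hat{\psi}-l\hat{\omega}\right)\overline{\left(i\xi\hat{\omega}-l\hat{\varphi}\right)}\right)$, the axial term $-k_0 l|i\xi\hat{\omega}-l\hat{\varphi}|^2$, and the thermal remainder $l\gamma Re\!\left(\hat{\theta}_{1t}\overline{\left(i\xi\hat{\omega}-l\hat{\varphi}\right)}\right)$. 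Symmetrically, I would multiply \eqref{eq3} by $\overline{\left(i\xi\hat{\varphi}-\hat{\psi}-l\hat{\omega}\right)}$, multiply \eqref{e16} by $\rho_1\overline{\hat{\omega}_t}$, add and take real parts, obtaining $\frac{d}{dt}Re\!\left(\rho_1\left(i\xi\hat{\varphi}-\hat{\psi}-l\hat{\omega}\right)\overline{\hat{\omega}_t}\right)+\rho_1 l|\hat{\omega}_t|^2$, the mixed term $-\rho_1 Re\!\left(i\xi\hat{\varphi}_t\overline{\hat{\omega}_t}\right)$, the cross term $\rho_1 Re\!\left(\hat{\psi}_t\overline{\hat{\omega}_t}\right)$, the couplings $-kl|i\xi\hat{\varphi}-\hat{\psi}-l\hat{\omega}|^2$ and $-k_0 Re\!\left(i\xi\left(i\xi\hat{\omega}-l\hat{\varphi}\right)\overline{\left(i\xi\hat{\varphi}-\hat{\psi}-l\hat{\omega}\right)}\right)$, and the thermal remainder $\gamma Re\!\left(i\xi\hat{\theta}_{1t}\overline{\left(i\xi\hat{\varphi}-\hat{\psi}-l\hat{\omega}\right)}\right)$.

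Adding the two identities, the sum of the mixed velocity terms $-\rho_1 Re\!\left(i\xi\hat{\omega}_t\overline{\hat{\varphi}_t}\right)-\rho_1 Re\!\left(i\xi\hat{\varphi}_t\overline{\hat{\omega}_t}\right)$ vanishes identically, since $Re\!\left(i\xi w\overline{z}\right)=-Re\!\left(i\xi z\overline{w}\right)$ for real $\xi$, and the two shear--force cross terms combine (up to sign) into the single term $|k-k_0||\xi||i\xi\hat{\omega}-l\hat{\varphi}||i\xi\hat{\varphi}-\hat{\psi}-l\hat{\omega}|$, which is absent when $k=k_0$. One is thus left with
\begin{align*}
\frac{d}{dt}\h(\xi,t)+\rho_1 l|\hat{\varphi}_t|^2+\rho_1 l|\hat{\omega}_t|^2 \le{}& kl|i\xi\hat{\varphi}-\hat{\psi}-l\hat{\omega}|^2+k_0 l|i\xi\hat{\omega}-l\hat{\varphi}|^2+\rho_1|\hat{\psi}_t||\hat{\omega}_t| \\
&+\gamma|\xi||\hat{\theta}_{1t}||i\xi\hat{\varphi}-\hat{\psi}-l\hat{\omega}|+\gamma l|\hat{\theta}_{1t}||i\xi\hat{\omega}-l\hat{\varphi}| \\
&+|k-k_0||\xi||i\xi\hat{\omega}-l\hat{\varphi}||i\xi\hat{\varphi}-\hat{\psi}-l\hat{\omega}|.
\end{align*}

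Finally I would apply Young's inequality. The thermal products are absorbed into $C_6(1+\xi^2)|\hat{\theta}_{1t}|^2$ plus small multiples of $|i\xi\hat{\varphi}-\hat{\psi}-l\hat{\omega}|^2$ and $|i\xi\hat{\omega}-l\hat{\varphi}|^2$; the $(k-k_0)$ term (absent if $k=k_0$) yields $C_1(k,k_0)|i\xi\hat{\varphi}-\hat{\psi}-l\hat{\omega}|^2+C_2(k,k_0)(1+\xi^2)|i\xi\hat{\omega}-l\hat{\varphi}|^2$, the factor $(1+\xi^2)$ absorbing the extra $\xi^2$; and $\rho_1|\hat{\psi}_t||\hat{\omega}_t|$ is split as $\tfrac{\rho_1 l}{2}|\hat{\omega}_t|^2+\tfrac{\rho_1}{2l}|\hat{\psi}_t|^2$, the first half being carried to the left. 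In the case $\tfrac{\rho_1}{\rho_2}=\tfrac{k}{b}$, $k=k_0$, the relation $\rho_1=\tfrac{\rho_2 k}{b}$ turns $\tfrac{\rho_1}{2l}$ into $\tfrac{\rho_2 k}{2bl}$ and the $(k-k_0)$ term drops, so one recovers \eqref{eq15} with the explicit coefficients $\tfrac{3kl}{2}$ and $\tfrac{3k_0 l}{2}$; otherwise one keeps $\tfrac{\rho_1}{2l}|\hat{\psi}_t|^2$ and gets \eqref{eq15'}. The main obstacle is purely one of bookkeeping: tracking all cross terms generated through \eqref{e16}--\eqref{e17}, checking the exact cancellation of the mixed velocity terms, and tuning the Young weights so that the coefficients of $|\hat{\varphi}_t|^2$ and $|\hat{\omega}_t|^2$ left on the left-hand side remain $\rho_1 l$ and $\tfrac{\rho_1 l}{2}$; beyond that it is a verbatim repetition of the Type~I computation.
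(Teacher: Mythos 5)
Your proposal is correct and follows exactly the paper's route: the paper's own proof of this lemma is the one-line remark ``proceeding as in the proof of Lemma~\ref{lem7}'', and you have simply carried out that computation explicitly, with the only change being the substitution of $\hat{\theta}_{1t}$ for $\hat{\theta}_1$ in the thermal coupling terms of \eqref{eq1} and \eqref{eq3}. The cancellation of the mixed velocity terms, the $|k-k_0|$ cross term, and the Young-inequality bookkeeping all match the Type~I argument of Lemma~\ref{lem7} verbatim.
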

\begin{proof}
Proceeding as proof of Lemma \ref{lem7}, we obtain (\ref{eq15}) and (\ref{eq15'}).
\end{proof}
\begin{lem}\label{lem15}
The functional 
\[
\s(\xi,t)= \frac{\gamma}{m_1}Re\left(\xi^2\hat{\theta}_{1t}\overline{\hat{\theta}}_{1}\right)+\frac{\gamma}{m_2}Re\left(\xi^2\hat{\theta}_{2t}\overline{\hat{\theta}}_{2}\right) + \frac{\gamma}{2}\xi^4\left(\frac{\alpha_1}{m_1}|\hat{\theta}_{1t}|^2+ \frac{\alpha_2}{m_2}| \hat{\theta}_{2t}|^2  \right)+ \gamma Re\left( i\xi^3\hat{\psi}\overline{\hat{\theta}}_2+ \xi^2\overline{\hat{\theta}}_1\left(i \xi \hat{\omega} - l\hat{\varphi}\right)\right)
\]
satisfies
\begin{equation}\label{eqq1}
\frac{d}{dt}\s(\xi,t)+\frac{k_1\gamma}{m_1}\xi^4|\hat{\theta}_1|^2+\frac{k_2\gamma}{m_2}\xi^4|\hat{\theta}_2|^2 \leq \gamma\xi^2 |\hat{\theta}_{1t}||i\xi\hat{\omega}-l\hat{\varphi}|+\gamma|\xi|^3|\hat{\psi}||\hat{\theta}_{2t}|+\frac{\gamma}{m_1} \xi^2|\hat{\theta}_{1t}|^2+\frac{\gamma}{m_2}\xi^2 |\hat{\theta}_{2t}|^2
\end{equation}
\end{lem}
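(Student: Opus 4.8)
The plan is to generate the quartic dissipative terms $\tfrac{k_1\gamma}{m_1}\xi^4|\hat\theta_1|^2$ and $\tfrac{k_2\gamma}{m_2}\xi^4|\hat\theta_2|^2$ directly from the two second-order heat equations \eqref{eq4} and \eqref{eq5}, and then to use the two bilinear corrections carried by $\s$ to absorb the resulting couplings to $(i\xi\hat\omega-l\hat\varphi)_t$ and to $\hat\psi_t$. First I would multiply \eqref{eq4} by $\tfrac{\gamma}{m_1}\xi^2\overline{\hat\theta}_1$, take the real part, and use $Re(\hat\theta_{1tt}\overline{\hat\theta}_1)=\tfrac{d}{dt}Re(\hat\theta_{1t}\overline{\hat\theta}_1)-|\hat\theta_{1t}|^2$ to get
\[
\frac{d}{dt}\Big(\tfrac{\gamma}{m_1}\xi^2 Re(\hat\theta_{1t}\overline{\hat\theta}_1)\Big)+\tfrac{k_1\gamma}{m_1}\xi^4|\hat\theta_1|^2+\tfrac{\alpha_1\gamma}{m_1}\xi^4 Re(\hat\theta_{1t}\overline{\hat\theta}_1)=\tfrac{\gamma}{m_1}\xi^2|\hat\theta_{1t}|^2-\gamma\xi^2 Re\big((i\xi\hat\omega-l\hat\varphi)_t\,\overline{\hat\theta}_1\big);
\]
multiplying \eqref{eq5} by $\tfrac{\gamma}{m_2}\xi^2\overline{\hat\theta}_2$ gives the same identity with $(k_2,m_2,\alpha_2)$ and with coupling term $-\gamma\,Re\big(i\xi^3\hat\psi_t\overline{\hat\theta}_2\big)$ on the right.

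The second step is the delicate one: the $\alpha_1$-damping contribution is a perfect time derivative, $\tfrac{\alpha_1\gamma}{m_1}\xi^4 Re(\hat\theta_{1t}\overline{\hat\theta}_1)=\tfrac{d}{dt}\big(\tfrac{\alpha_1\gamma}{2m_1}\xi^4|\hat\theta_1|^2\big)$, and likewise for the index $2$; this is exactly the quartic block built into $\s$, so these terms are moved onto the derivative side and absorbed into $\tfrac{d}{dt}\s$. The point requiring care is that they carry the same frequency weight $\xi^4$ as the dissipation we are constructing, hence they cannot be discarded by Young's inequality to the right-hand side — they must be identified as exact derivatives and hidden in the functional.

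Third, I would cancel the two coupling terms against the derivatives of the bilinear corrections in $\s$. Since
\[
\frac{d}{dt}\Big(\gamma\xi^2 Re\big(\overline{\hat\theta}_1(i\xi\hat\omega-l\hat\varphi)\big)\Big)=\gamma\xi^2 Re\big(\overline{\hat\theta}_{1t}(i\xi\hat\omega-l\hat\varphi)\big)+\gamma\xi^2 Re\big(\overline{\hat\theta}_1(i\xi\hat\omega-l\hat\varphi)_t\big),
\]
the last summand cancels the $(i\xi\hat\omega-l\hat\varphi)_t$-coupling from \eqref{eq4}, while the first one is bounded, after taking moduli, by $\gamma\xi^2|\hat\theta_{1t}|\,|i\xi\hat\omega-l\hat\varphi|$; in the same way $\tfrac{d}{dt}\big(\gamma\,Re(i\xi^3\hat\psi\overline{\hat\theta}_2)\big)$ removes the $\hat\psi_t$-coupling from \eqref{eq5} and leaves $\gamma|\xi|^3|\hat\psi|\,|\hat\theta_{2t}|$. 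Adding the four identities, the left-hand side collapses to exactly $\tfrac{d}{dt}\s+\tfrac{k_1\gamma}{m_1}\xi^4|\hat\theta_1|^2+\tfrac{k_2\gamma}{m_2}\xi^4|\hat\theta_2|^2$, and the right-hand side is $\tfrac{\gamma}{m_1}\xi^2|\hat\theta_{1t}|^2+\tfrac{\gamma}{m_2}\xi^2|\hat\theta_{2t}|^2$ together with the two bilinear bounds just obtained, which is precisely \eqref{eqq1}. Beyond linear bookkeeping of the conjugations and frequency powers there is no essential obstacle; the only genuinely sensitive point is the second step, i.e. keeping the $\alpha_j\xi^2\hat\theta_{jt}$ terms on the derivative side rather than trying to estimate them.
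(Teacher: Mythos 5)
Your proposal is correct and follows exactly the paper's own route: multiply \eqref{eq4} by $\frac{\gamma}{m_1}\xi^2\overline{\hat{\theta}}_1$ and \eqref{eq5} by $\frac{\gamma}{m_2}\xi^2\overline{\hat{\theta}}_2$, recognize the $\alpha_j$-terms and the couplings to $(i\xi\hat{\omega}-l\hat{\varphi})_t$ and $\hat{\psi}_t$ as exact time derivatives absorbed into $\s$, and bound the two leftover bilinear terms by their moduli. One remark: your (correct) computation produces $\frac{\alpha_j\gamma}{2m_j}\xi^4|\hat{\theta}_j|^2$ in the quartic block, which shows that the $|\hat{\theta}_{jt}|^2$ appearing there in the stated definition of $\s$ and in the paper's displayed identity \eqref{eqq2} is a typo for $|\hat{\theta}_j|^2$.
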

\begin{proof}
Multiplying (\ref{eq4}) by $\frac{\gamma}{m_1}\xi^2 \overline{\hat{\theta}}_1$ and taking real part, we obtain 
\begin{align}\label{eqq2}
\frac{d}{dt}\left\lbrace \frac{\gamma}{m_1}Re\left(\xi^2\hat{\theta}_{1t}\overline{\hat{\theta}}_{1}\right) + \frac{\alpha_1\gamma}{2m_1}\xi^4|\hat{\theta}_{1t}|^2    + \gamma Re\left( \xi^2\overline{\hat{\theta}}_1\left(i \xi \hat{\omega} - l\hat{\varphi}\right)\right)\right\rbrace
&+k_1\frac{\gamma}{m_1}\xi^4|\hat{\theta}_1|^2 \notag \\
&\leq  \frac{\gamma}{m_1}\xi^2|\hat{\theta}_{1t}|^2 +\gamma\xi^2|\hat{\theta}_{1t}||i \xi \hat{\omega} - l\hat{\varphi}|.
\end{align}
Moreover, multiplying (\ref{eq5}) by $\frac{\gamma}{m_2}\xi^2 \overline{\hat{\theta}}_2$ and taking real part, 
\begin{align}\label{eqq3}
\frac{d}{dt}\left\lbrace \frac{\gamma}{m_2}Re\left(\xi^2\hat{\theta}_{2t}\overline{\hat{\theta}}_{2}\right)  + \frac{\alpha_2\gamma}{2m_2}\xi^4| \hat{\theta}_{2t}|^2+ \gamma Re\left( i\xi^3\hat{\psi}\overline{\hat{\theta}}_2\right)\right\rbrace +k_2\frac{\gamma}{m_2}\xi^4|\hat{\theta}_2|^2 \leq  \frac{\gamma}{m_2}\xi^2|\hat{\theta}_{2t}|^2+\gamma |\xi|^3|\hat{\psi}||\hat{\theta}_{2t}|. 
\end{align}
Adding (\ref{eqq2}) and (\ref{eqq3}), we obtain (\ref{eqq1})
\end{proof}
Now, Consider the functional
\begin{equation}
\LL_2(\xi,t) = \left\lbrace\begin{tabular}{l l}
$\J_1(\xi,t)+\varepsilon_2 \xi^2 \K(\xi,t)+\xi^2\J_2(\xi,t)+\varepsilon_3\xi^2\h(\xi,t)+\s(\xi,t)$, & if $\frac{\rho_1}{\rho_2} =\frac{k}{b}$ and $k=k_0$, \\
$\frac{\xi^2}{(1+\xi^2+\xi^4)}\left( \lambda_1  \varepsilon_3 \J_1+ \frac{1}{(1+\xi^2+\xi^4)}\left(\varepsilon_3\lambda_2  \xi^2\K + \xi^2\J_2+\varepsilon_3\xi^2\h +\s \right) \right)$, & if $\frac{\rho_1}{\rho_2} \neq \frac{k}{b}$ or $k\neq k_0$.
\end{tabular}\right.
\end{equation}
where $\lambda_1, \lambda_2, \varepsilon_2, \varepsilon_3$ are positive constants to be fixed later.

\begin{prop}\label{lem16}
There exist constants $M, M' > 0$ such that  if $\frac{\rho_1}{\rho_2} =\frac{k}{b}$ and $k=k_0$, then
\begin{align}\label{eq24}
\frac{d}{dt}\LL_2(\xi,t) &+M\xi^2 \left\lbrace b \xi^2|\hat{\psi}|^2 + k|i\xi \hat{\varphi} - \hat{\psi} -l \hat{\omega }|^2 +\rho_2|\hat{\psi}_t|^2+k_0|i\xi \hat{\omega } - l \hat{\varphi}|^2 + \rho_1|\hat{\varphi}_t|^2+\rho_1|\hat{\omega}_t|^2\right. \notag \\
&\left.+\frac{k_1\gamma}{m_1}\xi^2|\hat{\theta}_{1}|^2 +\frac{k_2\gamma}{m_2}\xi^2|\hat{\theta}_{2}|^2 \right\rbrace \notag \\
&\leq C(\varepsilon_1,\varepsilon_2,\varepsilon_3)(1+\xi^2+\xi^4+\xi^6)\xi^2|\hat{\theta}_{1t}|^2 +C(\varepsilon_1,\varepsilon_2)(1+\xi^2+\xi^4)|\hat{\theta}_{2t}|^2 
\end{align}
Moreover, if $\frac{\rho_1}{\rho_2} \neq \frac{k}{b}$ or $k\neq k_0$, we obtain
\begin{align}\label{eq24'}
\frac{d}{dt}\LL_2(\xi,t) &+M'\frac{\xi^4}{(1+\xi^2+\xi^4)^2} \left\lbrace b \xi^2|\hat{\psi}|^2 + k|i\xi \hat{\varphi} - \hat{\psi} -l \hat{\omega }|^2 +\rho_2|\hat{\psi}_t|^2+k_0|i\xi \hat{\omega } - l \hat{\varphi}|^2 + \rho_1|\hat{\varphi}_t|^2+\rho_1|\hat{\omega}_t|^2\right. \notag \\
&\left.+\frac{k_1\gamma}{m_1}\xi^2|\hat{\theta}_{1}|^2 +\frac{k_2\gamma}{m_2}\xi^2|\hat{\theta}_{2}|^2 \right\rbrace  \notag \\
&\leq C(\varepsilon_1,\varepsilon_3,\lambda_1,\lambda_2)(1+\xi^2)\xi^2\left(|\hat{\theta}_1|^2 +|\hat{\theta}_2|^2 \right).
\end{align}
\end{prop}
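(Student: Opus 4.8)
The plan is to follow the same strategy as in the proof of Proposition \ref{lem8}, the only structural novelty being that in the type III system the energy identity \eqref{eq7} controls $\xi^2|\hat\theta_{1t}|^2$ and $\xi^2|\hat\theta_{2t}|^2$ but not $\xi^2|\hat\theta_1|^2$, $\xi^2|\hat\theta_2|^2$; the functional $\s$ of Lemma \ref{lem15} is designed precisely to restore the dissipation of $\xi^4|\hat\theta_1|^2$ and $\xi^4|\hat\theta_2|^2$ that must appear inside the braces of \eqref{eq24}--\eqref{eq24'}.

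For the case $\frac{\rho_1}{\rho_2}=\frac{k}{b}$, $k=k_0$ I would work in three blocks. First, form $\J_1+\varepsilon_2\xi^2\K$ and apply Lemmas \ref{lem9} and \ref{lem13}; Young's inequality sends the cross terms $|\xi|^3|\hat\psi||\hat\theta_{2t}|$ and $|\xi||\hat\theta_{2t}||i\xi\hat\varphi-\hat\psi-l\hat\omega|$ into the dissipative quantities $\xi^4|\hat\psi|^2$ and $\xi^2|i\xi\hat\varphi-\hat\psi-l\hat\omega|^2$, so for $\varepsilon_2$ small this block produces dissipation of $\xi^4|\hat\psi|^2$, $\xi^2|i\xi\hat\varphi-\hat\psi-l\hat\omega|^2$ and $\xi^2|\hat\psi_t|^2$. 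Second, form $\xi^2\J_2+\varepsilon_3\xi^2\h$ and apply Lemmas \ref{lem10} and \ref{lem14}; this yields dissipation of $\xi^2|i\xi\hat\omega-l\hat\varphi|^2$, $\xi^2|\hat\varphi_t|^2$ and $\xi^2|\hat\omega_t|^2$, at the cost of a harmless $C\varepsilon_3\xi^2|\hat\psi_t|^2$ (absorbed once $\varepsilon_3\ll\varepsilon_2$) and of cross terms such as $|\xi|^5|\hat\omega_t||\hat\theta_{1t}|$, which after Young's inequality contribute to the right-hand side an amount bounded by $(1+\xi^2+\xi^4+\xi^6)\xi^2|\hat\theta_{1t}|^2$ --- the top power $\xi^8$ arising exactly from pairing $|\xi|^5|\hat\omega_t||\hat\theta_{1t}|$ against $\xi^2|\hat\omega_t|^2$. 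Third, add $\s$ and invoke Lemma \ref{lem15}: its cross terms $\gamma\xi^2|\hat\theta_{1t}||i\xi\hat\omega-l\hat\varphi|$ and $\gamma|\xi|^3|\hat\psi||\hat\theta_{2t}|$ go into the $\xi^2|i\xi\hat\omega-l\hat\varphi|^2$ and $\xi^4|\hat\psi|^2$ dissipation already in hand, and the $\xi^4(|\hat\theta_1|^2+|\hat\theta_2|^2)$ dissipation supplied by $\s$ furnishes the remaining two terms inside the braces, up to extra $\xi^2(|\hat\theta_{1t}|^2+|\hat\theta_{2t}|^2)$ on the right. Summing the three blocks and fixing the constants in the order $\varepsilon_1<\frac{\rho_2l^2}{2\rho_1}$, then $\varepsilon_2$, then $\varepsilon_3$, so that every dissipative coefficient stays positive, gives \eqref{eq24} with $\hat\theta_{1t},\hat\theta_{2t}$ on the right-hand side.

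For the case $\frac{\rho_1}{\rho_2}\neq\frac{k}{b}$ or $k\neq k_0$ I would split $\LL_2=P_1+P_2+P_3$ with $P_1=\frac{\lambda_1\varepsilon_3\xi^2}{1+\xi^2+\xi^4}\J_1+\frac{\lambda_2\varepsilon_3\xi^4}{(1+\xi^2+\xi^4)^2}\K$, $P_2=\frac{\xi^4}{(1+\xi^2+\xi^4)^2}(\J_2+\varepsilon_3\h)$ and $P_3=\frac{\xi^2}{(1+\xi^2+\xi^4)^2}\s$, estimating $\tfrac{d}{dt}P_1$ from Lemmas \ref{lem9} and \ref{lem13}, $\tfrac{d}{dt}P_2$ from \eqref{eq43} and Lemma \ref{lem14}, and $\tfrac{d}{dt}P_3$ from Lemma \ref{lem15}, and using the elementary bounds \eqref{ert} throughout. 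The purpose of the two distinct scales $\frac{\xi^2}{1+\xi^2+\xi^4}$ and $\frac{\xi^4}{(1+\xi^2+\xi^4)^2}$ is to tame the speed-mismatch cross terms $(\rho_2-\frac{b\rho_1}{k})Re(i\xi\hat\psi_t\overline{\hat\varphi_t})$ and $(\frac{\rho_2}{\rho_1}-\frac{b}{k})k_0l\,Re(i\xi\hat\psi\,\overline{(i\xi\hat\omega-l\hat\varphi)})$ of Lemma \ref{lem13}: after multiplication by $\frac{\lambda_2\varepsilon_3\xi^4}{(1+\xi^2+\xi^4)^2}$ they are controlled, using the gain $\frac{\xi^2}{1+\xi^2+\xi^4}\le1$, by the $\xi^2|\hat\psi_t|^2$ dissipation from $P_1$ and the $|\hat\varphi_t|^2$, $|i\xi\hat\omega-l\hat\varphi|^2$ dissipation from $P_2$, once $\lambda_1,\lambda_2$ are large and an auxiliary Young parameter $\varepsilon_4$ together with $\varepsilon_3$ are small, fixed in the order $\varepsilon_1<\frac{\rho_2l^2}{3\rho_1}$, $\lambda_2$, $\varepsilon_4$, $\lambda_1$, $\varepsilon_3$. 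The block $P_3$ then adds $\frac{\xi^6}{(1+\xi^2+\xi^4)^2}(|\hat\theta_1|^2+|\hat\theta_2|^2)$ to the dissipation at the price of $\frac{\xi^4}{(1+\xi^2+\xi^4)^2}(|\hat\theta_{1t}|^2+|\hat\theta_{2t}|^2)\le(1+\xi^2)\xi^2(|\hat\theta_{1t}|^2+|\hat\theta_{2t}|^2)$ plus cross terms absorbed as before, and \eqref{ert} converts the outcome into \eqref{eq24'}.

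The hard part is not any individual estimate but the bookkeeping: one must track the precise power of $\xi$ produced each time Young's inequality splits a cross term and check that it never overshoots the weights $(1+\xi^2+\xi^4+\xi^6)\xi^2$ and $(1+\xi^2+\xi^4)$ (equal speeds) or $(1+\xi^2)\xi^2$ (different speeds) allowed on the right-hand side, all while keeping the cascade of smallness and largeness conditions on $\varepsilon_1,\varepsilon_2,\varepsilon_3,\varepsilon_4,\lambda_1,\lambda_2$ simultaneously satisfiable.
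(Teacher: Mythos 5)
Your proposal follows essentially the same route as the paper: combine $\J_1+\varepsilon_2\xi^2\K+\xi^2\J_2+\varepsilon_3\xi^2\h$ (resp.\ the weighted blocks $P_1,P_2$) via Lemmas \ref{lem9}, \ref{lem13}, \ref{lem10} (using \eqref{eq43} in the unequal-speed case) and \ref{lem14}, then add $\s$ with the appropriate weight and invoke Lemma \ref{lem15} to recover the $\xi^4|\hat{\theta}_1|^2$, $\xi^4|\hat{\theta}_2|^2$ dissipation, fixing the constants in the same order. The bookkeeping of powers of $\xi$ and the role you assign to $\s$ match the paper's argument, so the proposal is correct.
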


\begin{proof}
We can prove (\ref{eq24}) and (\ref{eq24'}) following the ideas used on the proof of Proposition \ref{lem8}, thus we omit some details. First, we suppose that $\frac{\rho_1}{\rho_2} =\frac{k}{b}$ and $k=k_0$. By Lemmas $\ref{lem9}$, $\ref{lem13}$, $\ref{lem10}$ and $\ref{lem14}$, it follows that
\begin{align*}
\frac{d}{dt}\left\lbrace\LL_2(\xi,t)-\s(\xi,t)\right\rbrace &+\left(\frac{\rho_2l^2}{\rho_1}-2\varepsilon_1\right)\varepsilon_2b \xi^4|\hat{\psi}|^2 + \left(\frac{\varepsilon_2}{4}-\frac{3l}{2}\varepsilon_3\right)k\xi^2|i\xi \hat{\varphi} - \hat{\psi} -l \hat{\omega }|^2 \\
&+\left(\frac{m}{2}-s_1\varepsilon_2-\frac{k}{2bl}\varepsilon_3\right)\rho_2\xi^2|\hat{\psi}_t|^2 +\left(\delta-\frac{3l}{2}\varepsilon_3\right)k_0\xi^2|i\xi \hat{\omega } - l \hat{\varphi}|^2 \\
&+ \frac{\rho_1l}{2}\varepsilon_3 |\xi|^2|\hat{\varphi}_t|^2+\frac{\rho_1l }{4}\varepsilon_3|\xi|^2|\hat{\omega}_t|^2 \notag \\
&\leq C(\varepsilon_1,\varepsilon_2,\varepsilon_3)(1+\xi^2+\xi^4+\xi^6)\xi^2|\hat{\theta}_{1t}|^2 +C(\varepsilon_1,\varepsilon_2)(1+\xi^2+\xi^4)|\hat{\theta}_{2t}|^2. 
\end{align*}
Adding $\s(\xi,t)$ in the above inequality, applying Lemma \ref{lem15} and Young inequality, we obtain
\begin{align*}
\frac{d}{dt}\LL_2(\xi,t) &+\left(\frac{\rho_2l^2}{\rho_1}-3\varepsilon_1\right)\varepsilon_2b \xi^4|\hat{\psi}|^2 + \left(\frac{\varepsilon_2}{4}-\frac{3l}{2}\varepsilon_3\right)k\xi^2|i\xi \hat{\varphi} - \hat{\psi} -l \hat{\omega }|^2 \\
&+\left(\frac{m}{2}-s_1\varepsilon_2-\frac{k}{2bl}\varepsilon_3\right)\rho_2\xi^2|\hat{\psi}_t|^2 +\left(\delta-2l\varepsilon_3\right)k_0\xi^2|i\xi \hat{\omega } - l \hat{\varphi}|^2 \\
&+ \frac{\rho_1l}{2}\varepsilon_3 |\xi|^2|\hat{\varphi}_t|^2+\frac{\rho_1l }{4}\varepsilon_3|\xi|^2|\hat{\omega}_t|^2+\frac{k_1\gamma}{m_1}\xi^4|\hat{\theta}_{1}|^2+\frac{k_2\gamma}{m_2}\xi^4|\hat{\theta}_{2}|^2 \\
&\leq C(\varepsilon_1,\varepsilon_2,\varepsilon_3)(1+\xi^2+\xi^4+\xi^6)\xi^2|\hat{\theta}_{1t}|^2 +C(\varepsilon_1,\varepsilon_2)(1+\xi^2+\xi^4)|\hat{\theta}_{2t}|^2. 
\end{align*}
We choose our constants as follows:
\[
\varepsilon_1 < \frac{\rho_2 l^2}{3 \rho_1}, \quad
\varepsilon_2 < \frac{m_2}{2s_1} \quad \text{and} \quad
\varepsilon_3 < \min\left\lbrace \frac{\delta}{2l}, \frac{\varepsilon_2}{6l},\frac{2bl}{k}\left(\frac{m_2}{2}-s_1\varepsilon_2\right)\right\rbrace
\]
Consequently, we deduce that there exist $M>0$, such that (\ref{eq24}) holds. \\

Second, we assume that  $\frac{\rho_1}{\rho_2} \neq \frac{k}{b}$ and $k\neq k_0$. By Lemmas $\ref{lem9}$, $\ref{lem13}$, the estimate (\ref{eq43}) in Lemma $\ref{lem10}$, Lemma  $\ref{lem14}$, adding these inequalities and by using Young inequality, we obtain
\begin{align*}
\frac{d}{dt}&\left\lbrace \LL_2(\xi,t)- \frac{\xi^2\s(\xi,t)}{(1+\xi^2+\xi^4)^2}\right\rbrace+\left(\frac{\rho_2l^2}{\rho_1}-3\varepsilon_1\right)\frac{\lambda_2\varepsilon_3b\xi^6}{(1+\xi^2+\xi^4)^2}|\hat{\psi}|^2  +\frac{\rho_1 \varepsilon_3l\xi^4}{4(1+\xi^2+\xi^4)^2}|\hat{\omega}_t|^2  \\
&+\left(\frac{\lambda_1m_2}{2}-C(\varepsilon_4,\lambda_2)-\frac{\rho_1}{\rho_2}\right)\frac{\rho_2\varepsilon_3\xi^4}{(1+\xi^2+\xi^4)}|\hat{\psi}_t|^2 + \left(\frac{\lambda_2}{4}-C(k,k_0)\right)\frac{\varepsilon_3k\xi^4}{(1+\xi^2+\xi^4)^2}|i\xi \hat{\varphi} - \hat{\psi} -l \hat{\omega }|^2 \\
&+\left(\delta-C(k,k_0,\varepsilon_1,\lambda_2)\varepsilon_3\right)\frac{k_0(1+\xi^2)\xi^4}{(1+\xi^2+\xi^4)^2}|i\xi \hat{\omega } - l \hat{\varphi}|^2 + 
\left(\frac{l}{2}-\frac{\varepsilon_4\lambda_2}{\rho_1}\right)\frac{\rho_1\varepsilon_3\xi^4}{(1+\xi^2+\xi^4)^2}|\hat{\varphi}_t|^2\notag \\
&\leq C(\varepsilon_1,\varepsilon_3,\lambda_1,\lambda_2)(1+\xi^2)\xi^2\left(|\hat{\theta}_{1t}|^2 +|\hat{\theta}_{2t}|^2\right).
\end{align*}
In the last estimate, we used also the inequalities (\ref{ert}). Adding $\frac{\xi^2}{(1+\xi^2+\xi^4)^2}\s(\xi,t)$ in the above inequality, applying Lemma \ref{lem15} and Young inequality, it follows that 
\begin{align*}
\frac{d}{dt}\LL_2(\xi,t)&+\left(\frac{\rho_2l^2}{\rho_1}-4\varepsilon_1\right)\frac{\lambda_2\varepsilon_3b\xi^6}{(1+\xi^2+\xi^4)^2}|\hat{\psi}|^2 + \left(\frac{\lambda_2}{4}-C(k,k_0)\right)\frac{\varepsilon_3k\xi^4}{(1+\xi^2+\xi^4)^2}|i\xi \hat{\varphi} - \hat{\psi} -l \hat{\omega }|^2  \\
&+\left(\frac{\lambda_1m_2}{2}-C(\varepsilon_4,\lambda_2)-\frac{\rho_1}{\rho_2}\right)\frac{\rho_2\varepsilon_3\xi^4}{(1+\xi^2+\xi^4)}|\hat{\psi}_t|^2  +\frac{\rho_1l \varepsilon_3\xi^4}{4(1+\xi^2+\xi^4)^2}|\hat{\omega}_t|^2 \notag\\
&+\left(\delta-C(k,k_0,\varepsilon_1,\lambda_2)\varepsilon_3\right)\frac{k_0(1+\xi^2)\xi^4}{(1+\xi^2+\xi^4)^2}|i\xi \hat{\omega } - l \hat{\varphi}|^2 + 
\left(\frac{l}{2}-\frac{\varepsilon_4\lambda_2}{\rho_1}\right)\frac{\rho_1\varepsilon_2\xi^4}{(1+\xi^2+\xi^4)^2}|\hat{\varphi}_t|^2\notag \\
&+\frac{k_1\gamma}{m_1}\frac{\xi^6}{(1+\xi^2+\xi^4)^2}|\hat{\theta}_{1}|^2+\frac{k_2\gamma}{m_2}\frac{\xi^6}{(1+\xi^2+\xi^4)^2}|\hat{\theta}_{2}|^2 \notag\\
&\leq  C(\varepsilon_1,\varepsilon_3,\lambda_1,\lambda_2)(1+\xi^2)\xi^2\left(|\hat{\theta}_{1t}|^2 +|\hat{\theta}_{2t}|^2\right).
\end{align*}
We choose our constants as follows: 
\[
\varepsilon_1 < \frac{\rho_2 l^2}{4\rho_1}, \quad
\lambda_2 > 4C(k,k_0), \quad
\varepsilon_4 < \frac{\rho_1 l}{2\lambda_2}, \quad
\lambda_1 > \frac{2(\rho_1+C(\varepsilon_4,\lambda_2)\rho_2) }{m_2 \rho_2} \quad \text{and} \quad
\varepsilon_3 < \frac{\delta}{C(k,k_0,\varepsilon_2,\lambda_2)}.
\]
Consequently, by using (\ref{ert}), we deduce that there exist $M'>0$, such that (\ref{eq24'}) holds.
\end{proof}

\begin{thm}\label{teo1'}
For any $t \geq 0$ and $\xi \in \R$, we obtain the following decay rates,
\begin{equation}\label{eq27}
\hat{\E}(\xi,t) \leq \left\lbrace\begin{tabular}{l l}
$C \hat{\E}(\xi,0) e^{-\beta s_1(\xi)t}$, & if $\frac{\rho_1}{\rho_2} =\frac{k}{b}$ and $k=k_0$, \\
$C' \hat{\E}(\xi,0) e^{-\beta' s_2(\xi)t}$, & if $\frac{\rho_1}{\rho_2} \neq\frac{k}{b}$ or $k\neq k_0,$
\end{tabular}\right.
\end{equation}
where $C, \beta, C',\beta'$ are positive constants and 
\begin{equation*}
s_1(\xi)=\frac{\xi^4}{\left(1+\xi^2+\xi^4+\xi^6+\xi^4\right)}, \quad s_2(\xi)=\frac{\xi^4}{\left(1+\xi^2\right)\left(1+\xi^2+\xi^4\right)^2}.
\end{equation*}

\end{thm}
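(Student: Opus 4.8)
The plan is to follow, essentially line for line, the proof of Theorem~\ref{teo1}, under the dictionary $\LL_1\leftrightarrow\LL_2$, $\hat{E}\leftrightarrow\hat{\E}$, Lemma~\ref{lem1}$\leftrightarrow$Lemma~\ref{lemq1} and Proposition~\ref{lem8}$\leftrightarrow$Proposition~\ref{lem16}. The only genuinely new feature is that the energy identity~\eqref{eq7} now dissipates $\xi^2|\hat{\theta}_{1t}|^2$ and $\xi^2|\hat{\theta}_{2t}|^2$ instead of $\xi^2|\hat{\theta}_1|^2,\xi^2|\hat{\theta}_2|^2$; this matches precisely with Proposition~\ref{lem16}, whose left-hand sides already carry the dissipative terms $\xi^2|\hat{\theta}_1|^2,\xi^2|\hat{\theta}_2|^2$ (produced by the auxiliary functional $\s$ via Lemma~\ref{lem15}), while the error terms on their right-hand sides involve only polynomially weighted copies of $|\hat{\theta}_{1t}|^2,|\hat{\theta}_{2t}|^2$, which will be absorbed by a large multiple of~\eqref{eq7}. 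Put $\tilde\eta:=\min\{\alpha_1/m_1,\alpha_2/m_2\}$, so that~\eqref{eq7} gives $\tfrac{d}{dt}\hat{\E}\le-2\gamma\tilde\eta\,\xi^2\bigl(|\hat{\theta}_{1t}|^2+|\hat{\theta}_{2t}|^2\bigr)$.

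In the case $\frac{\rho_1}{\rho_2}=\frac{k}{b}$ and $k=k_0$, I would use the Lyapunov functional
\[
\LL(\xi,t)=\xi^2\LL_2(\xi,t)+N\bigl(1+\xi^2+\xi^4+\xi^6+\xi^8\bigr)\hat{\E}(\xi,t),
\]
in analogy with~\eqref{f1}. Multiplying~\eqref{eq24} by $\xi^2$ and adding $N(1+\xi^2+\xi^4+\xi^6+\xi^8)$ times~\eqref{eq7}, the two error terms coming from~\eqref{eq24} are dominated once $N$ is taken larger than a constant depending on $\tilde\eta$ and on the constants of Proposition~\ref{lem16}, because $(1+\xi^2+\xi^4+\xi^6)\xi^4\le(1+\xi^2+\xi^4+\xi^6+\xi^8)\xi^2$ and $(1+\xi^2+\xi^4)\xi^2\le(1+\xi^2+\xi^4+\xi^6+\xi^8)\xi^2$. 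Using moreover $(1+\xi^2+\xi^4+\xi^6+\xi^8)\xi^2\ge\xi^4$, the surviving negative terms dominate $\xi^4\hat{\E}(\xi,t)$: the brace in~\eqref{eq24} controls every component of $\hat{\E}$ except $|\hat{\theta}_{1t}|^2,|\hat{\theta}_{2t}|^2$, which are in turn controlled by the surplus of the energy dissipation. This gives $\tfrac{d}{dt}\LL(\xi,t)\le-M_2\,\xi^4\hat{\E}(\xi,t)$. Since Young's inequality yields $|\xi^2\LL_2(\xi,t)|\le M_1(1+\xi^2+\xi^4+\xi^6)\hat{\E}(\xi,t)$, enlarging $N>M_1$ produces the equivalence (compare~\eqref{e29})
\[
(N-M_1)(1+\xi^2+\xi^4+\xi^6+\xi^8)\hat{\E}(\xi,t)\le\LL(\xi,t)\le(N+M_1)(1+\xi^2+\xi^4+\xi^6+\xi^8)\hat{\E}(\xi,t),
\]
whence $\tfrac{d}{dt}\LL\le-\beta\,s_1(\xi)\LL$ with $\beta=M_2/(N+M_1)$; Gronwall's inequality together with this equivalence gives $\hat{\E}(\xi,t)\le C\,\hat{\E}(\xi,0)e^{-\beta s_1(\xi)t}$ with $C=(N+M_1)/(N-M_1)$.

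In the case $\frac{\rho_1}{\rho_2}\neq\frac{k}{b}$ or $k\neq k_0$, I would instead take $\LL(\xi,t)=\LL_2(\xi,t)+N'(1+\xi^2)\hat{\E}(\xi,t)$ and argue in the same way from~\eqref{eq24'} and~\eqref{eq7}: the error term on the right of~\eqref{eq24'} is absorbed into $2\gamma N'\tilde\eta(1+\xi^2)\xi^2\bigl(|\hat{\theta}_{1t}|^2+|\hat{\theta}_{2t}|^2\bigr)$ for $N'$ large, and since $(1+\xi^2)\xi^2\ge\xi^4/(1+\xi^2+\xi^4)^2$ one gets $\tfrac{d}{dt}\LL\le-M_2'\,\tfrac{\xi^4}{(1+\xi^2+\xi^4)^2}\hat{\E}$. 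The equivalence $(N'-M_1')(1+\xi^2)\hat{\E}\le\LL\le(N'+M_1')(1+\xi^2)\hat{\E}$, again valid for $N'$ large by Young's inequality, then converts this into $\tfrac{d}{dt}\LL\le-\beta' s_2(\xi)\LL$ with $\beta'=M_2'/(N'+M_1')$, and Gronwall's inequality closes the estimate with $C'=(N'+M_1')/(N'-M_1')$.

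The only delicate step is the polynomial bookkeeping: one must check that the polynomial weight multiplying $\hat{\E}$ inside $\LL$ is at the same time (i) large enough to dominate every polynomial factor appearing in front of $|\hat{\theta}_{1t}|^2,|\hat{\theta}_{2t}|^2$ on the right of~\eqref{eq24} and~\eqref{eq24'}, and (ii) small enough that the quadratic-form equivalence between $\LL$ and $(\text{weight})\cdot\hat{\E}$ still holds. The choices $1+\xi^2+\xi^4+\xi^6+\xi^8$ and $1+\xi^2$ do the job, exactly as for Theorem~\ref{teo1}, but one has to track the inequalities with some care because the functional $\s(\xi,t)$ contributes a $\xi^4$-weighted term to $\LL_2$.
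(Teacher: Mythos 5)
Your proposal is correct and is exactly the argument the paper intends: its proof of Theorem~\ref{teo1'} simply invokes Proposition~\ref{lem16} and says to repeat the proof of Theorem~\ref{teo1}, which is precisely the dictionary you carry out (including the correct observation that the $\xi^2|\hat{\theta}_j|^2$ components of $\hat{\E}$ are now supplied by the brace of Proposition~\ref{lem16} via the functional $\s$, while the $|\hat{\theta}_{jt}|^2$ components are absorbed by the dissipation in~\eqref{eq7}). You have merely written out the bookkeeping the paper omits.
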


\begin{proof}
We prove (\ref{eq27}), by Proposition \ref{lem16} and using the same approach done in the proof of Theorem \ref{teo1}, Thus, we omit the details.
\end{proof}

\section{The main result}
In this section, we establish decay estimates of the solutions to the systems (\ref{ee1}) and (\ref{ee2}). For Bresse system \eqref{ee1}, thermoelasticity of Type II, we consider the vector solution:
\begin{equation}\label{vectorsolution1}
V_1:=\left( \rho_1^{\frac{1}{2}}\varphi_{t},\rho_2^{\frac{1}{2}}\psi_{t}, \rho_1^{\frac{1}{2}}\omega_{t},\left(\frac{\gamma}{m_1}\right)^{\frac{1}{2}}\theta_1, \left(\frac{\gamma}{m_2}\right)^{\frac{1}{2}}\theta_2,b^{\frac{1}{2}}\psi_x,k^{\frac{1}{2}}\left(\varphi_x - \psi_x -l \omega_x\right), k_0^{\frac{1}{2}}\left( \omega_x - l\varphi\right)\right)
\end{equation}
and for  Bresse system \eqref{ee2}, thermoelasticity of type III, 
\begin{multline}\label{vectorsolution2}
V_2:=\left( \rho_1^{\frac{1}{2}}\varphi_{t},\rho_2^{\frac{1}{2}}\psi_{t}, \rho_1^{\frac{1}{2}}\omega_{t},\left(\frac{\gamma}{m_1}\right)^{\frac{1}{2}}\theta_{1t}, \left(\frac{k_1\gamma}{m_1}\right)^{\frac{1}{2}}\theta_{1x}, \left(\frac{\gamma}{m_2}\right)^{\frac{1}{2}}\theta_{2t},\left(\frac{k_2\gamma}{m_2}\right)^{\frac{1}{2}}\theta_{2x},b^{\frac{1}{2}}\psi_x,k^{\frac{1}{2}}\left(\varphi_x - \psi_x -l \omega_x\right), \right. \\
\left. k_0^{\frac{1}{2}}\left( \omega_x - l\varphi\right)\right)
\end{multline}
Note that
\begin{equation}\label{eq45}
\hat{E}(\xi,t)=|\hat{V}_1(\xi,t)|^2,\quad \hat{\E}(\xi,t)=|\hat{V}_2(\xi,t)|^2,
\end{equation}
where $\hat{E}$ and $\hat{\E}$ are defined in \eqref{energytypeII} and \eqref{energytypeIII}, respectively. \\

We are now in position to prove our main result. 

\begin{proof}[\textbf{Proof of Theorem \ref{teo2}}]
Applying the Plancherel identity and (\ref{eq45}), we have
\begin{align*}
\|\partial^k_xV_1(t)\|_{L^2(\R)}^2 = \|(i\xi)^k\hat{V_1}(t)\|_{L^2(\R)}^2 = \int_{\R}|\xi|^{2k}\hat{E}(\xi,t)^2 d\xi,
\end{align*}
and
\begin{align*}
\|\partial^k_xV_2(t)\|_{L^2(\R)}^2 = \|(i\xi)^k\hat{V_2}(t)\|_{L^2(\R)}^2 = \int_{\R}|\xi|^{2k}\hat{\E}(\xi,t)^2 d\xi.
\end{align*}
By Theorems \ref{teo1} and \ref{teo1'}, it yields that  
\begin{align*}
\|\partial^k_xV_j(t)\|_2^2 &\leq C\int_{\R}|\xi|^{2k}e^{-\beta s(\xi)t}\hat{V_j}(0,\xi)^2 d\xi \\
&\leq C\int_{|\xi|\leq 1}|\xi|^{2k}e^{-\beta s_i(\xi)t}\hat{V_j}^2(0,\xi) d\xi + C\int_{|\xi|\geq 1}|\xi|^{2k}e^{-\beta  s_i(\xi)t}\hat{V_j}^2(0,\xi) d\xi\\
&=I_1+I_2, \qquad \qquad (i,j=1,2).
\end{align*}
It is not difficult to see that if $\frac{\rho_1}{\rho_2} =\frac{k}{b}$ and $k=k_0$, the function $s_1(\xi)$  satisfies
\begin{equation}\label{e46}
\left\lbrace\begin{tabular}{l c l}
$s_1(\xi) \geq \frac{1}{5}\xi^4$ & if & $|\xi|\leq 1$ \\
$s_1(\xi) \geq \frac{1}{5}\xi^{-4}$ & if & $|\xi|\geq 1$ 
\end{tabular}\right.
\end{equation} 
Thus, we estimate $I_1$ as follows, 
\begin{equation}
I_1 \leq C \|\hat{V_j^0}\|_{L^\infty}^2 \int_{|\xi|\leq 1}|\xi|^{2k}e^{-\frac{\beta}{5}\xi^4t} d\xi \leq C_1\|\hat{V_j^0}\|_{L^\infty}^2\left( 1+t\right)^{-\frac{1}{4}(1+2k)} \leq C_1\left( 1+t\right)^{-\frac{1}{4}(1+2k)} \|V_j^0\|_{L^1}^2, \quad j=1,2.
\end{equation}
On the other hand, by using the second inequality in (\ref{e46}), we obtain
\begin{align*}
I_2 &\leq C\int_{|\xi|\geq 1}|\xi|^{2k}e^{-\frac{\beta}{5}\xi^{-4}t}\hat{V_j^0}^2(\xi)d\xi \leq C \sup_{|\xi| \geq 1}\{ |\xi |^{-2l}e^{-\frac{\beta}{5} \xi ^{-4}t} \} \int_{\R}|\xi|^{2(k+l)}\hat{V_j^0}^2(\xi)d\xi  \notag \\
&\leq C_2 (1+t)^{-\frac{l}{2}} \|\partial_x^{k+l}V_j^0\|_2^2,\quad j=1,2.
\end{align*}
Combining the estimates of $I_1$ and $I_2$, we obtain $(\ref{e32})$. 
On the other hand, if $\frac{\rho_1}{\rho_2} \neq \frac{k}{b}$ or $k\neq k_0$, the function $s_2(\xi)$ satisfies
\begin{equation}\label{e46'}
\left\lbrace\begin{tabular}{l c l}
$s_2(\xi) \geq \frac{1}{18}\xi^4$ & if & $|\xi|\leq 1$ \\
$s_2(\xi) \geq \frac{1}{18}\xi^{-6}$ & if & $|\xi|\geq 1$ 
\end{tabular}\right.
\end{equation}
Thus, we estimate $I_1$ as following,
\begin{equation*}
I_1 \leq C \|\hat{V_j^0}_0\|_{L^\infty}^2 \int_{|\xi|\leq 1}|\xi|^{2k}e^{-\frac{\beta}{18}\xi^4t} d\xi \leq C_1\|\hat{V_j^0}\|_{L^\infty}^2\left( 1+t\right)^{-\frac{1}{4}(1+2k)} \leq C_1\left( 1+t\right)^{-\frac{1}{4}(1+2k)}\|V_j^0\|_{L^1}^2
\end{equation*}
Moreover, by using the second inequality in (\ref{e46'}), it follows that
\begin{align*}
I_2 &\leq C\int_{|\xi|\geq 1}|\xi|^{2k}e^{-\frac{\beta}{18}\xi^{-6}t}\hat{V_j^0}^2(\xi)d\xi \leq C \sup_{|\xi| \geq 1}\{ |\xi |^{-2l}e^{-\frac{\beta}{18} \xi ^{-6}t} \} \int_{\R}|\xi|^{2(k+l)}\hat{V_j^0}^2(\xi)d\xi  \notag \\
&\leq C_2 (1+t)^{-\frac{l}{3}} \|\partial_x^{k+l}V_j^0\|_2^2
\end{align*}
Combining the estimates of $I_1$ and $I_2$, we obtain $(\ref{e32'})$.
\end{proof}

\end{document}